\newtheorem{theorem}{Theorem}[section]
\newtheorem{lemma}[theorem]{Lemma}
\newtheorem{proposition}[theorem]{Proposition}
\newtheorem{corollary}[theorem]{Corollary}
\theoremstyle{definition}
\newtheorem{definition}[theorem]{Definition}
\newtheorem{example}[theorem]{Example}
\newtheorem{remark}[theorem]{Remark}
\numberwithin{equation}{section}
\DeclareMathOperator*{\esssup}{ess\,sup}
\DeclareMathOperator*{\essinf}{ess\,inf}
\begin{document}

\title[Stability of trigonometric approximation in $L^p$]{Stability of trigonometric approximation in $L^p$ and applications to prediction theory}

\author[M. Frank, L. Klotz]{Michael Frank$^1$ and Lutz Klotz$^2$}

\address{$^1$Hochschule f\"ur Technik, Wirtschaft und Kultur (HTWK) Leipzig, Fakult\"at Informatik und Medien, PF 301166, D-04251 Leipzig, Germany.}
\email{michael.frank@htwk-leipzig.de}

\address{$^2$Universit\"at Leipzig, Fakult\"at f\"ur Mathematik und Informatik, Mathematisches Institut, Augustusplatz 10, D-04109 Leipzig, Germany.} 
\email{lutzklotz@t-online.de}

\subjclass[2010]{60625.}

\keywords{trigonometric approximation in $L^p$; stability; prediction theory.}

\begin{abstract}
Let $\Gamma$ be an LCA group and $(\mu_n)$ be a sequence of bounded regular Borel measures on $\Gamma$ tending to a measure $\mu_0$. Let $G$ be the dual group of $\Gamma$, $S$ be a non-empty subset of $G \setminus \{ 0 \}$, and $[{\mathcal T}(S)]_{\mu_n,p}$ the subspace of $L^p(\mu_n)$, $p \in (0,\infty)$, spanned by the characters of $\Gamma$ which are generated by the elements of $S$. The limit behaviour of the sequence of metric projections of the function $1$ onto $[{\mathcal T}(S)]_{\mu_n,p}$ as well as of the sequence of the corresponding approximation errors are studied. The results are applied to obtain stability theorems for prediction of weakly stationary or harmonizable symmetric $p$-stable stochastic processes. Along with the general problem the particular cases of linear interpolation or extrapolation as well as of a finite or periodic observation set are studied in detail and compared to each other.
\end{abstract}

\maketitle


\section{Introduction}
If a time series is modelled on a weakly stationary process, its prediction is calculated using the spectral measure of the process, which in turn is estimated on the basis of observations. Therefore, similar to the Central Limit Theorem giving a theoretical justification of statistical inference, it would be desirable to prove assertions claiming that minimally differing spectral measures yield only slightly varying prediction. According to \cite{[26]}, cf.~\cite{[28]}, Kolmogorov raised this problem at the VII-th Soviet Conference on Probability Theory and Mathematical Statistics in 1963. Let us formulate it more precisely. 

As usual denote the sets of positive integers, of non-negative integers, of integers, of real and of complex numbers by $\mathbb N$, ${\mathbb N}_0$, $\mathbb Z$, $\mathbb R$ and $\mathbb C$, respectively. Consider a weakly stationary process on $\mathbb R$ with a non-stochastic spectral measure $\mu_0$. Its linear prediction at the point zero on the basis of its observations at points of a set $S \subseteq {\mathbb R}$ is equivalent to the computation of the orthogonal projection $\phi_{\mu_0,2}$ of the function that is identically equal to $1$ onto the subspace of $L^2(\mu_0)$ spanned by the functions ${\rm e}^{{\rm i}x\cdot}$, $x \in S$. In most applications the measue $\mu_0$ is unknown and has to be replaced by its estimate $\mu$ obtained from data, cf.~\cite{[2]}. In fact, one computes the orthogonal projection $\phi_{\mu,2}$ of the function equal to $1$ in $L^2(\mu)$, and one hopes that $\phi_{\mu_0,2}$ and $\phi_{\mu,2}$ differ not too much. Accordingly, four kinds of prediction errors arise: 

(E1)  the theoretical or ''true'' prediction error $d_2(\mu_0)=\int_{\mathbb R} | 1-\phi_{\mu_0,2}|^2 \, d\mu_0$;

(E2)  the estimated error of the estimated prediction $d_2(\mu)=\int_{\mathbb R} | 1-\phi_{\mu,2}|^2 \, d\mu$;

(E3)  the theoretical error of the estimated prediction $d_2(\mu,\mu_0)=\int_{\mathbb R} | 1-\phi_{\mu,2}|^2 \, d\mu_0$;
 
(E4)  the estimated error of the theoretical prediction $d_2(\mu_0,\mu)=\int_{\mathbb R} | 1-\phi_{\mu_0,2}|^2 \, d\mu$.

Note, that additional assumptions are necessary to define (E3) and (E4) correctly, what will be explained in Section 3. The more, the theoretical prediction is not known and, thus, does not play any role in practice. So the error (E4) will be discussed only marginally. 

As the next step we will define what we mean by saying that $\mu_0$ and $\mu$ differ a little. Since both the measures are bounded it is natural to make use of the topology generated by the norm $\| \cdot \|$ on the linear space of $\mathbb C$-valued measures, where for a $\mathbb C$-valued measure $\nu$, the symbol $\|\nu\|$ denotes the total variation. Indeed, soon after A.~N.~Kolmogorov had raised the stability problem, Yu.~A.~Rozanov published the following partial answer in \cite{[28]}. We always denote by $\lim$ the limit of a sequence indexed by the elements of $\mathbb N$ in their natural order.

\begin{theorem} \label{t1.1}
   Let $(\mu_n)$ be a sequence of bounded Borel measures on $\mathbb R$. If $\mu_0 \leq \mu_n$ for all $n \in \mathbb N$ and $\lim \|\mu_0-\mu_n\|=0$, then $\lim d_2(\mu_n)=\lim d_2(\mu_n,\mu_0)= d_2(\mu_0)$. 
\end{theorem}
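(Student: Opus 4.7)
The plan is to use the decomposition $\rho_n := \mu_n - \mu_0$, which is a non-negative bounded measure by $\mu_0 \le \mu_n$ and satisfies $\|\rho_n\|\to 0$. The main technical device is that for any bounded Borel function $f$,
\[ \Big|\int f\, d\mu_n - \int f\, d\mu_0\Big| = \Big|\int f\, d\rho_n\Big| \le \|f\|_\infty \, \|\rho_n\|, \]
so $\int f\, d\mu_n \to \int f\, d\mu_0$ whenever $f$ is bounded (in particular when $f = |1-t|^2$ for a trigonometric polynomial $t$). I would sandwich each of $d_2(\mu_n)$ and $d_2(\mu_n,\mu_0)$ between $d_2(\mu_0)$ from below and a quantity tending to $d_2(\mu_0)$ from above.

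For the upper bound on $d_2(\mu_n)$, fix $\varepsilon>0$ and pick $t\in\mathcal{T}(S)$ with $\int|1-t|^2\,d\mu_0\le d_2(\mu_0)+\varepsilon$; this is possible since $\phi_{\mu_0,2}$ lies in the $L^2(\mu_0)$-closure of $\mathcal{T}(S)$. Because $t\in[\mathcal{T}(S)]_{\mu_n,2}$, minimality of $\phi_{\mu_n,2}$ in $L^2(\mu_n)$ gives $d_2(\mu_n)\le\int|1-t|^2\,d\mu_n$, and the displayed inequality above yields $\int|1-t|^2\,d\mu_n\to\int|1-t|^2\,d\mu_0$. Hence $\limsup_n d_2(\mu_n)\le d_2(\mu_0)+\varepsilon$, and then $\varepsilon\to 0$.

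For the lower bounds I would rely on the central observation that $\mu_0\le\mu_n$ makes the identity a contraction from $L^2(\mu_n)$ to $L^2(\mu_0)$; hence any sequence in $\mathcal{T}(S)$ that converges in $L^2(\mu_n)$ also converges in $L^2(\mu_0)$, giving a natural embedding $[\mathcal{T}(S)]_{\mu_n,2}\hookrightarrow[\mathcal{T}(S)]_{\mu_0,2}$. In particular $\phi_{\mu_n,2}\in[\mathcal{T}(S)]_{\mu_0,2}$, so by optimality of $\phi_{\mu_0,2}$ in $L^2(\mu_0)$ together with $\mu_0\le\mu_n$,
\[ d_2(\mu_0)\;\le\;\int|1-\phi_{\mu_n,2}|^2\,d\mu_0\;=\;d_2(\mu_n,\mu_0)\;\le\;\int|1-\phi_{\mu_n,2}|^2\,d\mu_n\;=\;d_2(\mu_n). \]
Together with the upper bound from the previous paragraph this squeezes both $d_2(\mu_n)$ and $d_2(\mu_n,\mu_0)$ to $d_2(\mu_0)$.

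The only nontrivial point is the last observation: without the one-sided domination $\mu_0\le\mu_n$, $\phi_{\mu_n,2}$ could not automatically be regarded as an element of the minimising subspace in $L^2(\mu_0)$, and the lower bound would require a genuinely more delicate argument (for instance, weak compactness and identification of accumulation points of $(\phi_{\mu_n,2})$). The hypothesis $\mu_0\le\mu_n$ is precisely what reduces the entire proof to a short sandwich.
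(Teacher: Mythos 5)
Your argument is correct and is essentially the route the paper itself takes: Theorem~\ref{t1.1} is quoted from Rozanov without proof, but its amplification Theorem~\ref{t3.10} (via Theorem~\ref{t3.1} and Corollary~\ref{c3.3}(ii)) uses exactly your sandwich --- the upper bound $\limsup d_2(\mu_n)\le d_2(\mu_0)$ from testing with a near-optimal bounded trigonometric polynomial, and the lower bound from the observation that $\mu_0\le\mu_n$ forces $\phi_{\mu_n,2}\in[{\mathcal T}(S)]_{\mu_0,2}$, whence $d_2(\mu_0)\le d_2(\mu_n,\mu_0)\le d_2(\mu_n)$. No gaps.
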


In course of his proof Yu.~A.~Rozanov obtained the following result:

\begin{theorem} \label{t1.2}
   Let $(\mu_n)$ be a sequence of bounded Borel measures on $\mathbb R$. If  $\,\lim \|\mu_0-\mu_n \|=0$, then $\lim \sup d_2(\mu_n) \leq d_2(\mu_0)$. 
\end{theorem}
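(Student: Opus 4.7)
The plan is to exploit the variational characterization of $d_2(\mu_n)$ as a minimum distance from $1$ to the closed subspace $[\mathcal T(S)]_{\mu_n,2}$, and to transport a near-optimal \emph{finite} trigonometric polynomial from $\mu_0$ to $\mu_n$ by using the total variation bound. The key observation is that since $[\mathcal T(S)]_{\mu_0,2}$ is by definition the $L^2(\mu_0)$-closure of the set $\mathcal T(S)$ of finite $\mathbb C$-linear combinations of characters $\mathrm e^{\mathrm i x\cdot}$, $x\in S$, the projection characterization gives $d_2(\mu_0)=\inf_{t\in\mathcal T(S)}\int|1-t|^2\,d\mu_0$.

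Concretely, I would fix $\varepsilon>0$ and choose $t=\sum_{j=1}^N c_j\,\mathrm e^{\mathrm i x_j\cdot}\in\mathcal T(S)$ with $\int|1-t|^2\,d\mu_0\le d_2(\mu_0)+\varepsilon$. Because the $x_j$ lie in $S$, the same $t$ belongs to $[\mathcal T(S)]_{\mu_n,2}$ for every $n$, so the extremal property of $\phi_{\mu_n,2}$ yields $d_2(\mu_n)\le\int|1-t|^2\,d\mu_n$. Since $t$ is a bounded continuous function on $\mathbb R$, the nonnegative function $f:=|1-t|^2$ is bounded, say $\|f\|_\infty=M<\infty$. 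Writing $\int f\,d\mu_n=\int f\,d\mu_0+\int f\,d(\mu_n-\mu_0)$ and estimating the second summand by $M\,\|\mu_n-\mu_0\|$, the hypothesis $\lim\|\mu_n-\mu_0\|=0$ gives $\limsup_n\int f\,d\mu_n\le\int f\,d\mu_0\le d_2(\mu_0)+\varepsilon$. Letting $\varepsilon\downarrow 0$ finishes the argument.

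There is no serious obstacle: the whole proof is a standard ``$\inf$ commutes with continuity in the weight measure'' argument, the only delicate points being the choice of a \emph{finite} polynomial (so that $|1-t|^2$ is bounded and the $\mu_n$-integral is controlled by total variation) and the observation that $t\in\mathcal T(S)$ is admissible simultaneously in every space $L^2(\mu_n)$. The complementary inequality $\liminf d_2(\mu_n)\ge d_2(\mu_0)$, which would upgrade this to a full continuity statement, is precisely what fails without the additional hypothesis $\mu_0\le\mu_n$ used in Theorem~\ref{t1.1}, and it is not claimed here.
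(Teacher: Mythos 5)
Your proof is correct and follows essentially the same route as the paper, which establishes the generalization (Theorem 3.1, under the weaker hypothesis of weak* convergence) by transporting a near-optimal finite polynomial $\tau_\varepsilon\in\mathcal T(S)$ and using that $|1-\tau_\varepsilon|^p$ is bounded and continuous to pass to the limit in $\int|1-\tau_\varepsilon|^p\,d\mu_n$. Your total-variation estimate $\bigl|\int f\,d(\mu_n-\mu_0)\bigr|\le\|f\|_\infty\,\|\mu_n-\mu_0\|$ is just the natural specialization of that step to the norm-convergence hypothesis actually assumed in Theorem \ref{t1.2}.
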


Note, that among the four prediction errors (E1)-(E4), the error (E2) is the only error that can be computed. The preceding theorem as well as Theorem 3.1 and Example 3.2 below indicate that (E2) tends to be smaller (at least, not larger)  than the true prediction error. We are in the uncomfortable situation that doing prediction we must be afraid that the true prediction error is much larger than the prediction error we compute. 

The error (E3) gives the most objective idea on the correctness of the prediction. It was the behaviour of (E3) which was investigated in early papers on stability of prediction, cf.~\cite{[21],[28]}. Several results, e.g. Theorem 4.9, 5.10 and inequality (6.2) below reveal that, unlike (E2), the error (E3) tends to be larger than (E1), although this is not always the case, cf.~Example 3.6(a). This observation was the initiation to develop minimax robust methods, where the maximum of (E3) is computed under the condition that $\mu$ runs through a set of certain spectral measures, cf.~\cite{[11]} for a survey of early results. Such an approach to stability as well as the papers of M.~Moklyachuk, e.g. \cite{[24]} are not in the focus of the present paper. 

The preceding remarks make clear that  it is both of theoretical and practical importance to state sufficient conditions for the equalities  $\lim d_2(\mu_n)=d_2(\mu_0)$ or $\lim d_2(\mu_n,\mu_0)=d_2(\mu_0)$, in the style of Theorem \ref{t1.1}. From the point of view of applications it is probably even more important to prove convergence results for the sequence $(\phi_{\mu_n,2})$ of the predictions themselves. To the best of our knowledge, this question has not yet been discussed in the literature.

To point out the variety of possible assertions we decided to deal with further topologies on the space of $\mathbb C$-valued measures along with the norm topology. Section 2 is devoted to an overview of the corresponding modes of convergence we are interested in. However, for convenience of  presentation we confine ourselves to the study of sequences $(\mu_n)_{n \in \mathbb N}$ of measures although this is not quite the adequate approach in case of a non-metrizable topology. 

Stability of the prediction problem is equivalent to stability of the corresponding trigonometric approximation problem in $L^2(\mu_0)$. It is natural to ask for stability results of analogous approximation problems in $L^p(\mu_0)$, $p \in (0,\infty]$. Since Example 3.5 indicates that one can not hope for non-trivial affirmative assertions if $p=\infty$, this case will be excluded from our investigations. If $p\in (1,2]$, the results could be useful in prediction of harmonizable symmetric $p$-stable processes, cf.~\cite{[3],[31]}. For the other values of $p$ the results could be of interest from the point of view of trigonometric approximation. Recall in this context that Szeg\"o had studied his celebrated ''Szeg\"o infimum'' as a pure trigonometric approximation problem, cf.~\cite{[29]}. and only after the spectral theory of weakly stationary processes had been invented its applicability to prediction theory became clear, cf.\cite{[16],[17]}. 

Another natural extension is the study of a more general set of parameters than $\mathbb R$. Since there exists a well-developed Fourier theory on locally compact abelian groups, we shall be concerned with such a group as a parameter set of the process. In particular, our results are applicable to stochastic fields. To avoid non-essential technical complications, we do not investigate multivariate processes although it is not hard to see that part of the results can be generalized from the univariate to the multivariate cases in a straightforward way. 

In Section 3 we give precise formulations of the stability problems we wish to discuss, and we try to establish interrelations between them. We prove general stability results, among them amplifications of Theorems \ref{t1.1} and \ref{t1.2}, cf.~Theorem 3.10. The reminder of the paper is devoted to particular prediction problems. 

In Section 4 we consider stability of interpolation of one missing value, which is one of the simplest prediction problems and was already studied by A.~N.~Kolmogorov in \cite{[16],[17]}. Corollary 4.10 exhibits that for the interpolation problem Theorems \ref{t1.1} and \ref{t1.2} remain to be valid if convergence in norm is replaced by a much weaker form of convergence, cf.~Definition 2.5. Remarkably, it seems not so easy to prove stability results for the interpolation problem in case if more than one value is missing. 

Section 5 deals with the $m$-steps-ahead prediction problem, $m \in \mathbb N$, of a sequence of random variables. For $m=1$ it is closely related to Szeg\"o's infimum problem, \cite{[29]}. Although the $m$-steps-ahead prediction problem probably is the most important and most extensively studied prediction problem, several questions remain unanswered. For example, at present a complete extension of the results in case $m=1$ to arbitrary values of $m \in \mathbb N$ is not known. 

In Section 6 we briefly discuss the case in which the set $S$ of observation points is finite. Since the approximating linear subspace is finite dimensional, rather strong stability results can be obtained, cf.~Theorem 6.2. However, to prove them we had to suppose that the dimension of the approximating subspace is equal to the number of elements of the set $S$. It would be interesting to explore what happens if this dimension is less than the cardinality of $S$. In any case the results of Section 6 suggest that from point of view of stability it is reasonable to compute the prediction of a discete time series on the basis of a finite observation set.

The final Section 7 is devoted to a periodic set of observation points, i.e. $S$ is a translate of a closed subgroup $H$ of the parameter group. Problems of this type are related to the famous Whittaker-Shannon-Kotel'nikov theorem which is of great importance in information theory. Although progress has been made in recent years, cf.~\cite{[22]}, almost all known facts pertain the stationary or Hilbert space case $p=2$. As Theorem 7.8 shows, under the assumption that the annihilator group of $H$ is at most countable, the prediction has a rather nice limit behaviour. The reason is that the prediction is bounded by $1$ independently of the underlying spectral measure.

 Along with affirmative claims there exist many negative assertions, and examples and counterexamples constitute an essential part of the present paper. To point out the variety of results let us mention some of them. It is not surprising that there is a strong dependency on $p$, in general. It is more remarkable that according to Szeg\"o's infimum theorem the one-step-ahead prediction error does not depend on $p \in (0,\infty)$. For $m$-steps-ahead prediction, the independence on $p \in [1,\infty)$ can be proved, cf.~Corollary 5.12. Clearly, the results may depend on the mode of convergence and on the observation set $S$. But, there is also an interplay between them, compare Theorem 4.6 and Corollaries 4.7 and 4.8 with Example 6.4. There may also occure a dependency on the direction of convergence. Theorem \ref{t1.1} and Corollaries 3.3(ii) and 5.7 seem to indicate that convergence from above provides better stability properties than convergence from below. Thus, if one has to choose a spectral density among several candidates, reasonably one may choose that, whose minimum modulus is the largest. It is also worth mentioning that strong stability results can be obtained as soon as the observations at the points of $S$ give full information (in the linear sense) on the underlying process, cf.~Corollary 3.3(i), Proposition 3.8, and the end of Section 3. 


\section{Modes of Convergence}

Let $\Gamma$ be a locally compact topological space with Hausdorff topology. For a subset $B \subseteq \Gamma$, denote by $1_B$ its indicator function and set $1 := 1_\Gamma$. Let $\mathcal B$ be the $\sigma$-algebra of Borel subsets of $\Gamma$ and $\mathcal M$ be the set of all non-zero finite non-negative regular measures on $B$. If $\nu$ is a $\mathbb C$-valued measure on $B$, denote its total variation by $\|\nu\|$. Recall, that $\|\cdot\|$ is a norm on the space of all $\mathbb C$-valued Borel measures on $\Gamma$. If $\mu \in \mathcal M$ and $\nu$ is absolutely continuous with respect to $\mu$, we write $\nu << \mu$ and denote by $\frac{d\nu}{d\mu}$ a Radon-Nikodym derivative. Recall, that
\begin{equation} \label{e2.1} 
    \|\nu\|=\int \left| \frac{d\nu}{d\mu} \right| \, d\mu \, ,
\end{equation}
where here and henceforth integration is over $\Gamma$ in case the domain of integration is not indicated. For $p \in (0,\infty]$, let $L^p(\mu)$ denote the common $L^p$ space of ($\mu$-equivalence classes of) $\mathbb C$-valued functions related to $\mu$. 

Throughout the present paper, let $\mu_0 \in \mathcal M$ and let $(\mu_n)_{n \in \mathbb N}$ (in short, $(\mu_n)$) denote a sequence of measures of $\mathcal M$. If $\nu \in \mathcal M$ is such that $\mu_k << \nu$, set $w_k:= \frac{d\mu_k}{d\nu}$, $k\in {\mathbb N}_0$. As a consequence of (\ref{e2.1}) the equalities $\lim \|\mu_0-\mu_n \|=0$ and $\int |w_0-w_n| \, d\nu =0$ are equivalent. Moreover, one easily concludes that $\lim \|\mu_0-\mu_n \|=0$ if and only if $\lim \mu_n(B)=\mu_0(B)$ uniformly for all $B \in \mathcal B$. 

It is near at hand to discuss certain modes of weak convergence along with norm convergence.

\begin{definition} \label{d2.1}
  We say that $(\mu_n)$ converges to $\mu_0$ weakly and write $\rm w$-$\lim \mu_n =\mu_0$ if the sequence $(w_n)$ tends to $w_0$ with respect to the weak topology of $L^1(\nu)$, i.e. if 
  \begin{equation} \label{e2.2}
     \lim \int f w_n \, d\nu = \int f w_0 \, d\nu
  \end{equation}
  for all $f \in L^\infty(\nu)$. 
\end{definition}

The following lemma gives a characterization of weak convergence and unveils that Definition \ref{d2.1} does not depend on the choice of the measure $\nu \in \mathcal M$.

\begin{lemma} \label{l2.2}
   The sequence $(\mu_n)$ converges weakly to $\mu_0$ if and only if
   \begin{equation} \label{e2.3}
      \lim \mu_n(B) =\mu_0(B)
   \end{equation}
   for all $B \in \mathcal B$.
\end{lemma}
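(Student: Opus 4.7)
The plan is to prove both implications directly from Definition \ref{d2.1}, after fixing a reference measure $\nu \in \mathcal{M}$ such that $\mu_k \ll \nu$ for all $k \in \mathbb{N}_0$ (for instance $\nu = \sum_{k=0}^\infty 2^{-k}\mu_k/\|\mu_k\|$), and writing $w_k = d\mu_k/d\nu$.

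For the necessity of (\ref{e2.3}), I would simply take $f = 1_B$ in (\ref{e2.2}); since $1_B \in L^\infty(\nu)$, the identity $\int 1_B w_k \, d\nu = \mu_k(B)$ valid for each $k \in \mathbb{N}_0$ yields (\ref{e2.3}) immediately.

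For the sufficiency, I would first extend (\ref{e2.3}) by linearity to all $\mathbb{C}$-valued simple functions: if $g = \sum_{j=1}^N c_j 1_{B_j}$ with $B_j \in \mathcal{B}$, then
\begin{equation*}
   \int g w_n \, d\nu = \sum_{j=1}^N c_j \mu_n(B_j) \longrightarrow \sum_{j=1}^N c_j \mu_0(B_j) = \int g w_0 \, d\nu.
\end{equation*}
Next, taking $B = \Gamma$ in (\ref{e2.3}) gives $\|w_n\|_{L^1(\nu)} = \mu_n(\Gamma) \to \mu_0(\Gamma)$, so the numbers $C := \sup_{k \in \mathbb{N}_0} \|w_k\|_{L^1(\nu)}$ is finite. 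Given $f \in L^\infty(\nu)$ and $\varepsilon > 0$, I would invoke the classical fact that every bounded measurable function is a uniform limit of simple functions to choose a simple $g$ with $\|f - g\|_{L^\infty(\nu)} < \varepsilon$. A standard three-term estimate
\begin{equation*}
   \left| \int f(w_n - w_0) \, d\nu \right| \le \left| \int (f-g) w_n \, d\nu \right| + \left| \int g (w_n - w_0) \, d\nu \right| + \left| \int (f-g) w_0 \, d\nu \right|
\end{equation*}
bounds the first and third summands by $2\varepsilon C$, while the middle one tends to zero by the simple-function case. Letting $\varepsilon \to 0$ finishes the proof.

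There is no genuine obstacle here: the only point that requires a moment of attention is the $L^1$-boundedness of $(w_n)$, which is what allows a uniform approximation in $L^\infty$ to be transferred to an approximation inside the integrals. This boundedness is handed to us for free by the choice $B = \Gamma$. As a byproduct, the characterization (\ref{e2.3}) is intrinsic to $(\mu_n)$ and $\mu_0$, which automatically establishes the claim advertised before the lemma that Definition \ref{d2.1} does not depend on the auxiliary measure $\nu$.
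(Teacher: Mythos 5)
Your proof is correct and follows essentially the same route as the paper: take $f = 1_B$ for the forward direction, and for the converse use the $L^1(\nu)$-boundedness of $(w_n)$ (obtained from $B = \Gamma$) together with density of simple functions in $L^\infty(\nu)$. You merely write out the three-term estimate that the paper leaves implicit.
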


\begin{proof}
Choosing $f=1_B$ in (\ref{e2.2}) we get (\ref{e2.3}), in particular, $\lim \| \mu_n \| =  \lim \mu_n(\Gamma) = \mu_0(\Gamma)=\| \mu_0 \|$. So the sequence $(w_n)$ is bounded in $L^1(\nu)$. Since the linear space of simple functions is dense in $L^\infty(\nu)$, (\ref{e2.2}) follows from (\ref{e2.3}).
\end{proof}

\begin{definition} \label{d2.3}
  We say that $(\mu_n)$ converges to $\mu_0$ in the weak* sense and write $\rm w^*$-$\lim \mu_n= \mu_0$ if $\lim  \int f \, d\mu_n = \int f \, d\mu_0$ for any bounded continuous $\mathbb C$-valued function on $\Gamma$. 
\end{definition}

Note, that $\rm w^*$-$\lim \mu_n =\mu_0$ yields $\lim \mu_n(\Gamma) =\mu_0(\Gamma)$. A sufficient condition for weak* convergence is $\lim \mu_n(U) =\mu_0(U)$ for those open subsets $U$ of $\Gamma$ whose boundary has $\mu_0$-measure $0$. If $\Gamma$ is a normal topological space, the condition is also necessary, cf.~\cite[Thm. 4.5.15]{[5]}.

\begin{remark} \label{r2.4}
In probability theory the mode of convergence introduced in Definition \ref{d2.3} is often called ''weak convergence''. Since $\mathcal M$ is a subset of the dual space of the Banach space of bounded continuous functions the differing notion is in accordance with the terminology of functional analysis. 
\end{remark}

Let $f_k$, $k \in {\mathbb N}_0$, be $\mathbb C$-valued measurable functions on $\Gamma$ and let $\nu \in\mathcal M$. Recall, that a sequence $(f_n)$ converges to $f_0$ in measure $\nu$ (notation $\nu$-$\lim f_n = f_0$) if for arbitrary $\varepsilon \in (0,\infty)$ there exists a $n_0 \in \mathbb N$ such that $\nu(|f_0-f_n| > \varepsilon) < \varepsilon$ for any $n \geq n_0$ in $\mathbb N$. Since in practice the estimated spectral measure of the modelling stationary process is often absolutely continuous with respect to the Lebesgue measure $\lambda$ on the interval $[0, 2\pi)$, cf. \cite{[2]},  one could also study convergence in measure $\lambda$. To apply the concept of convergence in measure to general sequences of bounded measures we give the following definition.

\begin{definition} \label{d2.5}
   We say that $(\mu_n)$ converges to $\mu_0$ in measure and write m-$\lim \mu_n = \mu_0$ if $(w_n)$ tends to $w_0$ in measure $\nu$. 
\end{definition}

This definition can only be correct if the coined notion does not depend on the choice of $\nu$. 

\begin{lemma} \label{l2.6}
   Let $\nu_1, \nu_2 \in \mathcal M$ be two measures with $\mu_k << \nu_1$ and $\mu_k << \nu_2$. Set $w^{(i)}_k := \frac{d\mu_k}{d\nu_i}$ for $k \in {\mathbb N}_0$, $i \in \{ 1,2\}$. Then $\nu_1$-$\lim w^{(1)}_n = w^{(1)}_0$ if and only if $\nu_2$-$\lim w^{(2)}_n = w^{(2)}_0$.
\end{lemma}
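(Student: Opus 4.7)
The plan is to introduce the sum $\nu := \nu_1 + \nu_2 \in \mathcal{M}$ as a common dominating measure and reduce the lemma to an intermediate equivalence involving the auxiliary densities $w_k := d\mu_k/d\nu$ and $h_i := d\nu_i/d\nu \in [0,1]$ (so $h_1 + h_2 = 1$ and $\nu_i(\{h_i = 0\}) = 0$). The chain rule for Radon-Nikodym derivatives gives $w_k = w^{(i)}_k \, h_i$ $\nu$-almost everywhere, and since $\mu_k \ll \nu_i$ forces $w_k = 0$ $\nu$-a.e.\ on $\{h_i = 0\}$, the identities $w^{(i)}_k = w_k/h_i$ hold where meaningful. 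It suffices to prove
\[
\nu_i\text{-}\lim w^{(i)}_n = w^{(i)}_0 \iff \nu\text{-}\lim w_n = w_0, \qquad i = 1, 2,
\]
after which the lemma follows by transitivity through $\nu$.

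For the forward direction ``$\nu$-$\lim \Rightarrow \nu_i$-$\lim$'' I fix $\varepsilon, \delta > 0$ and split the level set $\{|w^{(i)}_n - w^{(i)}_0| > \varepsilon\}$ (which sits $\nu_i$-a.e.\ in $\{h_i > 0\}$) at the threshold $h_i = \eta$. On $\{h_i \geq \eta\}$ the identity $|w^{(i)}_n - w^{(i)}_0| = |w_n - w_0|/h_i$ puts the event inside $\{|w_n - w_0| > \varepsilon\eta\}$, whose $\nu_i$-measure is at most $\nu(\{|w_n - w_0| > \varepsilon\eta\}) \to 0$ by hypothesis; on $\{0 < h_i < \eta\}$ the crude estimate $\nu_i(\{0 < h_i < \eta\}) \leq \eta\,\nu(\Gamma)$ handles the rest once $\eta$ is small. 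Choosing $\eta$ first and then $n$ large delivers the conclusion.

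The converse ``$\nu_i$-$\lim \Rightarrow \nu$-$\lim$'' (say $i = 1$) requires controlling both summands of $\nu = \nu_1 + \nu_2$. The $\nu_1$-summand is immediate from $|w_n - w_0| = |w^{(1)}_n - w^{(1)}_0|\,h_1 \leq |w^{(1)}_n - w^{(1)}_0|$. For the $\nu_2$-summand I again threshold at $h_1 = \eta$: on $\{h_1 \geq \eta\}$ the inequality $h_2 \leq 1 \leq h_1/\eta$ bounds the contribution by $(1/\eta)\,\nu_1(\{|w^{(1)}_n - w^{(1)}_0| > \varepsilon\}) \to 0$; on $\{0 < h_1 < \eta\}$ the contribution is at most $\nu_2(\{0 < h_1 < \eta\}) \to 0$ as $\eta \to 0^+$, by continuity of the finite measure $\nu_2$ from above. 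The main obstacle is precisely this last $\nu_2$-estimate: the assumption $\mu_k \ll \nu_1$ provides no direct control of $\nu_2$ on $\{h_1 = 0\}$, and only the observation that $w_n - w_0$ itself vanishes $\nu$-a.e.\ on $\{h_1 = 0\}$ lets the level set be confined to $\{h_1 > 0\}$, where the thresholding argument applies.
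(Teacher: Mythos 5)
Your proof is correct, and it reorganizes the argument in a way that is genuinely different from the paper's. The paper first treats the case $\nu_1 \ll \nu_2$ by truncating the density $v=\frac{d\nu_1}{d\nu_2}$, which is only known to lie in $L^1(\nu_2)$ and may be unbounded and may vanish, so that transferring smallness of a level set between $\nu_1$ and $\nu_2$ really requires control of $v$ from both above and below; the reverse implication is then obtained by restricting to $\{v>0\}$, and only at the very end is the general case reduced to this one via $\nu_1+\nu_2$. You symmetrize through $\nu:=\nu_1+\nu_2$ from the outset, which buys you densities $h_i=\frac{d\nu_i}{d\nu}\in[0,1]$ with $h_1+h_2=1$: the upper truncation becomes unnecessary, a single lower threshold $h_i\geq\eta$ suffices in each direction, and the inequalities $\nu_i\leq\nu$, $\nu_i(\{0<h_i<\eta\})\leq\eta\,\nu(\Gamma)$ and $\nu_2\leq\frac{1}{\eta}\nu_1$ on $\{h_1\geq\eta\}$ are all immediate. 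You also isolate cleanly the one place where the hypothesis $\mu_k\ll\nu_i$ is actually used, namely to force $w_k=0$ $\nu$-a.e.\ on $\{h_i=0\}$ so that the level sets live in $\{h_i>0\}$ where the thresholding applies; this is exactly the point that the paper handles by passing to the set $B=\{v>0\}$. Your version is, if anything, the more robust write-up: with densities bounded by $1$ the measure-transfer steps are transparent, whereas the paper's printed estimate $\nu_2\bigl((|w^{(1)}_0-w^{(1)}_n|>\tfrac{\varepsilon}{c})\cap(v\leq c)\bigr)<c\cdot\nu_1(\cdots)$ would need $v$ bounded below on that set, i.e.\ a second, lower truncation that your arrangement renders unnecessary.
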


\begin{proof}
First, suppose $\nu_1 << \nu_2$ and set $v := \frac{d\nu_1}{d\nu_2}$. Recall, that $w^{(2)}_k = w^{(1)} v$ by the chain rule for Radon-Nikodym derivatives. Let $\varepsilon \in (0,\infty)$. Since $v \in L^1(\nu_2)$, there exists a number $c \in (0,\infty)$ satisfying $\nu_2( v > c)< \frac{\varepsilon}{2}$. If $\nu_1$-$\lim w^{(1)}_n = w^{(1)}_0$, we can choose a number $n_0 \in \mathbb N$ such that $\nu_1(|w^{(1)}_0 - w^{(1)}_n| > \frac{\varepsilon}{c}) < \frac{\varepsilon}{2c}$ for all numbers $n \geq n_0$.  Consequently, 
\begin{eqnarray*}
    \nu_2(|w^{(2)}_0 - w^{(2)}_n| > \varepsilon) & \leq & \nu_2((|w^{(1)}_0 - w^{(1)}_n| > \frac{\varepsilon}{c}) \cap (v \leq c)) + \nu_2(v > c) \\
                                                                         &  <   &  c \cdot \nu_1(|w^{(1)}_n-w^{(1)}_0| > \frac{\varepsilon}{c}) + \frac{\varepsilon}{2} \\
                                                                         & \leq & \varepsilon
\end{eqnarray*}
for all numbers $n \geq n_0$. 

Conversely, let $\nu_2$-$\lim w^{(2)}_n = w^{(2)}_0$. To prove the equality $\nu_1$-$\lim w^{(1)}_n = w^{(1)}_0$ it is sufficient to show it on the set $B := \{\gamma \in \Gamma : v(\gamma) > 0 \}$. Since the measures $\nu_1$ and $\nu_2$ are equivalent on $B$, the assertion follows from the first part of the proof. Finally, if $\nu_1$ and $\nu_2$ are arbitrary, we have $\nu_1$-$\lim w^{(1)}_n = w^{(1)}_0$ if and only if $(\nu_1+\nu_2)$-$\lim w^{(1)}_n \frac{d\nu_1}{d(\nu_1+\nu_2)} = w^{(1)}_0 \frac{d\nu_1}{d(\nu_1+\nu_2)}$, if and only if $(\nu_1+\nu_2)$-$\lim w^{(2)}_n \frac{d\nu_2}{d(\nu_1+\nu_2)} = w^{(2)}_0 \frac{d\nu_2}{d(\nu_1+\nu_2)}$, if and only if $\nu_2$-$\lim w^{(2)}_n = w^{(2)}_0$.
\end{proof}

Note that Lemma 2.6 fails if the measures $\nu_1$ and $\nu_2$ are allowed to be $\sigma$-finite. For example, if $\Gamma := \mathbb N, \mu_n := \delta_n$ is the Dirac measure at the point $n \in \mathbb N$, $\nu_1 :=\sum_{n=1}^\infty \frac{1}{n^2}\delta_n$ and $\nu_2$ is the countimg measure, then $\nu_1$-$\lim w_n^{(1)} = 0$ whereas $\nu_2$-$\lim w_n^{(2)}$ does not exist.

 Of course, all these convergence notions introduced above are equivalent if $\Gamma$ is a finite set. In general, weak convergence does not imply convergence in measure, which in turn does not yield weak* convergence. Norm convergence is stronger than weak  convergence, as well as it is stronger than convergence in measure, and weak convergence is stronger than weak* convergence. In general, these inclusions can be sharp. However, there exist interesting particular cases, in which some of these notions are equivalent. For example, if $\Gamma= [0,1]$ and the measures $\mu_k$, $k \in {\mathbb N}_0$, are absolutely continuous with respect to the Lebesque measure $\lambda$ then weak and weak* convergence coincide. To see this, note that $\Gamma=[0,1]$ is normal and compact as a topological space. Furthermore, any open subset $U$ of $\Gamma$ is an at most countable union of open intervals, which implies that the boundary of $U$ has $\mu_0$-measure $0$. Therefore, $\rm w^*$-$\lim \mu_n = \mu_0$ if and only if $\lim\mu_n(U) = \mu_0(U)$ for any open subset $U$ of $\Gamma$. According to a theorem by Jean Dieudonn\'e, cf.~\cite[IV.16]{[5]}, this is equivalent to the condition $\lim \mu_n(B) = \mu_0(B)$ for any $B \in \mathcal B$. So $\rm w$-$\lim\mu_n =\mu_0$ by Lemma 2.2. 

\begin{proposition} \label{p2.7}
  Let $\Gamma$ be a discrete space, i.e. all subsets of $\Gamma$ are open. Then weak* convergence implies convergence in norm. If $\,\Gamma$ is an infinite set, there exists a sequence converging in measure, but not converging in the weak* sense. 
\end{proposition}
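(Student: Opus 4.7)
The plan is to split the proof along the two assertions and, in each case, reduce everything to a concrete statement about the atoms of the measures in question. Since every subset of $\Gamma$ is open, every $\mathbb{C}$-valued function on $\Gamma$ is continuous, and any finite non-negative measure on $\Gamma$ is concentrated on a countable set of atoms. I would therefore encode each $\mu_k \in \mathcal{M}$ by its atom weights $a_k(\gamma) := \mu_k(\{\gamma\})$ and use the identity $\|\mu_n - \mu_0\| = \sum_{\gamma \in \Gamma} |a_n(\gamma) - a_0(\gamma)|$ to recast norm convergence as an $\ell^1$-type statement.

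For the first assertion, suppose $\mathrm{w}^*$-$\lim \mu_n = \mu_0$. Testing against the bounded continuous indicators $1_{\{\gamma\}}$ gives the pointwise convergence $a_n(\gamma) \to a_0(\gamma)$ for each $\gamma \in \Gamma$, while testing against the constant function $1$ gives $\sum_\gamma a_n(\gamma) \to \sum_\gamma a_0(\gamma)$. From here I would run a Scheff\'e-type argument: decompose $|a_n - a_0| = (a_n - a_0)^+ + (a_n - a_0)^-$, note that $(a_n - a_0)^- \leq a_0$ with $\sum_\gamma a_0(\gamma) = \|\mu_0\| < \infty$, and conclude by dominated convergence that $\sum_\gamma (a_n - a_0)^-(\gamma) \to 0$. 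Combined with the convergence $\sum_\gamma (a_n - a_0)(\gamma) \to 0$, this forces $\sum_\gamma (a_n - a_0)^+(\gamma) \to 0$ as well, hence $\|\mu_n - \mu_0\| \to 0$.

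For the second assertion I would fix a countable family of distinct points $\gamma_0, \gamma_1, \gamma_2, \ldots$ in the infinite discrete space $\Gamma$ and set $\mu_0 := \delta_{\gamma_0}$ and $\mu_n := \delta_{\gamma_0} + \delta_{\gamma_n}$ for $n \in \mathbb{N}$. As a common dominating measure I would take $\nu := \delta_{\gamma_0} + \sum_{n=1}^\infty 2^{-n} \delta_{\gamma_n} \in \mathcal{M}$; then the Radon--Nikodym densities $w_k := d\mu_k/d\nu$ satisfy $w_n - w_0 = 2^n \cdot 1_{\{\gamma_n\}}$, so $\nu(|w_n - w_0| > \varepsilon) \leq 2^{-n}$ for every $\varepsilon > 0$, yielding m-$\lim \mu_n = \mu_0$. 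On the other hand the bounded continuous constant function $1$ satisfies $\int 1\, d\mu_n = 2$ for all $n$ while $\int 1\, d\mu_0 = 1$, so the sequence does not converge in the weak* sense.

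The only non-routine ingredient is the implication from pointwise plus total-mass convergence of the atom weights to $\ell^1$ convergence, i.e.\ the Scheff\'e step; modulo that, both halves of the proposition amount to direct bookkeeping with the discrete structure.
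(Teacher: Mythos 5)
Your proof is correct, but it takes a genuinely different route from the paper's in both halves. For the first assertion the paper identifies each $\mu_k$ with a sequence in $\ell_1$, observes that weak* convergence of measures on a discrete space is weak convergence in $\ell_1$ (since the bounded continuous functions are exactly $\ell_\infty$), and then invokes the Schur property of $\ell_1$. You instead exploit positivity: testing only against $1_{\{\gamma\}}$ and $1$ gives pointwise convergence of the atom weights together with convergence of the total masses, and a Scheff\'e argument upgrades this to $\ell^1$ convergence. This is more elementary and self-contained (no appeal to the Schur property), at the price of using the non-negativity of the measures, which the Banach-space argument does not need; both routes implicitly use regularity to know that each $\mu_k$ is concentrated on a countable set of atoms and that $\|\mu_n-\mu_0\|=\sum_\gamma|a_n(\gamma)-a_0(\gamma)|$. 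For the second assertion the paper takes $\mu_n=\delta_{\gamma_n}$, whose limit in measure is the zero measure (not an element of $\mathcal M$ as the paper defines it), whereas your $\mu_n=\delta_{\gamma_0}+\delta_{\gamma_n}$ converges in measure to a genuine $\mu_0\in\mathcal M$, which is cleaner. One small point: as written you only show that $(\mu_n)$ does not converge weak* \emph{to $\mu_0$}, while the claim is that no weak* limit exists. This is closed in one line: any weak* limit $\mu$ would satisfy $\mu(\{\gamma_0\})=1$, $\mu(\{\gamma\})=0$ for every other $\gamma$ (test against $1_{\{\gamma\}}$), hence $\mu(\Gamma)=1$ by countable atomicity, contradicting $\lim\mu_n(\Gamma)=2$; alternatively, combine the first assertion with the uniqueness of limits in measure. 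You should add that sentence, but the construction itself is sound.
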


\begin{proof}
We can assume that $\Gamma$ is infinite since the finite case is known. Select a countable subset $D := \{ \gamma_i :i \in {\mathbb N}\}$ such that $\mu_k(\Gamma \setminus D)=0$ for all $k \in {\mathbb N}_0$. Identify $\mu_k$ with the sequence $s_k := (\mu_k(\{ \gamma_i \}))_{i \in \mathbb N}$. The sequence $s_k$ is an element of the space $l_1$ of absolutely summable sequences, and $\lim \|\mu_0-\mu_k\|=0$ is equivalent to $\lim s_k = s_0$ with respect to the norm topology of $l_1$. Similarly, $w^*$-$\lim \mu_n = \mu_0$ is equivalent to $\lim s_n = s_0$ with respect to the weak topology of $l_1$,  because the space $l_\infty$ and the space of bounded continuous functions on $\mathbb N$ coincide.  Therefore, the first assertion is a consequence of the fact that on the space $l_1$ norm convergence and weak convergence of sequences are equivalent, cf.~\cite[pp.~218-220]{[23]}. To prove the second assertion, set $\mu_n(\{\gamma_i\}) := \delta_{n,i}$ for any $n,i \in {\mathbb N}$. We obtain $\rm m$-$\lim \mu_n =0$. Moreover, $\lim \int 1_D \, d\mu_n = 1$ and $\lim \int 1_{\{ \gamma \}} \, d\mu_n =0$ for any $\gamma \in \Gamma$, what implies the non-existence of a weak* limit of the sequence $(\mu_n)$. 
\end{proof}

It is worth mentioning that there exist sequences $(\mu_n)$ on $\Gamma =[0,1]$ such that both the limits $\rm w^*$-$\lim \mu_n$ and $\rm m$-$\lim \mu_n$ exist, but are unequal. To give an example, set $d\mu_n:= n \cdot 1_{[0,\frac{1}{n})} \, d\lambda$ on $[0,1]$. This gives $\rm m$-$\lim\mu_n =0$ and $\rm w^*$-$\lim \mu_n = \delta_0$, the Dirac measure at zero. 

\begin{proposition} \label{p2.8}
  If $\rm w$-$\lim\mu_n$ and $\rm m$-$\lim \mu_n$ exist, they are equal. 
\end{proposition}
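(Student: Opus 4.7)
The plan is to reduce the statement to a claim about Radon--Nikodym densities and then exploit the compatibility between weak convergence and convergence in measure. Write $\mu_0 := \mathrm{w}$-$\lim \mu_n$ and $\mu'_0 := \mathrm{m}$-$\lim \mu_n$, choose $\nu \in \mathcal M$ with $\mu_0, \mu'_0, \mu_n \ll \nu$ for every $n$ (for instance $\nu := \mu_0 + \mu'_0 + \sum_n 2^{-n}\mu_n/\|\mu_n\|$), and set $w_k := d\mu_k/d\nu$, $w'_0 := d\mu'_0/d\nu$. By Lemma~\ref{l2.2} the assumption of weak convergence becomes $\mu_n(B) \to \mu_0(B)$ for every $B \in \mathcal B$, while convergence in measure reads $w_n \to w'_0$ in $\nu$-measure (by Lemma~\ref{l2.6} the specific choice of $\nu$ is immaterial). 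The task is reduced to showing $w_0 = w'_0$ $\nu$-almost everywhere.

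Next I would extract from $(w_n)$ a subsequence $(w_{n_k})$ that converges to $w'_0$ $\nu$-a.e.\ (possible because $\nu$ is a finite measure), and invoke Egorov's theorem: for each $m \in \mathbb N$ there exists $A_m \in \mathcal B$ with $\nu(\Gamma \setminus A_m) < 1/m$ on which $w_{n_k} \to w'_0$ uniformly.

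Fix $B \in \mathcal B$. Uniform convergence on $A_m$ yields $\int_{B \cap A_m} w_{n_k}\, d\nu \to \int_{B \cap A_m} w'_0\, d\nu$, whereas Lemma~\ref{l2.2} gives $\int_{B \cap A_m} w_{n_k}\, d\nu = \mu_{n_k}(B \cap A_m) \to \mu_0(B \cap A_m) = \int_{B \cap A_m} w_0\, d\nu$. Equating these two limits gives
\[
    \int_{B \cap A_m} w'_0\, d\nu = \int_{B \cap A_m} w_0\, d\nu
\]
for every $m$. Since $w_0, w'_0 \in L^1(\nu)$ and $\nu(\Gamma \setminus A_m) \to 0$, the dominated convergence theorem lets me pass $m \to \infty$ and conclude $\mu'_0(B) = \mu_0(B)$ for every $B \in \mathcal B$, whence $\mu_0 = \mu'_0$.

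The main obstacle I anticipate is the notorious ``loss of mass'' phenomenon: convergence in measure alone does not transfer to convergence of integrals, so one cannot pass to the limit directly in $\int 1_B w_n\, d\nu$ using only the $\mathrm{m}$-limit. Egorov's theorem circumvents this by localizing the a.e.\ convergence to sets of large $\nu$-measure where it is uniform, while weak convergence (via Lemma~\ref{l2.2}) takes care of the small complementary sets. A slicker alternative would invoke the Dunford--Pettis theorem (weak $L^1$-convergence implies uniform integrability) followed by Vitali's convergence theorem, but the Egorov route is more self-contained and matches the framework already set up in the paper.
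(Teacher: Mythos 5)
Your proof is correct and takes essentially the same route as the paper's: extract a subsequence of $(w_n)$ converging $\nu$-a.e.\ to the density of the $\mathrm m$-limit, apply Egorov's theorem, use uniform convergence on the Egorov set together with Lemma \ref{l2.2} to identify the two limits there, and then dispose of the small exceptional set. The only difference is organizational — the paper argues by contradiction on a single positive-measure set where the two densities would differ, while you exhaust $\Gamma$ by the sets $A_m$ and pass to the limit in $m$ (a step justified by absolute continuity of the integral of $w_0,w_0'\in L^1(\nu)$ rather than dominated convergence in the strict sense, but correct).
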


\begin{proof}
Let $\rm w$-$\lim \mu_n=\mu^{(1)}_0$ and $\rm m$-$\lim \mu_n = \mu^{(2)}_0$. Let $\nu \in \mathcal M$ be such that $\mu_n << \nu$, $\mu^{(1)}_0, \mu^{(2)}_0 << \nu$. Set $w^{(i)}_0=\frac{d\mu^{(i)}_0}{d \nu}$, $i \in \{ 1,2\}$. If $\mu^{(1)}_0 \not= \mu^{(2)}_0$ there exist a set $B \in \mathcal B$ with $\nu(B) > 0$ and 
\begin{equation} \label{e2.4}
   w^{(1)}_0 \not= w^{(2)}_0 \qquad \nu {\rm - a.e.} \,\, {\rm on} \,\, B \, .
\end{equation}
Choosing a subsequence $(\mu_m)$ of $(\mu_n)$ with $\lim_{m \to \infty} w_m = w^{(2)}_0$ $\nu$-a.e. and applying Egorov's theorem, we find a set $C \subseteq B$ satisfying $\nu(C)>0$ and
\begin{equation} \label{e2.5}
  \lim_{m \to\infty} 1_Cw_m = 1_Cw^{(2)}_0
\end{equation}
uniformly. If $\mu(C \cap \cdot)$ denotes the restriction of a measure $\mu \in \mathcal M$ to ${\mathcal B}(C)$, we can write $\rm w$-$\lim_{m \to\infty} \mu_m(C \cap \cdot)=\mu^{(2)}_0(C \cap \cdot)$ by (\ref{e2.5}) and $\rm w$-$\lim_{m \to\infty} \mu_m(C \cap \cdot) = \mu^{(1)}_0(C \cap \cdot)$ by Lemma \ref{l2.2}. Therefore, $\mu^{(1)}_0(C \cap \cdot) = \mu^{(2)}_0(C \cap \cdot)$, what implies $w^{(1)}_0=w^{(2)}_0$ $\nu$-a.e. on $C$, a contradiction to (\ref{e2.4}).
\end{proof}

In \cite[Lemma 1 of Ch.~1]{[6]} it was shown that for $\mu \in \mathcal M$, $p \in (0, \infty)$, $f_k \in L^p(\mu)$ with $k\in {\mathbb N}_0$ the conditions
\begin{equation} \label{e2.6}
   \lim\int | f_n |^p \, d\mu =\int |f_0|^p \, d\mu \, 
\end{equation}
and 
\begin{equation} \label{e2.7}
   \lim f_n = f_0 \qquad \mu {\rm -a.e.} \,
\end{equation}
 imply
\begin{equation} \label{e2.8}
  \lim \int |f_0-f_n|^p \, d\mu = 0 \, . 
\end{equation}

For future use we state a slight generalization of this assertion.

\begin{lemma} \label{l2.9}
  From (\ref{e2.6}) and $\mu-\lim f_n = f_0$ the limit equation (\ref{e2.8}) follows. 
\end{lemma}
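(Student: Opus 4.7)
The plan is to reduce the assertion to the already cited classical version, which assumes pointwise ($\mu$-a.e.) convergence instead of convergence in measure. The natural bridge is the subsequence principle: on a finite measure space, a sequence converges in measure to $f_0$ if and only if every subsequence admits a further subsequence converging to $f_0$ pointwise $\mu$-a.e. Since $\mu \in \mathcal M$ is finite, this tool is available.

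First I would fix an arbitrary subsequence $(f_{n_k})$ of $(f_n)$. Because $\mu$-$\lim f_n = f_0$, the sub-sequence still converges to $f_0$ in measure, so I can extract a further subsequence $(f_{n_{k_j}})_{j \in \mathbb N}$ with $\lim_j f_{n_{k_j}} = f_0$ $\mu$-a.e. The hypothesis (\ref{e2.6}) passes to this subsequence as well, giving $\lim_j \int |f_{n_{k_j}}|^p \, d\mu = \int |f_0|^p \, d\mu$. Both hypotheses of the classical statement from \cite[Lemma 1 of Ch.~1]{[6]} are therefore satisfied along $(f_{n_{k_j}})$, which yields $\lim_j \int |f_0 - f_{n_{k_j}}|^p \, d\mu = 0$.

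To conclude, I would apply the standard subsequence criterion to the real sequence $a_n := \int |f_0 - f_n|^p \, d\mu$: since every subsequence $(a_{n_k})$ was just shown to contain a further subsequence $(a_{n_{k_j}})$ tending to $0$, the whole sequence $(a_n)$ tends to $0$, which is precisely (\ref{e2.8}).

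I do not expect any serious obstacle. The only point that requires a little care is checking that the subsequence extraction from convergence in measure to a.e.\ convergence is legitimate in the present setting; this is standard for finite measures and applies since $\mu \in \mathcal M$ is bounded, so no truncation or localization argument is needed.
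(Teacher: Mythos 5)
Your proof is correct and follows essentially the same route as the paper: both extract from an arbitrary subsequence a further subsequence converging $\mu$-a.e., apply the cited classical lemma along it, and conclude for the full sequence (the paper phrases this last step as a proof by contradiction, you as the standard subsequence criterion for real sequences, which is the same argument).
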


\begin{proof}
Assume the contrary, i.e. there exists $c \in (0,\infty)$ and a subsequence $(f_m)$ of the sequence $(f_n)$ such that 
\begin{equation} \label{e2.9}
  \int | f_0-f_m |^p \, d\mu \geq c
\end{equation}
for any $m$. Again, choose a subsequence $(f_l)$ of the sequence $(f_m)$ such that $\lim f_l = f_0$ $\mu$-a.e.. From the result cited above one derives $\lim_{l \to \infty} \int | f_0-f_l |^p \, d\mu = 0$, what contradicts (\ref{e2.9}). 
\end{proof}

Applying Lemma \ref{l2.9} with $p=1$ and $f_k=w_k$ we get the following result.

\begin{corollary} \label{c2.10}
  If $\rm w^*$-$\lim \mu_n = \rm m$-$\lim \mu_n =\mu_0$, then $\lim \| \mu_0-\mu_n \| = 0$. 
\end{corollary}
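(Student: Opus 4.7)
The plan is to recognize the statement as a direct application of Lemma~\ref{l2.9} with $p = 1$ to the Radon–Nikodym densities of the $\mu_k$ with respect to a common dominating measure. First I would fix such a measure. Since $\mathrm{w}^*$-$\lim \mu_n = \mu_0$ applied to the bounded continuous function $1$ yields $\lim \mu_n(\Gamma) = \mu_0(\Gamma)$ (as noted immediately after Definition~\ref{d2.3}), the sequence $(\|\mu_n\|)$ is bounded, so
\[
  \nu := \mu_0 + \sum_{n=1}^{\infty} \frac{\mu_n}{2^n(1+\|\mu_n\|)}
\]
defines a non-zero finite non-negative regular Borel measure on $\Gamma$ satisfying $\mu_k \ll \nu$ for every $k \in \mathbb{N}_0$.

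Next, set $w_k := d\mu_k/d\nu \in L^1(\nu)$, so that $w_k \ge 0$ $\nu$-a.e. Both hypotheses required by Lemma~\ref{l2.9} (applied with $\mu = \nu$, $p = 1$, and $f_k = w_k$) are then at hand. The convergence in measure $\nu$-$\lim w_n = w_0$ is exactly the assumption $\mathrm{m}$-$\lim \mu_n = \mu_0$ via Definition~\ref{d2.5}, with Lemma~\ref{l2.6} ensuring that this reading does not depend on the particular $\nu$ chosen. The $L^1$-norm condition~(\ref{e2.6}), which here asserts $\lim \int |w_n|\, d\nu = \int |w_0|\, d\nu$, reduces by non-negativity of the $w_k$ to the identity $\lim \mu_n(\Gamma) = \mu_0(\Gamma)$ already extracted from weak* convergence.

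Lemma~\ref{l2.9} then furnishes $\lim \int |w_0 - w_n|\, d\nu = 0$, and by formula~(\ref{e2.1}) the left-hand integral equals $\|\mu_0 - \mu_n\|$, which completes the proof. There is no substantive obstacle here: the whole argument amounts to recognizing that the two convergence hypotheses deliver precisely the two inputs of Lemma~\ref{l2.9}, the only mild technicality being the selection of a single $\nu \in \mathcal{M}$ that dominates the entire sequence.
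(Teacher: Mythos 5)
Your proposal is correct and matches the paper's proof, which is exactly the one-line application of Lemma~\ref{l2.9} with $p=1$ and $f_k=w_k$: weak* convergence supplies $\lim\mu_n(\Gamma)=\mu_0(\Gamma)$, i.e.\ condition (\ref{e2.6}), convergence in measure supplies the second hypothesis, and (\ref{e2.1}) converts the conclusion into $\lim\|\mu_0-\mu_n\|=0$. The explicit construction of the dominating measure $\nu$ is a standard detail the paper leaves implicit.
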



\section{General results}

Let $G$ be an LCA group, i.e. a locally compact abelian group with Hausdorff topology. Let $\Gamma$ denote its dual group of continuous characters. The group $G$ can be identified with the dual of $\Gamma$, and the character on $\Gamma$ generated by $x \in G$ is denoted by $e_x$. Group operations are written additively, and the letter $\lambda$ stands for a Haar measure on $\Gamma$, which in case of compact $\Gamma$ is presumed to be normalized, i.e. $\lambda(\Gamma)=1$. Recall, that $\mathcal M$ denotes the set of all non-zero non-negative regular measures on the Borel $\sigma$-algebra $\mathcal B$ of $\Gamma$. 

Let $S$ be a non-empty subset of $G \setminus \{ 0 \}$ and ${\mathcal T}(S)$ be the linear space of all complex-valued trigonometric $S$-polynomials, i.e. the linear space of all finite sums of the form $\Sigma_j a_je_{x_j}$ with $a_j \in {\mathbb C}$, $x_j \in S$. If $p \in (0, \infty)$ and $\mu \in {\mathcal M}$, define the distance of the functrion $1$ to ${\mathcal T}(S)$ with respect to the metric of $L^p(\mu)$ by
\begin{equation} \label{(e3.1)}
    d_p(\mu ; S):= \inf \left\{ \int |1-\tau|^p \, d\mu : \tau \in {\mathcal T}(S) \right\} \, .
\end{equation}
For convenience, $d_p(\mu ; S)$ is called the distance of $1$ to ${\mathcal T}(S)$. Although if $p \in (1, \infty )$, it would be more correct to call it the $p$-th power of the distance. 

Let $[{\mathcal T}(S)]_{\mu, p}$ be the closure of ${\mathcal T}(S)$ in $L^p(\mu)$. If there exists a unique element $\phi_{\mu, p} \in [{\mathcal T}(S)]_{\mu, p}$ satisfying $d_p(\mu ; S) = \int |1-\phi_{\mu,p}(S)|^p \, d\mu$ it is called the metric projection of $1$ onto $[{\mathcal T}(S)]_{\mu,p}$. Recall, that in case $p \in (1, \infty )$ the metric projection exists for all $S$. 

Let $\mu \in {\mathcal M}$ be such that $\mu << \nu$. Then the $\nu$-equivalence yields $\mu$-equivalence, and if $\phi_{\nu,p}(S)$ exists the integral
\begin{equation} \label{(e3.2)}
    d_p(\nu,\mu ; S) := \int |1-\phi_{\nu,p}|^p \, d\mu
\end{equation}
is defined uniquely. Note, that values like $d_p(\nu,\mu ; S)= \infty$ in case $\phi_{\nu,p}(S) \not\in L^p(\mu)$. If $\mu$ is not absolutely continuous with respect to $\nu$, there exists a $B \in {\mathcal B}$ with $\mu(B) >0$ and $\nu(B)=0$. Since $\phi_{\nu,p}(S)$ can be arbitrarily chosen on $B$, the integral on the right hand side of (\ref{(e3.2)}) cannot be given a sense. Therefore, whenever we shall be concerned with $d_p(\nu,\mu ; S)$ we always suppose that $\phi_{\nu,p}(S)$ exists and $\mu << \nu$ although this will not be mentioned explicitely each time.  If $\nu$ is the estimate of the spectral measure $\mu$ of a harmonizable symmetric $p$-stable process, then $d_p(\nu,\mu ; S)$ can be interpreted as the true error of the estimated prediction $\phi_{\nu,p}(S)$. We should conclude that $\phi_{\nu,p}(S)$ is an unsuitable prediction in case it does not belong to $L^p(\mu)$. 

Let $\mu_k \in {\mathcal M}$, $k \in {\mathbb N}_0$. As described in the introduction, from the point of view of prediction theory it is of interest to study the behaviour of the sequence $(\phi_{\mu_n,p})_{n \in {\mathbb N}}$ if $(\mu_n)$ tends to $\mu_0$ in one or another way. More precisely, we try to give necessary and sufficient conditions for any of the following relations:
\begin{itemize}
\item[(R1)] $\lim \, d_p(\mu_n ; S) = d_p(\mu_0 ; S)$ , 
\item[(R2)] $\lim \, d_p(\mu_n,\mu_0 ; S) = d_p(\mu_0 ; S)$ ,
\item[(R3)] $\lim \, \int |\phi_{\mu_0,p}-\phi_{\mu_n,p} |^p \, d\mu_0 =0$ .
\end{itemize}
Of course, in the cases (R2) and (R3) we presume that the respective metric projections exist and $\mu_0 << \mu_n$ for any $n \in {\mathbb N}$. To simplify the notation we set $\phi_{k,p}(S) := \phi_{\mu_k,p}(S)$, and frequently we shall not indicate the dependency on the set $S$, e.g. ${\mathcal T}:={\mathcal T}(S)$, $d_p(\mu_k):=d_p(\mu_k ; S)$, $\phi_{k,p}:= \phi_{k,p}(S)$ etc. .

Our first result is a generalization of Theorem \ref{t1.2}.

\begin{theorem} \label{t3.1}
If ${\rm w}^*$-$\lim \mu_n =\mu_0$, then 
\begin{equation} \label{(e3.3)}
   \lim \sup d_p(\mu_n ; S) \leq d_p(\mu_0 ; S)
\end{equation}
for all $S$. 
\end{theorem}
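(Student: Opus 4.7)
The plan is to bound $d_p(\mu_n; S)$ from above by integrals that are automatically continuous in $\mu$ under weak* convergence, and then pass to the infimum. More precisely, for each fixed trigonometric $S$-polynomial $\tau$ the function $|1-\tau|^p$ is a bounded continuous function on $\Gamma$, so Definition \ref{d2.3} forces the integrals $\int |1-\tau|^p\, d\mu_n$ to converge to $\int |1-\tau|^p\, d\mu_0$; bounding $d_p(\mu_n;S)$ by these integrals and then optimising over $\tau$ gives the claim.

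First I would fix an arbitrary $\tau = \sum_j a_j e_{x_j} \in {\mathcal T}(S)$ and verify that $|1-\tau|^p$ belongs to the class of test functions used in the definition of weak* convergence. Continuity is automatic: each character $e_{x_j}$ is continuous on $\Gamma$, hence so is $\tau$, and since $t \mapsto t^p$ is continuous on $[0,\infty)$ for every $p \in (0,\infty)$, the composition $|1-\tau|^p$ is continuous. Boundedness follows from $|e_{x_j}(\gamma)| = 1$ for all $\gamma \in \Gamma$, which gives $|1-\tau| \leq 1 + \sum_j |a_j|$, and hence $|1-\tau|^p$ is bounded.

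Second, from ${\rm w}^*$-$\lim \mu_n = \mu_0$ and Definition \ref{d2.3} I obtain
\begin{equation*}
   \lim \int |1-\tau|^p \, d\mu_n \;=\; \int |1-\tau|^p \, d\mu_0 .
\end{equation*}
Since $d_p(\mu_n ; S) \leq \int |1-\tau|^p \, d\mu_n$ for every $n \in \mathbb N$ by the definition of the infimum in (\ref{(e3.1)}), taking the upper limit yields
\begin{equation*}
   \limsup \, d_p(\mu_n ; S) \;\leq\; \int |1-\tau|^p \, d\mu_0 .
\end{equation*}
As $\tau \in {\mathcal T}(S)$ was arbitrary, taking the infimum over all such $\tau$ on the right-hand side and invoking (\ref{(e3.1)}) once more delivers (\ref{(e3.3)}).

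There is no serious obstacle in this argument; it is a direct unpacking of the two infima and of the test-function class appearing in the definition of weak* convergence. The only points requiring a line of verification are the continuity and boundedness of $|1-\tau|^p$, both of which are immediate from the fact that $S$-polynomials are finite linear combinations of unimodular continuous characters. Note that the argument makes no use of the existence of the metric projection $\phi_{\mu_n,p}$, which is consistent with the statement being formulated purely in terms of the approximation distances $d_p(\mu_n;S)$ and $d_p(\mu_0;S)$.
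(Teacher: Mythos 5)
Your proof is correct and follows essentially the same route as the paper's: both rest on the observation that $|1-\tau|^p$ is a bounded continuous function for each fixed $\tau \in {\mathcal T}(S)$, so weak* convergence gives $\lim \int |1-\tau|^p\,d\mu_n = \int |1-\tau|^p\,d\mu_0$, after which one compares with the infima defining $d_p$. The only (immaterial) difference is the order of quantifiers — the paper first picks an $\varepsilon$-near-minimizer $\tau_\varepsilon$ for $\mu_0$ and then lets $n\to\infty$, whereas you fix an arbitrary $\tau$, pass to the $\limsup$, and take the infimum over $\tau$ last.
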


\begin{proof}
For arbitrarily fixed $\varepsilon \in (0, \infty )$ choose $\tau_\varepsilon \in{\mathcal T}$ such that $\int |1-\tau_\varepsilon |^p \, d\mu_0 < d_p(\mu_0) + \frac{\varepsilon}{2}$ . Since the function $|1-\tau_\varepsilon|^p$ is continuous and bounded, there exists $n_0 \in {\mathbb N}$ such that $\left| \int |1-\tau_\varepsilon|^p \, d\mu_0 - \int |1-\tau_\varepsilon|^p \, d\mu_n \right| < \frac{\varepsilon}{2}$ for any $n \geq n_0$. Hence, 
\begin{eqnarray*}
d_p(\mu_n) & \leq & \int |1-\tau_\varepsilon|^p \, d\mu_n \\
                  & \leq  & \int |1-\tau_\varepsilon|^p \, d\mu_0 + \left| \int |1-\tau_\varepsilon|^p \, d\mu_0 - \int |1-\tau_\varepsilon|^p \, d\mu_n \right| \\
                  & < & d_p(\mu_0)+ \varepsilon
\end{eqnarray*} 
for any $n \geq n_0$, which yields (\ref{(e3.3)}) by the arbitrarity of $\varepsilon > 0$. 
\end{proof}

To see that inequality (\ref{(e3.3)})  can be sharp consider the interpolation problem for weakly stationary sequences where $0$ is the only missing value. Thus, let $G=\mathbb Z$ and $S={\mathbb Z} \setminus \{ 0 \}$ for the moment. Defining $e_x(\gamma) := {\rm e}^{ix\gamma}$, $x \in \mathbb Z$, $\gamma \in [0, 2 \pi)$, the dual group $\Gamma$ of $\mathbb Z$ can be identified with the interval $[0,2 \pi )$, where the group operation is addition modulo $2 \pi$ and the set of all open subintervals together with all sets of the form $[0,a) \cup (b,2 \pi )$ form a basis of its topology. Let $\lambda$ be the normalized Lebesgue measure on $[0,2 \pi )$. Assume, that $\mu_k << \lambda$ and denote $w_k:=\frac{d\mu_k}{d\lambda}$, $k \in {\mathbb N}_0$. A well-known fact is that $d_2(\mu_k)= (\int w^{-1}_k \, d\lambda)^{-1}$ in case $w^{-1}_k \in L^1(\lambda)$, and $d_2(\mu_k)=0$ otherwise, in particular, in case when $w_k=0$ on a set of positive $\lambda$-measure, \cite[Thm. 3]{[17]}. 

\begin{example} \label{e3.2}
a) $\,$ If $w_0=1$, $w_n=1_{[\frac{1}{n},2 \pi )}$, then $\lim \| \mu_0-\mu_n\|=0$, $d_2(\mu_0)=1$ and $d_2(\mu_n)=0$. 

b) Let $w_0(\gamma)=\sqrt{\gamma}$ for $\gamma \in [0,2 \pi )$. For $a \in (0, \infty )$ set $w_n=\frac{a}{n} \cdot 1_{[0,\frac{1}{n})} + w_0 \cdot 1_{[\frac{1}{n}, 2 \pi )}$. So $\lim \| \mu_0-\mu_n \|=0$ and $d_2(\mu_0)=\frac{2 \pi}{2 \sqrt{2 \pi}}$, $d_2(\mu_n)=2 \pi [\frac{1}{a} + 2(\sqrt{2 \pi}-\sqrt{\frac{1}{n}})]^{-1}$, and hence, $\lim d_2(\mu_n)=2 \pi (\frac{1}{a}+2\sqrt{ 2 \pi})^{-1} < d_2(\mu_0)$. If $w_n=\frac{1}{n^2} \cdot 1_{[0,\frac{1}{n})} +w_0 \cdot 1_{[\frac{1}{n},2 \pi )}$, then $d_2(\mu_n)=2 \pi [n+2(\sqrt{2 \pi}-\sqrt{\frac{1}{n}})]^{-1}$, and consequently, $\lim d_2(\mu_n)=0$. Therefore, for any $b \in [0,d_2(\mu_0)]$ there exists a sequence $(\mu_n)$ with $\lim \|\mu_0-\mu_n \|=0$ and $\lim d_2(\mu_n)=b$. Note, that $(w_n)$ tends to $w_0$ even uniformly and that, additionally, $\lim \int \log \frac{w_0}{\min (w_0,w_n)} \, d\lambda = 0$. Compare this with Theorem \ref{t5.5}. 
\end{example}

The following corollary is a straightforward consequence of Theorem \ref{t3.1}. Its first assertion claims that as soon as the observations at the points of $S$ contain the whole (linear) information on the process, then the computed prediction error tends to be equal to the theoretical prediction error.

\begin{corollary} \label{c3.3}
Let $\rm w^*$-$\lim \mu_n = \mu_0$. 
\begin{itemize}
\item[(i)] $\,$ If $d_p(\mu_n ; S) = 0$, then (R1) is true. 
\item[(ii)] $\,$ If there exists a sequence $(c_n)$ of positive real numbers such that $\lim \inf c_n \geq 1$ and $c_n\mu_0 \leq \mu_n$, $n \in {\mathbb N}$,
 then (R1) is satisfied. 
\end{itemize}
\end{corollary}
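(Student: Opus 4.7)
The plan is to combine Theorem \ref{t3.1} (which already supplies the $\limsup$ direction under ${\rm w}^*$-convergence) with easy one-line arguments for the matching $\liminf$ direction in each case.

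For (i), I would read the hypothesis as $d_p(\mu_0;S)=0$ (which matches the narrative preceding the corollary about the observations carrying the whole linear information). Then Theorem \ref{t3.1} gives $\limsup d_p(\mu_n;S) \leq d_p(\mu_0;S) = 0$, and since $d_p(\mu_n;S) \geq 0$ by the very definition (\ref{(e3.1)}), the sequence $d_p(\mu_n;S)$ is squeezed to $0 = d_p(\mu_0;S)$, which is (R1).

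For (ii), the key observation is that the pointwise inequality of measures $c_n \mu_0 \leq \mu_n$ transfers to every non-negative integrand. In particular, for each $\tau \in {\mathcal T}(S)$,
\begin{equation*}
  \int |1-\tau|^p \, d\mu_n \;\geq\; c_n \int |1-\tau|^p \, d\mu_0 \;\geq\; c_n \, d_p(\mu_0;S).
\end{equation*}
Taking the infimum over $\tau \in {\mathcal T}(S)$ on the left gives $d_p(\mu_n;S) \geq c_n \, d_p(\mu_0;S)$. Passing to $\liminf$ and using the hypothesis $\liminf c_n \geq 1$ yields $\liminf d_p(\mu_n;S) \geq d_p(\mu_0;S)$. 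Coupling this with $\limsup d_p(\mu_n;S) \leq d_p(\mu_0;S)$ from Theorem \ref{t3.1} produces (R1).

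There is no genuine obstacle: both parts are immediate corollaries of Theorem \ref{t3.1} once one notices that a pointwise lower bound of measures gives a lower bound on every approximation integral, hence on the infimum. The only thing to be careful about is the non-negativity of $d_p$ in (i) and the order in which $\liminf$ and the infimum over $\tau$ are taken in (ii); since the inequality $d_p(\mu_n;S) \geq c_n d_p(\mu_0;S)$ holds for every fixed $n$, no interchange of limits is required.
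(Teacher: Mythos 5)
Your proof is correct and is exactly the argument the paper intends (the paper omits it, calling the corollary ``a straightforward consequence of Theorem \ref{t3.1}''): Theorem \ref{t3.1} supplies the $\limsup$ bound, non-negativity handles (i), and the monotonicity $d_p(\mu_n;S)\geq c_n\,d_p(\mu_0;S)$ handles (ii). You also correctly read the hypothesis of (i) as $d_p(\mu_0;S)=0$ (a typo in the statement), which is how the corollary is invoked later in the paper, e.g.\ in the proofs of Theorems \ref{t5.5} and \ref{t6.2}.
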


\begin{corollary} \label{c3.4}
Let $S \subseteq G \setminus \{ 0 \}$ be an arbitrary non-empty set and $\mu_0 \in \mathcal M$. The following two assertions are equivalent: 
\begin{itemize}
\item[(i)] For any sequence $(\mu_n)$ with $\lim \|\mu_0-\mu_n \|=0$, relation (R1) is true. 
\item[(ii)] For any sequence $(\mu_n)$ such that $\lim \|\mu_0-\mu_n \|=0$ and $\mu_n \leq \mu_0$, $n \in \mathbb N$, the inequality $d_p(\mu_0; S) \leq \lim \inf d_p(\mu_n ; S)$ is true.
\end{itemize}
\end{corollary}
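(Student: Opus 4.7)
The implication (i) $\Rightarrow$ (ii) is immediate: if $\lim d_p(\mu_n; S) = d_p(\mu_0; S)$ holds for every norm-convergent sequence, then in particular it holds for those additionally satisfying $\mu_n \leq \mu_0$, and the equality trivially yields $d_p(\mu_0; S) \leq \liminf d_p(\mu_n; S)$.

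For the reverse direction (ii) $\Rightarrow$ (i), my plan is to combine Theorem \ref{t3.1} with a pointwise-minimum truncation trick. Suppose $(\mu_n)$ satisfies $\lim \|\mu_0 - \mu_n\| = 0$. Since norm convergence implies weak* convergence, Theorem \ref{t3.1} yields
\[
  \limsup d_p(\mu_n ; S) \leq d_p(\mu_0 ; S),
\]
so it suffices to establish the matching lower bound. To this end I would introduce, for each $n$, the reference measure $\nu_n := \mu_0 + \mu_n$, densities $w_0^{(n)} := d\mu_0/d\nu_n$ and $w_n^{(n)} := d\mu_n/d\nu_n$, and define a new measure $\mu'_n$ by $d\mu'_n := \min(w_0^{(n)}, w_n^{(n)}) \, d\nu_n$.

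By construction $\mu'_n \leq \mu_0$ and $\mu'_n \leq \mu_n$, so monotonicity of the $L^p$-distance functional in $\mu$ gives $d_p(\mu'_n; S) \leq d_p(\mu_n; S)$. Moreover, writing $(w_0^{(n)} - w_n^{(n)})^+ := \max(w_0^{(n)} - w_n^{(n)}, 0)$, one checks via (\ref{e2.1}) that
\[
  \|\mu_0 - \mu'_n\| = \int (w_0^{(n)} - w_n^{(n)})^+ \, d\nu_n \leq \int |w_0^{(n)} - w_n^{(n)}| \, d\nu_n = \|\mu_0 - \mu_n\| \to 0,
\]
so $\mu'_n \in \mathcal{M}$ for all sufficiently large $n$ (since $\|\mu'_n\| \to \|\mu_0\| > 0$), and we may discard finitely many terms if needed. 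Applying (ii) to the sequence $(\mu'_n)$ yields $d_p(\mu_0; S) \leq \liminf d_p(\mu'_n; S) \leq \liminf d_p(\mu_n; S)$, which combined with the upper bound from Theorem \ref{t3.1} establishes (R1).

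The only genuine difficulty is verifying that the truncated sequence $(\mu'_n)$ satisfies the hypotheses of (ii); this reduces to the density-level computation above, which hinges on the identity for the total variation in terms of Radon-Nikodym derivatives recorded in (\ref{e2.1}) and the elementary observation that $|x - \min(x,y)| = (x-y)^+ \leq |x-y|$.
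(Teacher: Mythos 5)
Your proof is correct and follows essentially the same route as the paper: truncate by taking the pointwise minimum of the densities to get $\mu'_n \leq \mu_0$ with $\|\mu_0-\mu'_n\| \leq \|\mu_0-\mu_n\| \to 0$, apply (ii) to $(\mu'_n)$, use monotonicity $d_p(\mu'_n;S) \leq d_p(\mu_n;S)$, and combine with the upper bound from Theorem \ref{t3.1}. Your use of the per-$n$ reference measure $\nu_n = \mu_0+\mu_n$ instead of a single dominating $\nu$ is immaterial, and your remark that $\mu'_n$ could vanish for small $n$ (so that finitely many terms may need to be discarded to stay in $\mathcal M$) is a small point of care the paper's proof leaves implicit.
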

 
\begin{proof}
(i) $\to$ (ii) is clear. To prove the opposite conclusion (ii) $\to$ (i) choose $\nu \in \mathcal M$ such that $\mu_k << \nu$ and define $d\mu'_k:= \min \left( \frac{d\mu_0}{d\nu} , \frac{d\mu_k}{d\nu} \right) \, d\nu$, $k \in {\mathbb N}_0$. Since (ii) implies that $\lim \inf d_p(\mu_k,S) \geq \lim \inf d_p(\mu'_k,S) \geq d_p(\mu_0,S)$, assertion (i) follows from (\ref{(e3.3)}).
\end{proof}

Let $d_\infty(\mu ; S) := \inf \{ {\esssup}_\mu |1-\tau| : \tau \in {\mathcal T}(S) \}$, where ${\esssup}_\mu$ denotes the essential supremum with respect to the measure $\mu \in \mathcal M$. For the sake of completeness we show by example that for $p=\infty$ the inequality (\ref{(e3.3)}) is not true, in general. 

\begin{example} \label{e3.5}
Let $G$, $\Gamma$ and $\lambda$ be as in Example (\ref{e3.2}). Let $S$ be the singleton $\{ 1 \}$, $\mu_0:=\delta_0$, $\mu_n := \delta_0+\frac{1}{n} \lambda$. Clearly, $\lim \|\mu_0-\mu_n \|=0$ and $d_\infty (\mu_0)=\inf \{ |1-\alpha| : \alpha \in {\mathbb C} \}=0$. Moreover, 
\begin{eqnarray*}
d_\infty (\mu_n) & = & \inf \{ {\esssup}_\lambda \, |1-\alpha e_1| : \alpha \in {\mathbb C} \} \\
                        & = & \inf \{ \max \{ (1+|\alpha|^2 - 2\, {\rm Re}(\alpha {\rm e}^{i \gamma} ))^{\frac{1}{2}} : \gamma \in [0,2 \pi ) \} : \alpha \in {\mathbb C} \} \\
                        & = & \inf \{\max \{ |(1+|\alpha|^2 + 2|\alpha|)^{\frac{1}{2}} : \alpha \in {\mathbb C} \} \\
                        & = & 1 
\end{eqnarray*}
for any $n \in \mathbb N$. 
\end{example}

Our next example gives an impression to what extend $\lim \, d_p(\mu_n,\mu_0 ; S)$ can differ from $d_p( \mu_0 ; S)$ if $\lim \|\mu_0-\mu_n \| =0$

\begin{example} \label{e3.6}
Let $G$, $\Gamma$ and $\lambda$ be as in Example \ref{e3.2}. Since for $d\mu= w \, d\lambda$ one has $\phi_{\mu,2}=1-(\int w^{-1} \, d\lambda)^{-1}w^{-1}$ if $w^{-1} \in L^1(\lambda)$ and $\phi_{\mu,2}(S)=1$ otherwise, the following results can be obtained by simple computations:
\newline
a) $\,$ If $w_0=1$ and $w_n(\gamma) = \gamma \cdot 1_{[0,\frac{1}{n})}(\gamma) + 1_{[\frac{1}{n},2 \pi )}(\gamma)$, $\gamma \in [0,2 \pi)$, then $d_2(\mu_0)=1$, $\phi_{n,2}=1$ and $d_2(\mu_n,\mu_0)=0$.
\newline
b) $\,$ If $w_0(\gamma)=\gamma$ and $w_n(\gamma)= \frac{1}{n} \cdot 1_{[0, \frac{1}{n})}(\gamma) + \gamma \cdot 1_{[\frac{1}{n}, 2 \pi - \frac{1}{n})}(\gamma) + (2\pi - \gamma) \cdot 1_{[2\pi -\frac{1}{n}, 2\pi)}(\gamma)$, $\gamma \in [0,2\pi )$, then $d_2(\mu_0)=0$, $\phi_{n,2} \not\in L^2(\mu_0)$, and hence, $d_2(\mu_n,\mu_0)=\infty$.
\newline
c) $\,$ Let $a \in (0,\infty)$, $w_0(\gamma)=a\gamma$ for $\gamma \in [0,2\pi )$, $w_n(\gamma)= \frac{a}{n^2} \cdot 1_{[0,\frac{1}{n})} + w_0 \cdot 1_{[\frac{1}{n},2 \pi )}$. We get $d_2(\mu_0)=0$, $d_2(\mu_n,\mu_0)= 2 \pi a (\frac {n^2}{2} + \log 2 \pi + \log n)(n + \log 2 \pi + \log n)^{-2}$, what tends to $\pi a$ if $n \to \infty$. Thus, for any $b \in [0,\infty ]$ there exists a measure  $\mu_0$ and a sequence of measures $(\mu_n)$ with $\lim \| \mu_0-\mu_n \|=0$, $d_2(\mu_0)=0$ and $\lim d_2(\mu_n,\mu_0)=b$. 
\newline
d) $\,$ If $a \in (0,\infty)$, $w_0(\gamma)=\sqrt{\gamma}$ for $\gamma \in [0,2 \pi )$, $w_n(\gamma)= a n^{-\frac{3}{4}} \cdot 1_{[0,\frac{1}{n})} + w_0 \cdot 1_{[\frac{1}{n}, 2 \pi )}$, then $d_2(\mu_0)=\frac{\sqrt{2\pi}}{2}$, $d_2(\mu_n,\mu_0)= \left[\frac{1}{2\pi}(\frac{1}{3a^2} + 2(\sqrt{2 \pi}-\sqrt{\frac{1}{n}}))\right] \cdot \left[\frac{1}{2 \pi}(\frac{1}{4n^{\frac{1}{4}}} + 2(\sqrt{2\pi}-\sqrt{\frac{1}{n}}))\right]^{-2}$, what tends to $\frac{1}{2}(\frac{1}{3a^2} + 2 \sqrt{2\pi})$ if $n \to \infty$. Consequently, for any $b \in [d_2(\mu_0), \infty )$ there exists a sequence $(\mu_n)$ with $\lim \|\mu_0-\mu_n\|=0$ and $\lim d_2(\mu_n,\mu_0)=b$. 
\end{example}

Let us discuss relations between (R1), (R2) and (R3). Obviously, (R3) yields (R2) without any convergence condition on the sequence $(\mu_n)$. The following assertion is a straightforward consequence of Lemma \ref{l2.9}.

\begin{proposition} \label{p3.7}
   If $\mu_0$-$\lim \phi_{n,p}(S) = \phi_{0,p}(S)$, then (R2) yields (R3). 
\end{proposition}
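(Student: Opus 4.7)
The plan is simply to apply Lemma \ref{l2.9} to the sequence of residuals $f_n := 1 - \phi_{n,p}(S)$ (for $n \in {\mathbb N}$) against the limit $f_0 := 1 - \phi_{0,p}(S)$, with the measure taken to be $\mu_0$ and the exponent $p \in (0,\infty)$ the same one as in the statement. The key observation is that both hypotheses of Lemma \ref{l2.9}, namely the convergence of $L^p$-norms (\ref{e2.6}) and the convergence in measure, are available essentially for free from the hypothesis and from (R2).

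First, I would rewrite (R2) in the form demanded by (\ref{e2.6}). By definition (\ref{(e3.2)}) one has $d_p(\mu_n,\mu_0;S) = \int |1 - \phi_{n,p}|^p\,d\mu_0 = \int |f_n|^p\,d\mu_0$, while $d_p(\mu_0;S) = \int |1 - \phi_{0,p}|^p\,d\mu_0 = \int |f_0|^p\,d\mu_0$. Hence (R2) is exactly the statement
\[
    \lim \int |f_n|^p \, d\mu_0 = \int |f_0|^p \, d\mu_0 \, ,
\]
i.e. hypothesis (\ref{e2.6}) for the measure $\mu_0$ and the functions $f_k$. Note here we implicitly use that the metric projections $\phi_{n,p}(S)$ and $\phi_{0,p}(S)$ exist and that $\mu_0 << \mu_n$ for all $n$, as required for (R2) and (R3) to even be well-defined.

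Second, the hypothesis $\mu_0$-$\lim \phi_{n,p}(S) = \phi_{0,p}(S)$ immediately gives $\mu_0$-$\lim f_n = f_0$, since translation of a sequence by a fixed measurable function preserves convergence in measure (for any $\varepsilon > 0$, the set $\{|f_0 - f_n| > \varepsilon\}$ coincides pointwise with $\{|\phi_{0,p} - \phi_{n,p}| > \varepsilon\}$).

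With both hypotheses of Lemma \ref{l2.9} in hand, its conclusion (\ref{e2.8}) yields
\[
   \lim \int |f_0 - f_n|^p \, d\mu_0 = \lim \int |\phi_{0,p}(S) - \phi_{n,p}(S)|^p \, d\mu_0 = 0 \, ,
\]
which is precisely (R3). There is no real obstacle to overcome; the proposition is essentially a relabelling of Lemma \ref{l2.9}, with the only point of care being the unwrapping of the definition of $d_p(\mu_n,\mu_0;S)$ and the verification that both projections live in $L^p(\mu_0)$ so that all integrals make sense.
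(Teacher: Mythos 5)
Your argument is correct and is exactly the route the paper intends: the paper states Proposition 3.7 as ``a straightforward consequence of Lemma \ref{l2.9}'', applied precisely as you do, with $f_k = 1-\phi_{k,p}(S)$ and the measure $\mu_0$. Your unwrapping of (R2) into hypothesis (\ref{e2.6}) and of the convergence-in-measure hypothesis into $\mu_0$-$\lim f_n=f_0$ is the whole content of the proof.
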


We give further sufficient conditions for the implication (R2) $\to$ (R3) that are closer to prediction theory in spirit. Proposition \ref{p3.8} tells us that (R3) follows from (R2) if the observations at the points of $S$ contain the whole information on the process. The proof of Theorem \ref{t3.9} is based on the uniform rotundity of the $L^p(\mu)$-norm, $p \in (1,\infty)$. 

\begin{proposition} \label{p3.8}
   If $d_p(\mu_0 ; S)=0$ then (R2) yields (R3). 
\end{proposition}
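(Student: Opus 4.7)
The key observation is that under the hypothesis $d_p(\mu_0; S) = 0$, the metric projection $\phi_{0,p}$ must equal the constant $1$ in the almost-everywhere sense, so that (R2) and (R3) reduce to the \emph{same} limit equation.

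My plan would be as follows. First I would verify that $\phi_{0,p}$ exists and equals $1$ $\mu_0$-a.e.: since $d_p(\mu_0; S) = 0$, there is a sequence $\tau_k \in \mathcal T(S)$ with $\int |1 - \tau_k|^p \, d\mu_0 \to 0$, so $1$ lies in the closure $[\mathcal T(S)]_{\mu_0, p}$; the function $1$ itself therefore realizes the infimum (with value zero), and any other minimizer $\phi \in [\mathcal T(S)]_{\mu_0,p}$ would satisfy $\int |1-\phi|^p \, d\mu_0 = 0$, hence $\phi = 1$ $\mu_0$-a.e. So the metric projection is uniquely determined and equals $1$.

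Having established $\phi_{0,p} = 1$ $\mu_0$-a.e., the proof concludes in one line: by definition,
\begin{equation*}
  \int |\phi_{0,p} - \phi_{n,p}|^p \, d\mu_0 = \int |1 - \phi_{n,p}|^p \, d\mu_0 = d_p(\mu_n, \mu_0; S),
\end{equation*}
so (R2), which asserts that the right-hand side tends to $d_p(\mu_0; S) = 0$, is literally the statement (R3).

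There is essentially no obstacle here; the only thing worth pausing over is the well-posedness of the metric projection on $\mathcal T(S)$ when $p \in (0,1]$, which is not a priori guaranteed by the paper's general setup but is automatic in the present degenerate situation because the constant $1$ itself is a (hence, up to $\mu_0$-null sets, \emph{the}) minimizer.
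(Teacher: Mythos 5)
Your proposal is correct and follows essentially the same route as the paper: the paper's proof likewise observes that (R2) together with $d_p(\mu_0;S)=0$ gives $\lim\int|1-\phi_{n,p}|^p\,d\mu_0=0$ and that $\phi_{0,p}=1$, so the two limit statements coincide. Your extra verification that $1$ is indeed the (unique) metric projection, including for $p\in(0,1]$, is a careful touch the paper leaves implicit, but it does not change the argument.
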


\begin{proof}
If (R2) is satisfied then $\lim \int |1-\phi_{n,p}|^p \, d\mu_0 = d_p(\mu_0) = 0$, hence, $\lim \phi_{n,p}=1=\phi_{0,p}$ with respect to the metric of $L^p(\mu_0)$.
\end{proof}

\begin{theorem}  \label{t3.9}
   Let $p \in (1,\infty)$. If $\phi_{n,p}(S) \in [{\mathcal T}(S)]_{\mu_0,p}$ for all $n \in {\mathbb N}$, then (R3) follows from (R2).
\end{theorem}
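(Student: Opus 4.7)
The plan is to exploit the uniform rotundity of $L^p(\mu_0)$ for $p\in(1,\infty)$, combined with the fact that, by hypothesis, every $\phi_{n,p}$ belongs to the same closed subspace $[\mathcal{T}(S)]_{\mu_0,p}$ on which $\phi_{0,p}$ is the metric projection of the constant function $1$. First I would dispose of the degenerate case $d_p(\mu_0;S)=0$ via Proposition \ref{p3.8}. So assume $d_p(\mu_0;S)>0$, write $f_n:=1-\phi_{n,p}$ and $f_0:=1-\phi_{0,p}$ in $L^p(\mu_0)$ (legitimate because $\phi_{n,p}\in[\mathcal{T}(S)]_{\mu_0,p}\subseteq L^p(\mu_0)$), and set $r:=\|f_0\|_{L^p(\mu_0)}=d_p(\mu_0;S)^{1/p}>0$. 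The optimality of $\phi_{0,p}$ forces $\|f_n\|_{L^p(\mu_0)}\ge r$, and (R2) is just the statement $\|f_n\|_{L^p(\mu_0)}\to r$.

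Next I would pass to midpoints. Since $[\mathcal{T}(S)]_{\mu_0,p}$ is a linear subspace, $\tfrac{1}{2}(\phi_{n,p}+\phi_{0,p})$ again lies in it, so minimality of $\phi_{0,p}$ gives
\[
r \;\le\; \Bigl\|1-\tfrac{1}{2}(\phi_{n,p}+\phi_{0,p})\Bigr\|_{L^p(\mu_0)} \;=\; \Bigl\|\tfrac{f_n+f_0}{2}\Bigr\|_{L^p(\mu_0)} \;\le\; \tfrac{1}{2}\bigl(\|f_n\|_{L^p(\mu_0)}+r\bigr) \;\longrightarrow\; r,
\]
so that $\|\tfrac{1}{2}(f_n+f_0)\|_{L^p(\mu_0)}\to r$ as well.

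The concluding step is an invocation of the classical consequence of uniform rotundity (Clarkson's inequalities): in a uniformly convex Banach space, if $\|x_n\|\to r$, $\|y\|=r>0$, and $\|\tfrac{1}{2}(x_n+y)\|\to r$, then $\|x_n-y\|\to 0$. Applied to $x_n=f_n$ and $y=f_0$ this yields $\|f_n-f_0\|_{L^p(\mu_0)}\to 0$, i.e.\ $\int|\phi_{0,p}-\phi_{n,p}|^p\,d\mu_0\to 0$, which is (R3). The main technical point is this last inference; the standard proof normalizes $\tilde x_n:=f_n/\|f_n\|$ and $\tilde y:=f_0/r$ on the unit sphere, uses $\|f_n\|\to r>0$ to check $\|\tfrac{1}{2}(\tilde x_n+\tilde y)\|\to 1$, and then applies the definition of uniform rotundity contrapositively. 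This renormalization is the only subtlety beyond the subspace arithmetic above.
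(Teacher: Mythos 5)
Your proof is correct. It rests on the same two pillars as the paper's argument --- uniform rotundity of the $L^p(\mu_0)$-norm for $p\in(1,\infty)$, and the observation that the midpoint of two elements of $[{\mathcal T}(S)]_{\mu_0,p}$ stays in that subspace, so its distance from $1$ is still at least $d_p(\mu_0;S)^{1/p}$ --- but you deploy them more directly. The paper compares $f_n=1-\phi_{n,p}$ with $f_j=1-\phi_{j,p}$ for pairs of indices, shows the normalized sequence is Cauchy, and then must invoke completeness and the uniqueness of the metric projection to identify the limit with $\phi_{0,p}$. You instead compare each $f_n$ with the fixed element $f_0=1-\phi_{0,p}$, which collapses the argument to the standard uniform-convexity lemma (if $\|x_n\|\to r$, $\|y\|=r>0$ and $\|\tfrac12(x_n+y)\|\to r$ then $\|x_n-y\|\to 0$) and removes the need for the Cauchy step and the final identification of the limit. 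The only hypotheses you consume are the existence of $\phi_{0,p}$ (guaranteed for $p\in(1,\infty)$ and presumed in (R2)/(R3)) and its minimality over $[{\mathcal T}(S)]_{\mu_0,p}$; the renormalization subtlety you flag at the end is handled exactly as you describe, using $\|f_n\|\to r>0$. Both routes also correctly dispose of the case $d_p(\mu_0;S)=0$ via Proposition \ref{p3.8}. Your version is the shorter and arguably cleaner of the two.
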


\begin{proof}
Let $p \in (1,\infty)$. For $f \in L^p(\mu_0)$ define $\|f\|_0 := \left(\int |f|^p \, d\mu_0 \right)^\frac{1}{p}$. Recall that $\| \cdot \|_0$ is a uniformly rotund norm, cf.~\cite[p. 441]{[23]}. By Proposition \ref{p3.8} we can assume that $d_p(\mu_0) > 0$ and set $f_n := (d_p(\mu_0))^{-\frac{1}{p}} (1-\phi_{n,p})$. Our first goal is to show that
\begin{equation} \label{(e3.4)}
    \lim_{n,j \to \infty} \left\| \frac{1}{2} \left( \frac{f_n}{\|f_n \|_0} + \frac{f_j}{\|f_j\|_0} \right) \right\|_0 = 1 \, .
\end{equation}
Using elementary properties of norms we obtain
\begin{equation} \label{(e3.5)}
\begin{aligned}
  1 & \geq \left\| \frac{1}{2} \left( \frac{f_n}{\|f_n \|_0} + \frac{f_j}{\|f_j\|_0} \right) \right\|_0 = \frac{1}{2} \left\| f_n -\left(f_n - \frac{f_n}{\| f_n \|_0} \right) +f_j - \left( f_j - \frac{f_j}{\| f_j \|_0} \right) \right\| \\ 
     & \geq  \frac{1}{2} \|f_n+f_j\|_0 - \frac{1}{2} \left\| f_n - \frac{f_n}{\| f_n \|_0} \right\|_0 - \frac{1}{2} \left\| f_j - \frac{f_j}{\| f_j \|_0} \right\|_0 \, . 
\end{aligned}
\end{equation}
From $\lim \| f_n \|_0 =(d_p(\mu_0))^{-\frac{1}{p}} \lim \|1-\phi_{n,p} \|_0 = (d_p(\mu_0))^{-\frac{1}{p}} \lim (d_p(\mu_n, \mu_0))^{\frac{1}{p}} = 1$ we conclude
\begin{equation} \label{(e3.6)}
   \lim \left\| f_n - \frac{f_n}{\| f_n \|_0} \right\|_0 =0 \, ,
\end{equation}
and hence,
\begin{equation}  \label{(e3.7)}
   \lim_{n,j \to \infty} \frac{1}{2} \| f_n+f_j \| \leq 1
\end{equation}
by (\ref{(e3.5)}). On the other hand $\frac{1}{2} (f_n+f_j) =(d_p(\mu_0))^{-\frac{1}{p}} (1-\frac{\phi_{n,p}+\phi_{j,p}}{2} )$. and since $\frac{\phi_{n,p}+\phi_{j,p}}{2} \in [{\mathcal T}(S)]_{\mu_0,p}$, we obtain $\| \frac{1}{2} (f_n+f_j) \|_0 = (d_p(\mu_0))^{-\frac{1}{p}} \|1-\frac{\phi_{n,p}+\phi_{j,p}}{2} ) \|_0 \geq (d_p(\mu_0))^{-\frac{1}{p}} (d_p(\mu_0))^{\frac{1}{p}} = 1$. Taking into account (\ref{(e3.7)}) we can conclude that
\begin{equation}  \label{(e3.8)}
    \lim_{n,j \to \infty}  \| \frac{1}{2} (f_n+f_j) \|_0 = 1
\end{equation}
and an application of (\ref{(e3.5)}), (\ref{(e3.6)}) and (\ref{(e3.8)}) yields (\ref{(e3.4)}), what means that for $\delta \in (0,\infty)$, there exists $n_0 \in \mathbb N$  such that $\left\| \frac{1}{2} \left( \frac{f_n}{\|f_n \|_0} + \frac{f_j}{\|f_j\|_0} \right) \right\|_0 > 1-\delta$ for all $n,j \geq n_0$. If it would exist a positive number $\varepsilon$ and a subsequence $(f_{n_r})_{r \in \mathbb N}$ such that 
\begin{equation} \label{(e3.9)}
   \left\| \left( \frac{f_{n_r}}{\|f_{n_r} \|_0} - \frac{f_{n_{r+1}}}{\|f_{n_{r+1}}\|_0} \right) \right\|_0 > \varepsilon \, , \,\, r \in \mathbb N \, ,
\end{equation}
then for any $\delta \in (0,\infty)$ there would exist elements $f_{n_r}$ and $f_{n_{r+1}}$ satisfying (\ref{(e3.9)}) and the inequality $\left\| \frac{1}{2} \left( \frac{f_{n_r}}{\|f_{n_r} \|_0} - \frac{f_{n_{r+1}}}{\|f_{n_{r+1}}\|_0} \right) \right\|_0 > 1-\delta$, a contradiction to the uniform rotundity of $\| \cdot \|_0$. Therefore, the sequence $\left( \frac{f_n}{\| f_n\|_0} \right)$ is a Cauchy sequence with respect to the norm $\| \cdot \|_0$. Since $\lim\| f_n \|_0 = 1$, the sequence $(\phi_{n,p})$ is a Cauchy sequence as well. Its limit $\lim\phi_{n,p} =: \phi_p$ satisfies the equality 
\begin{eqnarray*}
\| 1-\phi_p\|^p_0 & = & \lim \| 1-\phi_{n,p}\|^p_0 \\
                           & = & \lim d_p(\mu_n,\mu_0) \\
                           & = & d_p(\mu_0) \, ,
\end{eqnarray*}
what implies that $\phi_p=\phi_{0,p}$ since $\phi_p\in [{\mathcal T}]_{\mu_0,p}$ and the metric projection is unique.
\end{proof}

Note, that in case $p=2$ the assertion of the preceding  theorem can be easily derived from the Pythagorean theorem. Indeed, if $(1-\phi_{n,2})$ is the orthogonal sum of $(1-\phi_{0,2})$ and $(\phi_{0,2}-\phi_{n,2})$, one has $\int |1-\phi_{n,2}|^2 \, d\mu_0 = \int |1-\phi_{0,2}|^2 \, d\mu_0 + \int |\phi_{0,2}-\phi_{n,2}|^2 \, d\mu_0$, showing that (R2) implies (R3). 

Summarizing our extensions of Theorem \ref{t1.1} we state the following theorem. Its assertions fail if weak* convergence is replaced by convergence in measure, cf.~Example \ref{e5.2}.

\begin{theorem} \label{t3.10}
  Let $\rm w^*$-$\lim \mu_n =\mu_0$ and $\mu_n \geq c_n \mu_0$ for a sequence $(c_n)$ of positive real numbers with $\lim \inf c_n \geq 1$. Relation (R1) is satisfied for all sets $S$ and all $p \in (0,\infty)$. If, additionally, $\mu_0 << \mu_n$, $n \in \mathbb N$, and $\phi_{n,p}(S)$ exists, then (R2) is true. If $\phi_{0,p}(S)$ exists as well and $p \in (1,\infty)$, then (R3) is satisfied.
\end{theorem}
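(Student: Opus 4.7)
The plan is to prove (R1), (R2), and (R3) in sequence, exploiting the two-sided domination $c_n\mu_0 \leq \mu_n$ (equivalently $\mu_0 \leq c_n^{-1}\mu_n$) to compare $\mu_n$- and $\mu_0$-integrals.

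For (R1), Theorem \ref{t3.1} already yields $\lim\sup d_p(\mu_n;S) \leq d_p(\mu_0;S)$ from weak* convergence. For the reverse direction, the hypothesis $\mu_n \geq c_n\mu_0$ implies for each $\tau \in \mathcal{T}(S)$ that $\int |1-\tau|^p\,d\mu_n \geq c_n \int |1-\tau|^p\,d\mu_0 \geq c_n\,d_p(\mu_0;S)$. Taking the infimum over $\tau$ gives $d_p(\mu_n;S) \geq c_n\,d_p(\mu_0;S)$, and since $\lim\inf c_n \geq 1$ we conclude $\lim\inf d_p(\mu_n;S) \geq d_p(\mu_0;S)$.

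The pivot for both (R2) and (R3) is the inclusion $\phi_{n,p}(S) \in [\mathcal{T}(S)]_{\mu_0,p}$. To see this, pick $\tau_k \in \mathcal{T}(S)$ with $\int |\phi_{n,p}(S)-\tau_k|^p\,d\mu_n \to 0$ (such a sequence exists by definition of $[\mathcal{T}(S)]_{\mu_n,p}$); since $\mu_0 \leq c_n^{-1}\mu_n$, also $\int |\phi_{n,p}(S)-\tau_k|^p\,d\mu_0 \to 0$, which proves the inclusion and yields in particular $\phi_{n,p}(S) \in L^p(\mu_0)$. This inclusion immediately forces the lower bound
$$d_p(\mu_n,\mu_0;S) = \int |1-\phi_{n,p}(S)|^p\,d\mu_0 \geq d_p(\mu_0;S).$$
For the matching upper bound, the same domination gives
$$c_n\,d_p(\mu_n,\mu_0;S) \leq \int |1-\phi_{n,p}(S)|^p\,d\mu_n = d_p(\mu_n;S),$$
so invoking (R1) and $\lim\inf c_n \geq 1$ yields $\lim\sup d_p(\mu_n,\mu_0;S) \leq (\lim\sup c_n^{-1})\,d_p(\mu_0;S) \leq d_p(\mu_0;S)$. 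Together these prove (R2).

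Finally, (R3) drops out immediately: under the additional hypotheses $p \in (1,\infty)$ and existence of $\phi_{0,p}(S)$, the inclusion $\phi_{n,p}(S) \in [\mathcal{T}(S)]_{\mu_0,p}$ established above is exactly the assumption Theorem \ref{t3.9} requires in order to derive (R3) from (R2). The only step carrying real weight is the inclusion $\phi_{n,p}(S) \in [\mathcal{T}(S)]_{\mu_0,p}$; everything else is routine monotonicity of integrals combined with Theorems \ref{t3.1} and \ref{t3.9}.
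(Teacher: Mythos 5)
Your proposal is correct and follows essentially the same route as the paper: (R1) via Theorem \ref{t3.1} plus the lower bound from $\mu_n \geq c_n\mu_0$ (this is exactly Corollary \ref{c3.3}(ii)), (R2) via the two-sided comparison together with the inclusion $\phi_{n,p}(S) \in [\mathcal{T}(S)]_{\mu_0,p}$, and (R3) via Theorem \ref{t3.9}. The only difference is that you spell out the approximating-sequence argument for that inclusion, which the paper merely asserts.
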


\begin{proof} 
The first assertion is Corollary \ref{c3.3}(ii). To prove the second assertion note first that $\lim \sup d_p(\mu_n,\mu_0) \leq \lim \sup \frac{1}{c_n} d_p(\mu_n) \leq d_p(\mu_0)$ by Theorem \ref{t3.1}. Since $\phi_{n,p} \in [{\mathcal T}]_{\mu_0,p}$, we have $\int |1-\phi_{n,p}|^p \, d\mu_0 \geq \int |1-\phi_{0,p}|^p \, d\mu_0$, and hence, \linebreak[4]
$\lim \inf d_p(\mu_n,\mu_0) \geq d_p(\mu_0)$. So (R2) follows and the third assertion is a consequence of Theorem \ref{t3.9}.
\end{proof}

Corollary \ref{c3.3} and the Examples \ref{e3.6}(b) and (c) show that there exist a set $S$, a measure $\mu_0$ and a sequence $(\mu_n)$ such that $\lim\|\mu_0-\mu_n\|=0$, $\lim d_2(\mu_n ; S)=d_2(\mu_0 ; S)$ and $\lim d_2(\mu_n,\mu_0 ; S)\not= d_2(\mu_0 ; S)$. Thus, norm convergence of $(\mu_n)$ is not sufficient for the implications  (R1) $\to$ (R2) and (R1) $\to$ (R3). Example \ref{e7.5}(a) and Theorem \ref{t7.6}(i)  will reveal that the condition $\rm m$-$\lim \mu_n =\mu_0$ is not sufficient for the implication (R3) $\to$ (R1), whereas $\lim \|\mu_0-\mu_n\|=0$ will be a sufficient condition, cf.~Corollary \ref{c3.12}.

\begin{lemma} \label{l3.11}
Let $\mu_0 << \mu_n$, $n \in \mathbb N$. Assume that the set $S$, $p \in (0,\infty)$ and $\mu_k$ are such that the metric projection $\phi_{k,p}(S)$ exists, $k \in {\mathbb N}_0$. Assume further that the following conditions are satisfied: 
\begin{itemize}
\item{(i)} $\rm m$-$\lim \mu_n = \mu_0$,
\item{(ii)} $\lim \sup d_p(\mu_n ; S) \leq d_p(\mu_0 ; S)$,
\item{(iii)} $\mu_0$-$\lim |1-\phi_{n,p}(S)| = |1-\phi_{0,p}(S)|$.
\end{itemize}
Then (R1) is true.
\end{lemma}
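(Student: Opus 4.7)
The plan is to combine hypothesis (ii), which already yields $\limsup d_p(\mu_n;S) \leq d_p(\mu_0;S)$, with a matching Fatou-type lower bound $\liminf d_p(\mu_n;S) \geq d_p(\mu_0;S)$ derived from (i) and (iii) by extracting an almost-everywhere convergent sub-subsequence.

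First I would fix a common dominating measure $\nu \in \mathcal M$ with $\mu_k \ll \nu$ for every $k \in {\mathbb N}_0$ (e.g.\ $\nu := \mu_0 + \sum_{n} 2^{-n}\|\mu_n\|^{-1}\mu_n$) and set $w_k := d\mu_k/d\nu$. By (i) and Definition \ref{d2.5}, $w_n \to w_0$ in $\nu$-measure. To prove $\liminf d_p(\mu_n;S) \geq d_p(\mu_0;S)$ I would pick a subsequence $(n_k)$ realising the $\liminf$ and then, using that convergence in measure yields almost-everywhere convergence along a further subsequence, thin $(n_k)$ twice so that along the resulting sub-subsequence both $w_n \to w_0$ $\nu$-a.e.\ and $|1-\phi_{n,p}| \to |1-\phi_{0,p}|$ $\mu_0$-a.e., the latter being equivalent to $\nu$-a.e.\ convergence on $\{w_0 > 0\}$.

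On this sub-subsequence the non-negative functions $g_n := |1-\phi_{n,p}|^p w_n$ then converge $\nu$-a.e.\ on $\{w_0 > 0\}$ to $|1-\phi_{0,p}|^p w_0$, while on $\{w_0 = 0\}$ the target itself vanishes; consequently $\liminf g_n \geq |1-\phi_{0,p}|^p w_0$ $\nu$-a.e., and Fatou's lemma delivers
\[
d_p(\mu_0;S) = \int |1-\phi_{0,p}|^p w_0 \, d\nu \leq \int \liminf g_n \, d\nu \leq \liminf \int g_n \, d\nu = \liminf d_p(\mu_n;S),
\]
which combined with (ii) gives (R1).

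The main obstacle to anticipate is the behaviour of $g_n$ on the exceptional set $\{w_0 = 0\}$: condition (iii) says nothing about $|1-\phi_{n,p}|$ there, and although the assumption $\mu_0 \ll \mu_n$ forces $\{w_n = 0\} \subseteq \{w_0 = 0\}$ $\nu$-a.e., it does not prevent $w_n$ from being positive on $\{w_0 = 0\}$. What rescues the argument is that the target $|1-\phi_{0,p}|^p w_0$ itself vanishes on this set, so only the trivial pointwise estimate $\liminf g_n \geq 0$ is required for Fatou; attempting to replace this one-sided bound by an $L^p$-convergence argument in the spirit of Lemma \ref{l2.9} would already presuppose precisely the conclusion (R1) we are trying to establish.
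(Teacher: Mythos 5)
Your proposal is correct and follows essentially the same route as the paper's proof: extract sub-subsequences converting the convergence in measure from (i) and (iii) into almost-everywhere convergence, then apply Fatou's lemma to $|1-\phi_{n,p}|^p w_n$ to obtain the matching lower bound $\liminf d_p(\mu_n;S)\geq d_p(\mu_0;S)$ (the paper phrases this as a contradiction argument starting from a subsequence violating (R1), which is equivalent). Your explicit treatment of the exceptional set $\{w_0=0\}$, where (iii) gives no pointwise information but the Fatou minorant vanishes anyway, is a point the paper passes over silently, and it is handled correctly.
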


\begin{proof}
Let $\nu \in \mathcal M$ be such that $\mu_k << \nu$. Set $w_k:=\frac{d\mu_k}{d\nu}$, $k \in {\mathbb N}_0$. If (R1) would not be true, by (ii) there would exist a subsequence $(\mu_{n'})$ satisfying 
\begin{equation}   \label{(e3.10)}
    \lim_{n'\to \infty} d_p(\mu_{n'} ; S) < d_p(\mu_0 ; S) \, .
\end{equation}
From (iii) we derive the existence of another subsequence $(n'')$ of $(n')$ with
\begin{equation} \label{(e3.11)}
   \lim_{n'' \to \infty} | 1-\phi_{n'',p}(S) |^p =|1-\phi_{0,p}(S) |^p \quad \mu_0{\rm -a.e.}
\end{equation}
and (i) yields the existence of a subsequence $(n''')$ of $(n'')$ such that
\begin{equation} \label{(e3.12)}
   \lim_{n''' \to \infty} w_{n'''} = w_0 \quad \nu{\rm -a.e.} 
\end{equation}
Taking into account (\ref{(e3.11)}) and (\ref{(e3.12)}) and applying Fatou's lemma, we get 
\begin{eqnarray*}
   \lim\inf_{n''' \to \infty} d_p(\mu_{n'''} ; S) & =     & \lim \inf_{n''' \to \infty} \int |1-\phi_{n''',p}(S)|^p w_{n'''} \, d\nu \\
                                                                 & \geq & \int |1-\phi_{0,p}(S)|^p w_{0} \, d\nu = d_p(\mu_0) \, ,
\end{eqnarray*}
a contradiction to (\ref{(e3.10)}).
\end{proof}

\begin{corollary} \label{c3.12}
   If $\lim \| \mu_0-\mu_n \|=0$, then (R3) implies (R1).
\end{corollary}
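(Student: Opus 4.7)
The approach is to reduce the claim to Lemma \ref{l3.11} by checking its three hypotheses (i)--(iii). Once all three are verified, the lemma produces (R1) immediately. The input we have is norm convergence $\|\mu_0 - \mu_n\| \to 0$ together with (R3) (which already tacitly guarantees $\mu_0 \ll \mu_n$ and the existence of all required metric projections); the plan is to show that these two ingredients are exactly enough to deliver conditions (i)--(iii).

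Hypothesis (i), namely $\mathrm{m}$-$\lim \mu_n = \mu_0$, I would deduce by choosing $\nu \in \mathcal{M}$ with $\mu_k \ll \nu$ for all $k \in \mathbb{N}_0$, invoking (\ref{e2.1}) to rewrite $\|\mu_0 - \mu_n\|$ as $\int |w_0 - w_n|\, d\nu$, and then passing from this $L^1(\nu)$-convergence to convergence in $\nu$-measure of $(w_n)$ via Chebyshev's inequality. This is precisely the definition of $\mathrm{m}$-$\lim \mu_n = \mu_0$.

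Hypothesis (ii), namely $\lim\sup d_p(\mu_n;S) \leq d_p(\mu_0;S)$, would follow directly from Theorem \ref{t3.1} once I observe that norm convergence implies weak* convergence via the trivial bound $\bigl|\int f\, d(\mu_n - \mu_0)\bigr| \leq \|f\|_\infty\, \|\mu_n - \mu_0\|$, valid for any bounded continuous $f$ on $\Gamma$.

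Hypothesis (iii), namely $\mu_0$-$\lim |1 - \phi_{n,p}| = |1 - \phi_{0,p}|$, is where (R3) enters: from $\int |\phi_{0,p} - \phi_{n,p}|^p\, d\mu_0 \to 0$, Chebyshev's inequality applied to the non-negative function $|\phi_{0,p} - \phi_{n,p}|^p$ yields $\phi_{n,p} \to \phi_{0,p}$ in $\mu_0$-measure. Continuity of $z \mapsto |1 - z|$ on $\mathbb{C}$ then transfers this convergence to the absolute values, giving (iii). I do not foresee a genuine obstacle: the whole argument is essentially bookkeeping that packages norm convergence of the measures and $L^p(\mu_0)$-convergence of the projections into the three hypotheses of Lemma \ref{l3.11}. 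The one point deserving a word of care is that Chebyshev's inequality must be applied to the nonnegative quantity $|\phi_{0,p} - \phi_{n,p}|^p$ rather than to $|\phi_{0,p} - \phi_{n,p}|$, so that the argument covers the full range $p \in (0,\infty)$ allowed by Lemma \ref{l3.11}, including $p \in (0,1)$ where $|\cdot|^p$ is no longer subadditive.
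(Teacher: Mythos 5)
Your proof is correct and is exactly the route the paper intends: Corollary \ref{c3.12} is stated without proof immediately after Lemma \ref{l3.11} precisely because norm convergence yields both hypothesis (i) (via Markov's inequality applied to $\int|w_0-w_n|\,d\nu$) and, through weak* convergence and Theorem \ref{t3.1}, hypothesis (ii), while (R3) yields hypothesis (iii). Your additional remark about applying Markov's inequality to $|\phi_{0,p}-\phi_{n,p}|^p$ so the argument covers $p\in(0,1)$ is a sensible precaution and fully consistent with the paper.
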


\begin{lemma} \label{l3.13}
   Let $S$, $p$ and $\mu_k$ be such that $\mu_0 << \mu_k$, $\phi_{k,p}(S)$ exists, the sequence $({\esssup}_{\mu_0} | \phi_{n,p}(S) |)$ is bounded and $\mu_0$-$\lim \phi_{n,p}(S) = \phi_{0,p}(S)$. Then (R3) is true. If, additionally, $\lim \|\mu_0-\mu_n \|=0$ then (R1) is also true.
\end{lemma}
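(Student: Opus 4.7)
The plan is to establish the two assertions separately, treating (R3) by a direct bounded-convergence argument and (R1) by reduction to Lemma \ref{l3.11}.

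For (R3), I set $M := \sup_n \esssup_{\mu_0} |\phi_{n,p}(S)|$, which is finite by hypothesis. Since $\mu_0$-$\lim \phi_{n,p}(S) = \phi_{0,p}(S)$, extracting a $\mu_0$-a.e.\ convergent subsequence forces $|\phi_{0,p}(S)| \leq M$ $\mu_0$-a.e., so $h_n := |\phi_{n,p}(S) - \phi_{0,p}(S)|^p \leq (2M)^p$ $\mu_0$-a.e., and $\mu_0$-$\lim h_n = 0$ by continuity of $(z,w)\mapsto |z-w|^p$. Because $\mu_0$ is finite, the sub-subsequence principle applies: every subsequence of $(h_n)$ contains a further a.e.-convergent sub-subsequence on which the dominated convergence theorem with dominant $(2M)^p$ yields limit zero, and therefore $\int h_n\,d\mu_0 \to 0$. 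This is (R3). Equivalently one may quote Lemma \ref{l2.9} with the $p$ there equal to $1$.

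For the second assertion, assume additionally $\lim \|\mu_0 - \mu_n\| = 0$ and verify the three hypotheses of Lemma \ref{l3.11}. Fixing any $\nu \in \mathcal{M}$ with $\mu_k \ll \nu$, equation (\ref{e2.1}) and Markov's inequality give $\nu(|w_n - w_0| > \varepsilon) \leq \varepsilon^{-1}\|\mu_n - \mu_0\| \to 0$, which is $\rm m$-$\lim \mu_n = \mu_0$ and supplies (i). The estimate $|\int f\, d(\mu_n - \mu_0)| \leq \|f\|_\infty \|\mu_n - \mu_0\|$ for bounded continuous $f$ yields $\rm w^*$-$\lim \mu_n = \mu_0$, whence Theorem \ref{t3.1} produces (ii). Finally, (iii) is immediate from $\mu_0$-$\lim \phi_{n,p}(S) = \phi_{0,p}(S)$ and the continuity of $z \mapsto |1-z|$. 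Lemma \ref{l3.11} then delivers (R1).

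The only delicate point I anticipate is that the essential-supremum hypothesis is with respect to $\mu_0$ alone, so it does not control $\phi_{n,p}(S)$ on parts of $\Gamma$ where $\mu_n$ places mass but $\mu_0$ does not. Consequently the naive estimate of $d_p(\mu_n;S) - d_p(\mu_n,\mu_0;S) = \int |1-\phi_{n,p}|^p\,d(\mu_n - \mu_0)$ by $(1+M)^p\,\|\mu_n - \mu_0\|$ is not legitimate. Going through Lemma \ref{l3.11} sidesteps this entirely, since its $\liminf$ step integrates only against $\mu_0$ and its $\limsup$ step is furnished by the general Theorem \ref{t3.1}.
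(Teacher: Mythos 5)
Your proof is correct and follows essentially the same route as the paper: (R3) via the uniform $\mu_0$-essential bound on $|\phi_{0,p}-\phi_{n,p}|^p$ and dominated (bounded) convergence along sub-subsequences, and (R1) via Lemma \ref{l3.11} --- which is exactly the content of Corollary \ref{c3.12}, the result the paper itself cites for the second assertion. The only blemish is the parenthetical claim that one could ``equivalently quote Lemma \ref{l2.9} with $p=1$'': applied to $h_n$ with limit $0$, hypothesis (\ref{e2.6}) of that lemma is precisely the conclusion you are after, so that citation would be circular; fortunately it is redundant, since your direct bounded-convergence argument is complete.
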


\begin{proof}
The suppositions of the lemma imply that the sequence ${\esssup}_{\mu_0} | \phi_{0,p}(S) - \phi_{n,p}(S) |^p)$ is a bounded sequence and (R3) follows from Lebesgue's dominated convergence theorem. The second assertion then follows from Corollary \ref{c3.12}.
\end{proof}

The strong boundedness condition on the sequence $(\phi_{n,p}(S))$ is the main obstacle to apply the preceding lemma. For the by far most important case  $p=2$, now we state a result which somewhat widens its field of applications. Let $S$ be an arbitrary non-empty subset of $G \setminus \{ 0 \}$ and $x \in G \setminus \{ 0 \}$ be such that $e_x \not\in [{\mathcal T}(S)]_{\mu_0,2}$. Denote by $P_k$ and $Q_k$ the orthoprojections in $L^2(\mu_k)$ onto $[{\mathcal T}(S)]_{\mu_k,2}$  and $[{\mathcal T}(S \cup \{x\})]_{\mu_k,2}$, resp., $k \in {\mathbb N}_0$. 

\begin{proposition} \label{p3.14}
   Let $\mu_0 << \mu_n$, $n \in {\mathbb N}_0$, and $\lim\|\mu_0 - \mu_n \|=0$. Assume that for all $y \in G$, the function $P_k e_y$ has the following two properties:
\begin{itemize}
\item{(i)} The sequence $({\esssup}_{\mu_n} |P_n e_y |)$ is bounded,
\item{(ii)} $\lim \int |P_0 e_y - P_n e_y |^2 \, d\mu_0=0$ .
\end{itemize}
Then for all $y \in G$, the function $Q_k e_y$ has the same properties.
\end{proposition}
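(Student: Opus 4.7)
The plan is to express $Q_k e_y$ as a rank-one update of $P_k e_y$ in the direction $e_x - P_k e_x$, then transport the convergence properties (i), (ii) known for $P_k$ to $Q_k$ via this explicit formula. Since $e_x \notin [\mathcal T(S)]_{\mu_0,2}$ by assumption, we have $\|e_x - P_0 e_x\|_{\mu_0} > 0$, and by (ii) applied with $y=x$, $P_n e_x \to P_0 e_x$ in $L^2(\mu_0)$, so $\|e_x-P_n e_x\|_{\mu_0}$ stays bounded and bounded away from $0$ for large $n$. For every such $n$ and $k=0$, since $[\mathcal T(S\cup\{x\})]_{\mu_k,2}$ is the orthogonal sum in $L^2(\mu_k)$ of $[\mathcal T(S)]_{\mu_k,2}$ and the one-dimensional space spanned by $e_x-P_k e_x$, one has
\begin{equation*}
  Q_k e_y \;=\; P_k e_y + \alpha_k (e_x - P_k e_x), \qquad
  \alpha_k := \frac{\int e_y\,\overline{(e_x-P_k e_x)}\,d\mu_k}{\int |e_x-P_k e_x|^2\,d\mu_k}.
\end{equation*}
(For the finitely many $n$ where the denominator vanishes in $\mu_n$, one has $Q_n=P_n$ and there is nothing to prove.)

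The core step is to show $\lim \alpha_n = \alpha_0$. For any function $g$ bounded in $L^\infty(\mu_n)$ uniformly in $n$, one has $|\int g\,d(\mu_n-\mu_0)|\leq \|g\|_{L^\infty(\mu_n)}\,\|\mu_n-\mu_0\|\to 0$. Applied to $g=|e_x-P_n e_x|^2$ (bounded uniformly in $n$ in $L^\infty(\mu_n)$ by (i) for $e_x$, since $|e_x|=1$) this gives
\begin{equation*}
  \int |e_x-P_n e_x|^2\,d\mu_n
  = \int |e_x-P_n e_x|^2\,d\mu_0 + o(1)
  \;\longrightarrow\; \int |e_x-P_0 e_x|^2\,d\mu_0,
\end{equation*}
the last passage by (ii) applied to $e_x$, combined with the $L^2(\mu_0)$-continuity of the norm. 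The same argument, with $g = e_y\,\overline{(e_x - P_n e_x)}$ in place of $g = |e_x-P_n e_x|^2$, shows that the numerator of $\alpha_n$ converges to that of $\alpha_0$. The denominator limit is strictly positive, so $\alpha_n\to \alpha_0$ and $(\alpha_n)$ is bounded.

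Verifying (i) for $Q_n e_y$ is then immediate from the rank-one formula: $|Q_n e_y|\leq |P_n e_y| + |\alpha_n|\,(1+|P_n e_x|)$ in $L^\infty(\mu_n)$, and each summand is bounded in $n$ by hypothesis (i) applied to $e_y$ and $e_x$ and by boundedness of $\alpha_n$. For (ii), decompose
\begin{equation*}
  Q_n e_y - Q_0 e_y = (P_n e_y - P_0 e_y) + (\alpha_n-\alpha_0)(e_x-P_n e_x) + \alpha_0(P_0 e_x - P_n e_x),
\end{equation*}
and observe that each term tends to $0$ in $L^2(\mu_0)$: the first and third by (ii) applied to $e_y$ and $e_x$, the second because $\alpha_n-\alpha_0\to 0$ while $\|e_x-P_n e_x\|_{L^2(\mu_0)}$ is bounded (it converges to $\|e_x-P_0 e_x\|_{L^2(\mu_0)}$ by (ii) for $e_x$).

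The main subtlety is that (ii) provides only $L^2(\mu_0)$-information about $P_n e_x$, while the definition of $\alpha_n$ involves integration against $\mu_n$; this mismatch is bridged precisely by the uniform $L^\infty(\mu_n)$-bound in (i), which, together with $\|\mu_n-\mu_0\|\to 0$, lets us replace $d\mu_n$ by $d\mu_0$ in the limit. Everything else is a standard continuity argument for orthogonal projections onto finite-codimension extensions.
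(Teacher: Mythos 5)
Your proof is correct and follows essentially the same route as the paper: the identical rank-one update $Q_k e_y = P_k e_y + \alpha_k(e_x-P_k e_x)$, with the $\mu_n$-versus-$\mu_0$ integration gap bridged exactly as in the paper's Lemma 3.15, namely by the uniform $L^\infty(\mu_n)$ bound from (i) together with $\|\mu_0-\mu_n\|\to 0$ and the absolute continuity $|\mu_0-\mu_n|\ll\mu_n$. The only cosmetic difference is that you carry out that bridging inline (splitting off $\int g\,d(\mu_n-\mu_0)$ and then using $L^2(\mu_0)$-convergence) rather than quoting it as a separate lemma for sequences converging in $\mu_0$-measure.
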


To prove Proposition \ref{p3.14} we show the following lemma:

\begin{lemma} \label{l3.15}
   Let $\mu_0 << \mu_n$ and $\lim \| \mu_0 - \mu_n \|=0$. If $(f_n )$ denotes a sequence of complex-valued measurable functions such that 
\begin{equation}  \label{(e3.13)}
   \sup \{ {\esssup}_{\mu_n} |f_n| : n \in {\mathbb N} \} =: c < \infty
\end{equation}
and $\rm \mu_0$-$\lim f_n =: f_0$ exists, then $\lim \int f_n \, d\mu_n = \int f_0 \, d\mu_0$.
\end{lemma}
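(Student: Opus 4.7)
The plan is to split the quantity of interest by inserting an auxiliary integral, namely
\begin{equation*}
\int f_n\,d\mu_n - \int f_0\,d\mu_0 \;=\; \Bigl(\int f_n\,d\mu_n - \int f_n\,d\mu_0\Bigr) + \Bigl(\int f_n\,d\mu_0 - \int f_0\,d\mu_0\Bigr),
\end{equation*}
and to show that each of the two brackets on the right tends to zero. A key preliminary observation is that, because $\mu_0 \ll \mu_n$, the $\mu_n$-a.e.\ bound $|f_n|\le c$ transfers to a $\mu_0$-a.e.\ bound $|f_n|\le c$; in particular, both integrals in each bracket are finite.

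For the first bracket I would choose $\nu\in\mathcal M$ dominating all the $\mu_k$ (for instance $\nu=\mu_0+\sum 2^{-n}\mu_n/\|\mu_n\|$) and set $w_k=d\mu_k/d\nu$. Outside the set $\{w_n+w_0>0\}$ the integrand $f_n(w_n-w_0)$ vanishes, and on this set we have $|f_n|\le c$ for $\nu$-almost every point, so by (\ref{e2.1})
\begin{equation*}
\left|\int f_n\,d\mu_n-\int f_n\,d\mu_0\right| \;=\; \left|\int f_n(w_n-w_0)\,d\nu\right| \;\le\; c\int|w_n-w_0|\,d\nu \;=\; c\,\|\mu_n-\mu_0\|,
\end{equation*}
which goes to zero by hypothesis.

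For the second bracket I would exploit the combination of convergence in $\mu_0$-measure and uniform boundedness. Since $|f_n|\le c$ $\mu_0$-a.e., the same bound passes to $f_0$ by picking any $\mu_0$-a.e.\ convergent subsequence. Now I would argue by the standard subsequence principle: given an arbitrary subsequence of $(f_n)$, convergence in measure produces a further subsequence converging $\mu_0$-a.e.\ to $f_0$; the latter is bounded by $2c$ in absolute value against the $\mu_0$-a.e.\ limit, so Lemma~\ref{l2.9} with $p=1$ (applied to $|f_0-f_{n_k}|$, whose $L^1(\mu_0)$-norms automatically satisfy (\ref{e2.6}) by bounded convergence) gives $\int |f_0-f_{n_k}|\,d\mu_0\to 0$. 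Since every subsequence contains such a further subsequence, the full sequence satisfies $\int|f_n-f_0|\,d\mu_0\to 0$, and in particular $\int f_n\,d\mu_0\to\int f_0\,d\mu_0$.

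The argument is essentially bookkeeping; the only subtle point I expect to need care with is the transfer of the essential supremum bound from $\mu_n$ to $\mu_0$, which is where the assumption $\mu_0\ll\mu_n$ is used decisively. Combining the two vanishing bounds yields the claim.
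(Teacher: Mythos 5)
Your proof is correct and follows essentially the same route as the paper: the identical two-term decomposition (the paper writes the second bracket as $\int f_n\,d(\mu_0-\mu_n)$ and bounds it by $c\,\|\mu_0-\mu_n\|$ using that $|\mu_0-\mu_n|\ll\mu_n$, which is just your density computation in different notation), together with the transfer of the essential-supremum bound via $\mu_0\ll\mu_n$. Your handling of the term $\int(f_n-f_0)\,d\mu_0$ by the subsequence principle is in fact slightly more careful than the paper's bare appeal to Lebesgue's dominated convergence theorem, since only convergence in $\mu_0$-measure (not $\mu_0$-a.e.\ convergence) is assumed.
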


\begin{proof}
We have
\begin{equation} \label{(e3.14)}
   \left| \int f_0 \, d\mu_0 - \int f_n \, d\mu_n \right| \leq \int | f_0-f_n| \, d\mu_0+ \int | f_n| \, d|\mu_0-\mu_n| \,\, ,
\end{equation}
where $|\mu_0-\mu_n|$ denotes the variation of the real-valued measure $\mu_0-\mu_n$. Since $\mu_0 << \mu_n$ condition (\ref{(e3.13)}) implies that $\sup \{ {\esssup}_{\mu_0} | f_n | : n \in {\mathbb N}\}\leq c$. Thus, if $n \to \infty$ the first summand at the right-hand side of (\ref{(e3.14)}) tends to zero by Lebesgue's dominated convergence theorem. To see that the second  summand tends to zero as well, note first that $|\mu_0-\mu_n| << \mu_n$. In fact, if $B \in\mathcal B$ with $\mu_n(B)=0$ then $\mu_n(C)=\mu_0(C)=0$ for any $C \in B$, $C \subseteq B$, what yields $|\mu_0-\mu_n|(B)=0$. Therefore, $\sup \{ {\esssup}_{|\mu_0-\mu_n|} | f_n | : n \in {\mathbb N} \} \leq c$ by (\ref{(e3.13)}), and hence, $\lim \int | f_n | \, d|\mu_0-\mu_n| \leq c \lim \| \mu_0-\mu_n \| =0$.
\end{proof}

{\it Proof of Proposition \ref{p3.14}.} Denote by $( \cdot , \cdot )_k$ and $\| \cdot \|_k$ the scalar product and the norm, resp., of $L^2(\mu_k)$. For $y \in G$, under the assumption $\| e_x -P_k e_x \| \not= 0$ we can write
\begin{equation} \label{(e3.15)}
   Q_k e_y := P_k e_y + \left( e_y , \frac{e_x-P_k e_x}{\| e_x - P_k e_x \|_k} \right)_k  \frac{e_x-P_k e_x}{\| e_x - P_k e_x \|_k} \, . 
\end{equation}
The claim of the proposition can be derived from (\ref{(e3.15)}) applying Lemma \ref{l3.15} twice. First, setting $\phi_k:= | e_x-P_k e_x |^2$ in Lemma \ref{l3.15} , we get $\lim \| e_x- P_n e_x \|_n = \| e_x -P_0 e_x \|_0$, in particular, $\| e_x- P_n e_x \|_n \not=0$ for any sufficiently large $n$ since the assumption $e_x \not\in [{\mathcal T}(S)]_{\mu_0,2}$ yields $\| e_x -P_0 e_x \|_0 > 0$. Now, setting $f_k := e_y \frac{\overline{(e_x -P_k e_x)}}{\| e_x - P_k e_x \|_k}$ in Lemma \ref{l3.15}, we obtain $\lim \left( e_y , \frac{e_x -P_n e_x}{\|e_x - P_ne_x \|_n}\right)_n =  \left( e_y , \frac{e_x -P_0 e_x}{\|e_x - P_0e_x \|_0}\right)_0$, and consequently, $\sup \{ {\esssup}_{\mu_n} |Q_n e_y | : n \in {\mathbb N} \}< \infty$ by condition (i) and (\ref{(e3.15)}). Finally, $\lim \int |Q_0 e_y -Q_n e_y |^2 \, d\mu_0 = 0$ follows from condition (ii) and (\ref{(e3.15)}).
$\Box$  \linebreak[4]

We conclude the present section with a few remarks on the estimated error of the theoretical prediction $d_p(\mu_o , \mu_n):= \int |1-\phi_{0,p}|^p \, d\mu_n$, of course, presuming that $\phi_{0,p}$ exists and $\mu_n << \mu_0$, $n \in {\mathbb N}$. From the mere definition of weak* convergence we can derive that, in case $\phi_{0,p}$ is continuous and bounded, then $\lim d_p(\mu_0,\mu_n)=d_p(\mu_0)$ for every sequence $(\mu_n)$ satisfying $\rm w^*$-$\lim \mu_n = \mu_0$. Similarly, if $\phi_{0,p} \in L^\infty (\mu_0)$, then $\lim d_p(\mu_0, \mu_n)=d_p(\mu_0)$ for any sequence $(\mu_n)$ with $\rm w$-$\lim \mu_n = \mu_0$. Conversely, it $\phi_{0,p} \not\in L^\infty(\mu_0)$, there exists a sequence $(\mu_n)$ such that $\lim \| \mu_0-\mu_n \|=0$ and  $\lim d_p(\mu_0,\mu_n)=\infty$. To give an example, set $B_n := \{ \gamma \in \Gamma : | \phi_{0,p}(\gamma)| \geq n^2 \}$, $b_n:= \mu_0(B_n) >0$, and note that $\lim b_n = 0$. Defining $d_{\mu_n} := ((n^p b_n)^{-1} \cdot 1_{B_n}+ 1_{\Gamma \setminus B_n}) \, d\mu_0$, we obtain $\lim\| \mu_0 - \mu_n \| \leq \lim (1+(n^pb_n)^{-1})b_n=0$ and $\lim \int | 1-\phi_{0,p}|^p \, d\mu_n \geq \lim (n^2-1)^p (n^pb_n)^{-1} b_n =\infty$. It is also worth mentioning that if the observation at the points of $S$ give full information on the process, i.e. if $d_p(\mu_0 ; S)=0$, or equivalently, $\phi_{0,p}(S)=1$, then $\phi_{0,p}(S)=1$ $\mu$-a.e., since $\mu_n << \mu_0$. Thus, if $d_p(\mu_0;S)=0$, then $d_p(\mu_0,\mu_n ;S)=0$ always. Conversely, if $d_p(\mu_0 ;S) >0$, under a slight technical proviso on $\mu_0$ one can construct a sequence $(\mu_n)$ satisfying $\rm m$-$\lim \mu_n=\mu_0$ and $\lim d_p(\mu_0,\mu_n;S) > d_p(\mu_0;S)$. To do this note first that there exist $B_n \in  {\mathcal B}$ and $c \in (0,\infty)$ such that $\mu_0(B) >0$ and $|1-\phi_{0,p}(S)|^p \geq c$ $\mu_0$-a.e. on $B$. If we assume now that there exists a sequence $(B_n)$ of Borel subsets of $B$ with $\mu_0(B_n) >0$ and $\lim\mu_0(B_n)=0$, we can set $d\mu_n := \left( \frac{1}{c \mu_0(B_n)} \cdot 1_{B_n} + 1_{\Gamma \setminus B_n} \right) \, d\mu_0$ and compute $\lim \int | 1-\phi_{0,p}(S)|^p \, d\mu_n \geq \lim (1+\int_{\Gamma \setminus B_n} |1-\phi_{0,p}(S) |^p \, d\mu_n) =1+d_p(\mu_0;S)$.


\section{Interpolation of one missing value}

Throughout this section, $S$ denotes the subset $S=G\setminus \{ 0 \}$ of an LCA group $G$, and the dependence on $S$ will not be indicated in most cases. 

\begin{lemma} \label{l4.1}
   If $G$ is not discrete, then $[{\mathcal T}]_{\mu,p} = L^p(\mu)$ for all $\mu \in \mathcal M$ and $p \in (0,\infty )$.
\end{lemma}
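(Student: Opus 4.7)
My strategy is to reduce the claim to the well-known density of the full trigonometric polynomial algebra $\mathcal{T}(G)$ (i.e., with $e_0 = 1$ included) in $L^p(\mu)$, by first establishing that the constant function $1$ already lies in $[\mathcal{T}(S)]_{\mu,p}$. Once this is proved, the inclusion $\mathcal{T}(G) \subseteq [\mathcal{T}(S)]_{\mu,p}$ is automatic, since $e_y \in \mathcal{T}(S)$ whenever $y \neq 0$. The equality $[\mathcal{T}(S)]_{\mu,p} = L^p(\mu)$ then follows.

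The heart of the proof is producing, for every $\eta > 0$, an $x \in G \setminus \{0\}$ with $\int |1 - e_x|^p\,d\mu < \eta$. I would exploit that $G = \hat{\Gamma}$ carries the compact-open topology, so $x \to 0$ in $G$ is equivalent to $e_x \to 1$ uniformly on compact subsets of $\Gamma$. Non-discreteness of $G$ says that $0$ is not an isolated point, so every neighborhood of $0$ meets $G \setminus \{0\}$. Concretely, given $\varepsilon > 0$ and a compact $K \subseteq \Gamma$, there exists $x \in G \setminus \{0\}$ with $\sup_{\gamma \in K} |1 - e_x(\gamma)| < \varepsilon$. Choosing $K$ by inner regularity of $\mu$ so that $\mu(\Gamma \setminus K) < \eta/2^{p+1}$, and then $\varepsilon$ small enough, the bound
\[
\int |1 - e_x|^p\,d\mu \;\leq\; \varepsilon^p \mu(K) + 2^p \mu(\Gamma \setminus K)
\]
(using $|1-e_x| \leq 2$) becomes less than $\eta$. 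Thus $1 \in [\mathcal{T}(S)]_{\mu,p}$, and consequently $\mathcal{T}(G) \subseteq [\mathcal{T}(S)]_{\mu,p}$.

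It then remains to invoke density of $\mathcal{T}(G)$ in $L^p(\mu)$. For $p \in [1,\infty)$ I would use Hahn-Banach together with injectivity of the Fourier transform on $\mathcal{M}$: any continuous linear functional on $L^p(\mu)$ annihilating every $e_x$, $x \in G$, is represented by some $h \in L^q(\mu)$ (or $h \in L^\infty(\mu)$ when $p=1$), and the bounded complex measure $\bar h\,d\mu$ then has identically vanishing Fourier transform, forcing $h = 0$ $\mu$-a.e. by the uniqueness theorem. For $p \in (0,1)$ the finiteness of $\mu$ yields a continuous embedding $L^1(\mu) \hookrightarrow L^p(\mu)$ by H\"older, and one reduces to the case $p=1$ by truncating $f \in L^p(\mu)$ to $f \cdot 1_{\{|f|\le N\}}$ (convergent in $L^p$ by dominated convergence) and then approximating the truncation by a trigonometric polynomial in $L^1(\mu)$.

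The main obstacle is the first step: translating the purely topological hypothesis ``$G$ is not discrete'' into the quantitative $L^p(\mu)$-approximation of $1$ by nontrivial characters. The argument visibly breaks down when $G$ is discrete, since $\{0\}$ is then open in $G$ and no $e_x$ with $x \neq 0$ can approach $1$ uniformly on $\Gamma = \hat G$; this is consistent with $[\mathcal{T}(S)]_{\mu,p}$ being, in general, a proper subspace of $L^p(\mu)$ in the discrete setting (as already seen for the Haar measure on a compact $\Gamma$, where distinct characters are $L^2$-orthogonal).
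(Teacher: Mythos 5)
Your proof is correct and follows essentially the same route as the paper: both arguments approximate $1$ by a single character $e_x$, $x\neq 0$, using the compact-open topology on $G=\hat\Gamma$, non-discreteness of $G$, and inner regularity of $\mu$ to control $\mu(\Gamma\setminus K)$. The only difference is that you spell out the final density of $\mathcal{T}(G)$ in $L^p(\mu)$ (via Hahn--Banach and Fourier uniqueness for $p\ge 1$, and truncation for $p<1$), which the paper compresses into ``the assertion follows.''
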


\begin{proof}
Since $G$ is the dual group of its dual group $\Gamma$, the family of subsets $U(K,\delta):= \{ x \in G : | e_0(\gamma)-e_x(\gamma)|< \delta \,\, {\rm for} \,\, {\rm all} \,\, \gamma \in K\}$, where $K$ runs through the compact subsets of $\Gamma$ and $\delta$ through the positive real numbers constitutes a basis of neighbourhoods of the zero element $0$ of $G$. Let $\varepsilon \in (0,\infty )$. By the regularity of the measure $\mu$, there exists a compact subset $K \in \Gamma$ satisfying $\mu(\Gamma \setminus K) < \frac{\varepsilon}{2^{p+1}}$. Choose $\delta < \left( \frac{\varepsilon}{2 \mu(K)} \right)^\frac{1}{p}$ and obtain $\int |1-e_x| \, d\mu < \varepsilon$ for all $x \in U(K,\delta )$. Since $G$ is assumed not to be discrete, there exists an $x \in U(K,\delta)$ which is different from $0$. The assertion follows.
\end{proof}

 According to the preceding lemma we can and shall assume in the present section that the group $G$ is discrete. Thus, $\Gamma$ is compact and $\lambda$ denotes its normalized Haar measure. Moreover, since $d_p(\mu)=0$ for all $\mu \in \mathcal M$ and $p\in (0,1)$, cf.~\cite[Thm.~4.6]{[12]}, it is sufficient to study the case $p \in [1,\infty)$. As a third simplification we are concerned with absolutely continuous measures only: $d\mu = w \, d\lambda$. For if $d\mu = w \, d\lambda + \mu_\sigma$, where $\mu_\sigma$ is the singular part of $\mu$, then
\begin{equation}  \label{(e4.1)}
   d_p(\mu) = d_p(w \, d\lambda) \, , \,\, p \in (0,\infty) \, .
\end{equation}
If $p \in (1,\infty)$, relation (\ref{(e4.1)}) is a consequence of Theorem 4.1 and Lemma 4.3 of \cite{[13]}. Note, that an analogous result to that of Theorem 4.1 of \cite{[13]}, which was stated there for $p\in (1,\infty)$, is true for $p\in (0,1]$ since Lemma 2.3 of \cite{[13]} which is needed for its proof as well as the condition $L^p(\mu_\sigma) \subseteq [{\mathcal T}]_{\mu,p}$ of \cite[(4.1)]{[13]} remain true for $p \in (0,1]$. 

Let $p\in (1,\infty)$ and $q$ its conjugate number, i.e. $\frac{1}{p}+\frac{1}{q}=1$. The following results were established by Weron \cite{[31]}. If $w^{-\frac{q}{p}} \in L^1(\lambda)$, the metric projection $\phi_{\mu,p}$ is equal to
\begin{equation}  \label{(e4.2)}
    \phi_{\mu,p}= 1- \left( \int w^{-\frac{q}{p}} \, d\lambda \right)^{-1} w^{-\frac{q}{p}}
\end{equation}
which yields
\begin{equation}   \label{(e4.3)}
   d_p(\mu) = \left( \int w^{-\frac{q}{p}} \, d\mu \right)^{-\frac{p}{q}} \, .
\end{equation}
If $w^{-\frac{q}{p}} \not\in L^1(\mu)$, in particular, if $w=0$ on a set of positive $\lambda$-measure, then $\phi_{\mu,p}= 1$ and $d_p(\mu)=0$. As was mentioned in \cite{[19]}, one has
\begin{equation}    \label{(e4.4)}
   d_1(\mu)= {\essinf}_\lambda w \, .
\end{equation}
Therefore, the metric projection $\phi_{\mu,1}$ is equal to $1$ if and only if $\essinf_\lambda w = 0$. Let $B:= \{\gamma \in \Gamma :  w(\gamma)=\essinf_\lambda w \}$.

\begin{proposition} \label{p4.2}
   Let $\essinf_\lambda w > 0$. If $\lambda(B)>0$, there exist infinitely many functions $\psi \in [{\mathcal T}]_{\mu,1}$ satisfying
\begin{equation}  \label{(e4.5)}
   \int |1-\psi | \, d\mu = d_1(\mu) \, .
\end{equation}
In case $\lambda(B)=0$ such a function does not exist.
\end{proposition}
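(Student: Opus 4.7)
The plan is to identify $[{\mathcal T}]_{\mu,1}$ explicitly via $L^1$--$L^\infty$ duality and then to read off the precise equality conditions in the chain of inequalities that yields (\ref{(e4.4)}). Set $c := \essinf_\lambda w > 0$. First I would establish
\begin{equation*}
   [{\mathcal T}]_{\mu,1} = \left\{ \psi \in L^1(\mu) : \int \psi \, d\lambda = 0 \right\}.
\end{equation*}
Since $\mu$ is finite and $w \geq c$ $\lambda$-a.e., $(L^1(\mu))^* = L^\infty(\mu) = L^\infty(\lambda)$, and by Hahn--Banach the closure $[{\mathcal T}]_{\mu,1}$ is the pre-annihilator of ${\mathcal T}^\perp \subseteq L^\infty(\mu)$. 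For $h \in L^\infty(\mu)$ and $x \in S = G\setminus\{0\}$, the quantity $\int h\,e_x \, d\mu = \int (hw) e_x \, d\lambda$ is a Fourier coefficient of $hw \in L^1(\lambda)$; its vanishing on all of $S$, together with the injectivity of the Fourier transform on the compact group $\Gamma$, forces $hw$ to be constant $\lambda$-a.e., and hence ${\mathcal T}^\perp = {\mathbb C}\cdot(1/w)$. The pre-annihilator of this one-dimensional subspace is precisely the zero-$\lambda$-mean hyperplane displayed above.

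Next, for any $\psi \in [{\mathcal T}]_{\mu,1}$ I would use the splitting
\begin{equation*}
   \int |1-\psi| \, d\mu = \int |1-\psi|(w-c) \, d\lambda + c \int |1-\psi| \, d\lambda \geq c \left| \int (1-\psi) \, d\lambda \right| = c = d_1(\mu),
\end{equation*}
where the final equality uses $\int \psi \, d\lambda = 0$. Equality holds throughout if and only if both $|1-\psi|(w-c) = 0$ $\lambda$-a.e.\ (so $\psi = 1$ $\lambda$-a.e.\ on $\Gamma \setminus B$) and $|1-\psi| = 1-\psi$ $\lambda$-a.e.\ (so $f := 1-\psi$ is real and non-negative); the zero-mean constraint then pins down $\int_B f \, d\lambda = 1$.

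From this characterization both claims follow. When $\lambda(B) > 0$, any non-negative $f \in L^1(\lambda)$ supported in $B$ with $\int f \, d\lambda = 1$ produces $\psi := 1-f \in L^1(\mu)$ (since $\int f \, d\mu = c \int f \, d\lambda = c < \infty$), and $\psi \in [{\mathcal T}]_{\mu,1}$ saturates the inequality; infinitely many such $f$ exist, for instance $f = (\lambda(B'))^{-1} 1_{B'}$ for the infinitely many Borel subsets $B' \subseteq B$ of positive $\lambda$-measure (available whenever $\Gamma$ is infinite, because then Haar measure has no atoms). When $\lambda(B) = 0$, the first equality condition forces $\psi = 1$ $\lambda$-a.e.\ on all of $\Gamma$, contradicting $\int \psi \, d\lambda = 0$. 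The main obstacle I anticipate is the duality-based description of $[{\mathcal T}]_{\mu,1}$; once that is in hand, the remainder is a direct application of the triangle inequality together with the pointwise bound $w \geq c$.
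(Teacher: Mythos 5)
Your proof is correct, and while its overall skeleton matches the paper's (first identify $[{\mathcal T}]_{\mu,1}$ with the zero-$\lambda$-mean hyperplane of $L^1(\mu)$, then analyse when equality holds in the estimate giving $d_1(\mu)=\essinf_\lambda w$), both of its main sub-arguments differ from the paper's in a worthwhile way. For the characterization of $[{\mathcal T}]_{\mu,1}$ the paper invokes its Lemma 4.3, whose $p=1$ case is proved by a H\"older/codimension-one argument (and is in fact only sketched there); your annihilator computation $({\mathcal T})^\perp={\mathbb C}\cdot w^{-1}$ via uniqueness of Fourier coefficients is a clean independent derivation, though it leans on the standing hypothesis $\essinf_\lambda w=c>0$ to get $w^{-1}\in L^\infty$, which Lemma 4.3 does not need. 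For the extremal analysis, the paper first deduces $\int|1-\psi|\,d\lambda\le 1$ from $c\ge c\int|1-\psi|\,d\lambda$, concludes $1-\psi\ge 0$, and then handles $\lambda(B)=0$ by an $\esssup_\lambda(1/w)$ argument that forces $w$ to be constant on $\{\psi<1\}$ and derives a contradiction; your decomposition $w=(w-c)+c$ packages both equality conditions ($\psi=1$ a.e.\ off $B$ and $1-\psi\ge 0$ with $\int_B(1-\psi)\,d\lambda=1$) into one display and makes the $\lambda(B)=0$ case immediate ($\psi=1$ a.e.\ contradicts $\int\psi\,d\lambda=0$), which is tidier. One point worth keeping: your parenthetical caveat that ``infinitely many'' requires $\lambda$ restricted to $B$ not to be a single atom (automatic for infinite $\Gamma$) is genuine --- for a finite group $G$ with $B$ a singleton the minimizer is unique --- and the paper's proof silently makes the same assumption, so you have in fact been more careful than the source.
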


To prove Proposition \ref{p4.2} we need a characterization of the set $[{\mathcal T}]_{\mu,1}$ in $L^1(\mu)$. The following description is more general and of interest in its own.

\begin{lemma}   \label{l4.3}
   Let $p \in [1,\infty)$. If $[{\mathcal T}]_{\mu,p} \not= L^p(\mu)$, then all functions of $L^p(\mu)$ are integrable with respect to $\lambda$ and $[{\mathcal T}]_{\mu,p}$ is exactly the subspace of all $f \in L^1(\mu)$ with $\int f \, d\lambda =0$. 
\end{lemma}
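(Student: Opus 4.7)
\medskip
\noindent\textbf{Proof plan.} The strategy is to show that the hypothesis $[{\mathcal T}]_{\mu,p}\not=L^p(\mu)$ forces $w$ to be bounded away from zero in a quantitative sense, so that integration against $\lambda$ is a continuous linear functional on $L^p(\mu)$; the closed hyperplane it defines will turn out to be exactly $[{\mathcal T}]_{\mu,p}$, the reverse inclusion being extracted by Hahn--Banach combined with uniqueness of Fourier coefficients on the compact group $\Gamma$.

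First, I would observe that $[{\mathcal T}]_{\mu,p}\not=L^p(\mu)$ is equivalent to $d_p(\mu)>0$, since Stone--Weierstrass (applied to the character-separated compact space $\Gamma$) together with regularity of $\mu$ makes the span of $\{e_x:x\in G\}$ dense in $L^p(\mu)$, so the obstruction to $[{\mathcal T}]_{\mu,p}=L^p(\mu)$ is exactly $1\notin[{\mathcal T}]_{\mu,p}$. By (\ref{(e4.3)}) and (\ref{(e4.4)}), this means $w^{-q/p}\in L^1(\lambda)$ when $p\in(1,\infty)$ and $c:=\essinf_\lambda w>0$ when $p=1$. In the case $p=1$ one has $\int|f|\,d\lambda\le c^{-1}\int|f|\,d\mu$; in the case $p>1$, H\"older's inequality applied to $|f|=|f|w^{1/p}\cdot w^{-1/p}$ yields
\[
\int|f|\,d\lambda\le\bigl(\|f\|_{L^p(\mu)}\bigr)\Bigl(\int w^{-q/p}\,d\lambda\Bigr)^{1/q}.
\]
In either case $L^p(\mu)\subseteq L^1(\lambda)$ and the functional $L(f):=\int f\,d\lambda$ is bounded on $L^p(\mu)$. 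Since $\int e_x\,d\lambda=0$ for every $x\in S=G\setminus\{0\}$ by orthogonality of characters on the compact group $\Gamma$ (recall $\lambda(\Gamma)=1$), $L$ vanishes on ${\mathcal T}$, hence by continuity on $[{\mathcal T}]_{\mu,p}$; this gives the inclusion $[{\mathcal T}]_{\mu,p}\subseteq\ker L$.

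For the reverse inclusion, assume $f\in\ker L$ but $f\notin[{\mathcal T}]_{\mu,p}$ and invoke Hahn--Banach: there is $g\in L^q(\mu)$ (understood as $L^\infty(\mu)$ when $p=1$) with $\int h\,g\,d\mu=0$ for all $h\in[{\mathcal T}]_{\mu,p}$ and $\int f\,g\,d\mu\ne 0$. Since $\mu$ is finite, $g\in L^1(\mu)$, so $gw\in L^1(\lambda)$ and its Fourier coefficients are well defined. For every $y\in G\setminus\{0\}$, $e_{-y}\in{\mathcal T}$, so
\[
\widehat{gw}(y)=\int gw\,\overline{e_y}\,d\lambda=\int g\,e_{-y}\,d\mu=0.
\]
Uniqueness of Fourier coefficients on the compact abelian group $\Gamma$ forces $gw=c_0$ $\lambda$-a.e.\ for a constant $c_0$. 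But then
\[
\int fg\,d\mu=\int f\,(gw)\,d\lambda=c_0\,L(f)=0,
\]
contradicting the choice of $g$. Thus $\ker L\subseteq[{\mathcal T}]_{\mu,p}$, completing the proof.

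The only delicate point is the Fourier-uniqueness step, which requires $gw\in L^1(\lambda)$ so that the coefficients make sense; this is why I need $g\in L^1(\mu)$, which is immediate from finiteness of $\mu$ and the fact that $g$ lies in $L^q(\mu)$ (or $L^\infty(\mu)$). The rest is bookkeeping: the continuity of $L$ in Step 1 serves both to pass from ${\mathcal T}$ to $[{\mathcal T}]_{\mu,p}$ in the easy inclusion and to make sense of $L(f)$ in the concluding line of the Hahn--Banach argument.
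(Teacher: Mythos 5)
Your proof is correct. The first half (integrability of $L^p(\mu)$ with respect to $\lambda$ via $w^{-q/p}\in L^1(\lambda)$, resp.\ $\essinf_\lambda w>0$, and the resulting continuity of $L(f)=\int f\,d\lambda$, which kills ${\mathcal T}$ and hence its closure) is essentially the paper's argument. For the reverse inclusion $\ker L\subseteq[{\mathcal T}]_{\mu,p}$ you take a genuinely different route: the paper simply notes that $[{\mathcal T}]_{\mu,p}$ has codimension one in $L^p(\mu)$ (because $[{\mathcal T}(G)]_{\mu,p}=L^p(\mu)$ and ${\mathcal T}(G)={\mathcal T}(G\setminus\{0\})+\mathbb{C}$), so the proper closed subspace $\ker L$ containing it must coincide with it; you instead dualize, producing by Hahn--Banach a $g\in L^q(\mu)$ annihilating $[{\mathcal T}]_{\mu,p}$, showing all nonzero Fourier coefficients of $gw\in L^1(\lambda)$ vanish, and concluding $gw$ is constant by injectivity of the Fourier transform on the compact group $\Gamma$. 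Both arguments are sound and rest on the same structural fact; the paper's codimension count is shorter (though it quietly uses that a closed subspace plus a finite-dimensional one is closed), while your version buys an explicit description of the annihilator of $[{\mathcal T}]_{\mu,p}$ as the span of $1/w$, which is exactly the content of Weron's formula (\ref{(e4.2)}) for the metric projection. One small point worth making explicit in your write-up: the hypothesis forces $w>0$ $\lambda$-a.e., so $g$ is defined $\lambda$-a.e.\ and the identity $\int fg\,d\mu=\int f(gw)\,d\lambda$ is legitimate.
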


\begin{proof}
Let $p\in (1,\infty)$. If $w^{-\frac{q}{p}} \in L^1(\mu)$, then for $f \in L^p(\mu)$ H\"olders inequality yields
\begin{equation}  \label{(e4.6)}
   \int |f| \, d\lambda = \int |f|  w^\frac{1}{p} w^{-\frac{1}{p}} \, d\lambda \leq \left( \int |f|^p w \, d\lambda \right)^\frac{1}{p} \left( \int w^{-\frac{q}{p}} \, d\lambda \right)^\frac{1}{q} < \infty \, .
\end{equation}
Moreover, (\ref{(e4.6)}) reveals that $\int f \, d\lambda =0$ for all $f \in [{\mathcal T}]_{\mu,p}$. Conversely, if $f \in L^p(\mu)$ and $\int f \, d\lambda =0$, then  $f \in [{\mathcal T}]_{\mu,p}$ since  $[{\mathcal T}]_{\mu,p}$ has codimension $1$ and the non-zero constants do not belong to  $[{\mathcal T}]_{\mu,p}$. If $p=1$ the proof is similar and, thus, is omitted.
\end{proof}

{\it Proof of Proposition \ref{p4.2}.} Let $c:= \essinf_\lambda w >0$. By Lemma \ref{l4.3}, a function $\psi \in L^1(\mu)$ is an element of  $[{\mathcal T}]_{\mu,1}$ and satisfies (\ref{(e4.5)}) if and only if 
\begin{equation}   \label{(e4.7)}
    \int \psi \, d\lambda = 0
\end{equation}
and
\begin{equation}    \label{(e4.8)}
   \int |1-\psi| w \, d\lambda =c \, .
\end{equation}
From (\ref{(e4.7)}) one concludes
\begin{equation}     \label{(e4.9)}
   \int (1-\psi) \, d\lambda = 1
\end{equation}
and (\ref{(e4.8)}) implies that $c=\int |1-\psi| w \, d\lambda \geq c \cdot \int |1-\psi| \, d\lambda$, hence $\int |1-\psi| \, d\lambda \leq 1$. Taking into account (\ref{(e4.9)}), we obtain $1-\psi \geq 0$ and (\ref{(e4.8)}) transforms to
\begin{equation}   \label{(e4.10)}
    \int (1-\psi)w \, d\lambda = c \, .
\end{equation}
 If $\lambda(B) >0$ one easily concludes that any function $\psi \in L^1(\mu)$ with $\psi=1$ on $\Gamma \setminus B$,  $1-\psi \geq 0$, and $\int_B \psi \, d\lambda = - \lambda(\Gamma \setminus B)$ satisfies (\ref{(e4.7)}) and (\ref{(e4.8)}). Let $\lambda(B)=0$. If a function $\psi \in L^1(\mu)$ would satisfy (\ref{(e4.9)}), (\ref{(e4.10)}) and the inequality $1-\psi \geq 0$, it would follow $1= \int (1-\psi) \, d\lambda \leq \esssup_\lambda \frac{1}{w} \cdot \int (1-\psi)w \, d\lambda = \frac{1}{c} \cdot c = 1$. So $w$ would be a constant, say $C$, on the set $A:=\{ \gamma \in \Gamma : \psi(\gamma) < 1 \}$. Since $\lambda(B) =0$ we have $C >c$ and $\lambda(A) >0$. Consequently, $c=\int_A (1-\psi)w \, d\lambda = C\cdot \int (1-\psi) \, d\lambda  = C$, a contradiction.
$\Box$ \linebreak[4]

Recall, that a subspace $L$ of $L^p(\mu)$ is a Cheby{\v{s}}ev subspace if for arbitrary $f \in L^p(\mu)$ the metric projection onto $L$ exists. Cheby{\v{s}}ev subspaces of $L^1(\mu)$ with finite codimension were described by Garkavi \cite[Thm.~3]{[8]}. From our results we can derive the following corollary: 

\begin{corollary}   \label{c4.4}
   Let $p \in (0,\infty)$ and $\mu \in \mathcal M$. If $p \not= 1$, the space $[{\mathcal T}(G \setminus \{0 \})]_{\mu,p}$ is a Cheby{\v{s}}ev subspace of $L^p(\mu)$. The space $[{\mathcal T}(G \setminus \{0 \})]_{\mu,1}$ is a Cheby{\v{s}}ev subspace of $L^1(\mu)$ if and only if $\essinf_\lambda w =0$, where $w$ denotes the Radon-Nikodym derivative of the absolutely continuous part of $\mu$.
\end{corollary}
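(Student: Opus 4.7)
The plan is to split on $p$ into the three ranges $p\in(1,\infty)$, $p\in(0,1)$, and $p=1$, exploiting the standing reductions of this section that $G$ is discrete and $d\mu=w\,d\lambda$. For $p\in(1,\infty)$ my plan is simply to invoke Clarkson's theorem: $L^p(\mu)$ is uniformly convex, hence reflexive and strictly convex; existence of a metric projection onto any closed convex subset then follows from reflexivity, and uniqueness from strict convexity. Applied to the closed linear subspace $[{\mathcal T}]_{\mu,p}$ this yields the claim with no further effort.

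For $p\in(0,1)$ I will use the remark preceding~(4.1) that $d_p(\mu;G\setminus\{0\})=0$ for every $\mu\in\mathcal M$. This places the constant $1$ inside the closed set $[{\mathcal T}]_{\mu,p}$; together with the characters $e_x$ for $x\neq 0$ already belonging to $\mathcal T$, the full polynomial algebra $\mathcal T(G)$ lies inside $[{\mathcal T}]_{\mu,p}$. Regularity of $\mu$ on the compact space $\Gamma$ makes $C(\Gamma)$ dense in $L^p(\mu)$, and Stone--Weierstra{\ss} supplies uniform density of $\mathcal T(G)$ in $C(\Gamma)$; consequently $[{\mathcal T}]_{\mu,p}=L^p(\mu)$, which is trivially a Chebyshev subspace of itself.

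For $p=1$ Proposition~\ref{p4.2} and Lemma~\ref{l4.3} carry the weight. If $\essinf_\lambda w=0$, then $d_1(\mu)=0$ by~(4.4) and the preceding paragraph's argument again gives $[{\mathcal T}]_{\mu,1}=L^1(\mu)$, trivially Chebyshev. If $\essinf_\lambda w>0$, then $1\notin[{\mathcal T}]_{\mu,1}$, and Lemma~\ref{l4.3} identifies $[{\mathcal T}]_{\mu,1}$ as the kernel of the linear functional $f\mapsto c_f:=\int f\,d\lambda$ on $L^1(\mu)$, producing the direct-sum decomposition $L^1(\mu)=[{\mathcal T}]_{\mu,1}\oplus\mathbb C\cdot 1$. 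For any $f\in L^1(\mu)$, subtracting the element $f-c_f\in[{\mathcal T}]_{\mu,1}$ and, when $c_f\neq 0$, rescaling by $c_f^{-1}$, reduces the existence/uniqueness of a best approximation of $f$ onto $[{\mathcal T}]_{\mu,1}$ to the same question for the constant $1$. Proposition~\ref{p4.2} then shows that $1$ admits infinitely many best approximations if $\lambda(B)>0$ and none at all if $\lambda(B)=0$, so $[{\mathcal T}]_{\mu,1}$ fails to be Chebyshev. The only step needing a moment of care is this translation-and-rescaling reduction: I must check that both operations preserve existence and uniqueness of metric projections onto the subspace, which is routine once Lemma~\ref{l4.3} has supplied the codimension-one structure.
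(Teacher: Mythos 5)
Your proposal is correct and follows essentially the route the paper intends for this corollary: uniform convexity of $L^p(\mu)$ for $p\in(1,\infty)$, the identity $[{\mathcal T}]_{\mu,p}=L^p(\mu)$ forced by $d_p(\mu)=0$ for $p\in(0,1)$ (and for $p=1$ when $\essinf_\lambda w=0$), and Proposition~\ref{p4.2} together with Lemma~\ref{l4.3} for the remaining case, with the singular part handled implicitly via the reduction (\ref{(e4.1)}) exactly as in the text. The only cosmetic remark is that your translation-and-rescaling reduction for $p=1$ is superfluous: to refute the Cheby{\v{s}}ev property it suffices to exhibit the single element $1$ whose best approximation is non-unique or non-existent, which is precisely what Proposition~\ref{p4.2} provides.
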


In the remaining part of this section some of the general results  of Section 3 are specified to the case $S=G \setminus \{ 0 \}$. Let us assume that $\mu_k\in \mathcal M$ is absolutely continuous and set $w_k:=\frac{d\mu_k}{d\lambda}$, $ k \in {\mathbb N}_0$.

\begin{theorem}    \label{t4.5}
   Let $p \in (1,\infty)$. The relations (R2) and (R3) are equivalent for $S=G \setminus \{ 0 \}$.
\end{theorem}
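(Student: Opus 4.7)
My plan is to reduce the forward implication to Theorem \ref{t3.9} by verifying its hypothesis in the present setting, namely that $\phi_{n,p}(S)$ lies in $[\mathcal{T}(S)]_{\mu_0,p}$ eventually. The reverse implication (R3) $\Rightarrow$ (R2) is free: as observed right after the statement of (R3) in Section~3, (R3) yields (R2) with no hypothesis on the sequence.

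Before invoking Theorem \ref{t3.9} I would dispose of two degenerate cases. If $G$ is not discrete, Lemma \ref{l4.1} forces $[\mathcal{T}]_{\mu,p} = L^p(\mu)$ for every $\mu \in \mathcal{M}$, so $\phi_{k,p} = 1$ for every $k$ and both (R2) and (R3) collapse to the tautology $0 = 0$. If $G$ is discrete I may replace each $\mu_k$ by its absolutely continuous part and write $d\mu_k = w_k\,d\lambda$, using (\ref{(e4.1)}). If in addition $d_p(\mu_0) = 0$, then $\phi_{0,p} = 1$ $\mu_0$-almost everywhere, so (R2) and (R3) are literally the same statement. Thus I may assume $G$ discrete, each $\mu_k$ absolutely continuous, and $d_p(\mu_0) > 0$.

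Under (R2), for every $n$ from some index $n_0$ onward one has $0 < d_p(\mu_n,\mu_0;S) < \infty$. Finiteness gives $1 - \phi_{n,p} \in L^p(\mu_0)$ and hence $\phi_{n,p} \in L^p(\mu_0)$; strict positivity forces $\phi_{n,p} \neq 1$ in $L^p(\mu_0)$, which rules out the branch of Weron's description corresponding to $w_n^{-q/p} \notin L^1(\lambda)$. So for $n \geq n_0$ the explicit formula (\ref{(e4.2)}) applies:
\begin{equation*}
    \phi_{n,p} = 1 - \Bigl(\int w_n^{-q/p}\,d\lambda\Bigr)^{-1} w_n^{-q/p},
\end{equation*}
and integrating against the normalized Haar measure on the compact group $\Gamma$ yields $\int \phi_{n,p}\,d\lambda = 1 - 1 = 0$. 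Since $d_p(\mu_0) > 0$ forces $[\mathcal{T}]_{\mu_0,p} \neq L^p(\mu_0)$, Lemma \ref{l4.3} identifies $\phi_{n,p}$ as an element of $[\mathcal{T}]_{\mu_0,p}$ for every $n \geq n_0$. Applying Theorem \ref{t3.9} to the shifted sequence $(\mu_{n_0}, \mu_{n_0+1}, \ldots)$ then delivers (R3) for the tail, which suffices since (R3) is an asymptotic condition.

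The one delicate point I anticipate is securing $\phi_{n,p} \in L^p(\mu_0)$, which is not immediate because $\phi_{n,p}$ was originally only defined as a $\mu_n$-equivalence class; here the assumption $\mu_0 \ll \mu_n$ combined with the finiteness of $d_p(\mu_n,\mu_0;S)$ delivered by (R2) does exactly this job. Once that is in place, Weron's formula makes the $\lambda$-mean-zero condition of Lemma \ref{l4.3} automatic, and the equivalence is driven by Theorem \ref{t3.9}.
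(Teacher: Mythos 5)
Your proof is correct and follows essentially the same route as the paper: reduce the forward implication to Theorem \ref{t3.9} by showing that, in the non-degenerate case $d_p(\mu_0)>0$, (R2) forces $\phi_{n,p}\in L^p(\mu_0)$ with $\int\phi_{n,p}\,d\lambda=0$ for large $n$, whence $\phi_{n,p}\in[\mathcal{T}]_{\mu_0,p}$ by Lemma \ref{l4.3}. The only cosmetic difference is that you extract the mean-zero property from Weron's explicit formula (\ref{(e4.2)}) while the paper gets it by applying Lemma \ref{l4.3} to $\mu_n$ itself; both rest on the same computation.
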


\begin{proof} According to Theorem \ref{t3.9} we have only to prove that in case $d_p(\mu_0) \not= 0$ and $\lim d_p(\mu_n,\mu_0)=d_p(\mu_0)$, $\phi_{n,p}\in [{\mathcal T}]_{\mu_0,p}$ for all $n$ large enough. Since the just mentioned conditions imply that $[{\mathcal T}]_{\mu_n,p} \not=L^p(\mu_n)$ and $\phi_{n,p} \in L^p(\mu_0)$ for all $n$ large enough, we get $\int \phi_{n,p}\, d\lambda = 0$, hence $\phi_{n,p}\in [{\mathcal T}]_{\mu_0,p}$ by Lemma \ref{l4.3}. 
\end{proof}

The next theorem shows that for $S=G \setminus \{ 0 \}$ weak*-convergence can be replaced by convergence in measure in Theorem \ref{t3.1}.

\begin{theorem}  \label{t4.6}
   If $\rm m$-$\lim \mu_n = \mu_0$, then inequality (\ref{(e3.3)}) is satisfied for $S=G \setminus \{ 0 \}$ and all $p \in [1,\infty)$.
\end{theorem}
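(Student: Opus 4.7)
The strategy is to exploit the explicit formulas for $d_p(\mu)$ already recorded in this section, together with the basic fact that a sequence converging in measure admits a subsequence converging pointwise almost everywhere. By the reductions established above, I may assume $\mu_k = w_k\,d\lambda$ with $w_k\in L^1(\lambda)$. Choosing $\nu:=\lambda$ in Definition \ref{d2.5}, the hypothesis $\mathrm{m}$-$\lim\mu_n=\mu_0$ reads $\lambda$-$\lim w_n=w_0$.

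First I would handle $p=1$ using the identity $d_1(\mu)=\essinf_\lambda w$. Suppose, toward a contradiction, that $\limsup d_1(\mu_n)>d_1(\mu_0)=:c_0$. Then along some subsequence one has $\essinf_\lambda w_{n'}\geq c_0+\varepsilon$ for a fixed $\varepsilon>0$, i.e.\ $w_{n'}\geq c_0+\varepsilon$ $\lambda$-a.e. Extract a further subsequence $w_{n''}$ converging to $w_0$ $\lambda$-a.e.; the same bound is inherited by the limit, contradicting the definition of $c_0$.

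For $p\in(1,\infty)$, set $q:=p/(p-1)$ and define $g_k:=w_k^{-q/p}$ with the conventions $0^{-q/p}:=+\infty$ and $\infty^{-q/p}:=0$. I claim
\begin{equation*}
    \liminf_{n\to\infty}\int g_n\,d\lambda \geq \int g_0\,d\lambda.
\end{equation*}
Given any subsequence of $(w_n)$, pass to a further subsequence converging to $w_0$ $\lambda$-a.e.; by continuity of $x\mapsto x^{-q/p}$ on $[0,\infty]$, the integrands converge a.e., and Fatou's lemma yields the inequality along that subsequence. Since every subsequence admits such a sub-subsequence, the inequality holds for the full sequence. If $\int g_0\,d\lambda<\infty$, the formula \eqref{(e4.3)} gives $d_p(\mu_0)=(\int g_0\,d\lambda)^{-p/q}$ and, by the same formula (taking $d_p(\mu_n):=0$ whenever $g_n\notin L^1(\lambda)$), the desired conclusion $\limsup d_p(\mu_n)\leq d_p(\mu_0)$ follows by taking reciprocals and raising to the power $p/q$. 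If $\int g_0\,d\lambda=\infty$, then $d_p(\mu_0)=0$ and the $\liminf$ inequality forces $\int g_n\,d\lambda\to\infty$, hence $d_p(\mu_n)\to 0$.

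The bookkeeping that requires the most care is the treatment of the set $\{w_0=0\}$ together with the possible failure of $w_n$ to be bounded below: this is precisely the place where convergence in measure (rather than a.e.\ convergence) matters, but the standard \emph{every subsequence has a further a.e.-convergent subsequence} device combined with Fatou handles both the finite and infinite branches of \eqref{(e4.3)} uniformly.
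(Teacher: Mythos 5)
Your proof is correct and follows essentially the same route as the paper: for $p\in(1,\infty)$ the paper likewise deduces the inequality from formula \eqref{(e4.3)} and Fatou's lemma (your subsequence/a.e.-convergence device is exactly the standard way to make that rigorous under convergence in measure), and for $p=1$ it argues via \eqref{(e4.4)} by contradiction. The only cosmetic difference is that for $p=1$ the paper derives the contradiction directly from the definition of convergence in measure on a set where $w_0$ is near its essential infimum, whereas you pass to an a.e.-convergent subsequence; the two arguments are interchangeable.
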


\begin{proof}
If $p \in (1,\infty)$, the assertion is an easy consequence of (\ref{(e4.3)}) and Fatou's lemma. Let $p=1$. Assume that there exists a subsequence $(w_{n'})$ such that $\lim_{n' \to \infty} \essinf_\lambda w_{n'} =:a > d_1(\mu_0)$. Then $w_{n'} > d_1(\mu_0) + \frac{3}{4} (a-d_1(\mu_0))$ $\lambda$-a.e. for all sufficiently large $n'$. On the other hand, there exists a set $B \in \mathcal B$ with $\lambda(B) > 0$ and $w_0 < d_1(\mu_0)+ \frac{1}{4} (a-d_1(\mu_0))$ on $B$, what implies that for all sufficiently large $n'$ we have $|w_0-w_{n'}| > \frac{1}{2} (a-d_1(\mu_0))$ $\lambda$-a.e. on $B$. This is a contradiction to m-$\lim \mu_n = \mu_0$.
\end{proof}

\begin{corollary}  \label{c4.7}
   If $\rm m$-$\lim \mu_n = \mu_0$ and $d_p(\mu_0)=0$, then (R1) is satisfied for $S=G \setminus \{ 0 \}$ and all $p \in [1,\infty)$.
\end{corollary}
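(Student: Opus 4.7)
The plan is straightforward: I would deduce Corollary \ref{c4.7} directly from Theorem \ref{t4.6}. Since $d_p(\mu_0; S) = 0$ by hypothesis and $\rm m$-$\lim \mu_n = \mu_0$, Theorem \ref{t4.6} gives
\[
   \limsup d_p(\mu_n; S) \leq d_p(\mu_0; S) = 0 .
\]
On the other hand, from the very definition (\ref{(e3.1)}) of $d_p(\mu; S)$ as an infimum of nonnegative quantities we have $d_p(\mu_n; S) \geq 0$ for every $n$, so $\liminf d_p(\mu_n; S) \geq 0$. Combining the two bounds yields $\lim d_p(\mu_n; S) = 0 = d_p(\mu_0; S)$, which is exactly (R1).

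There is essentially no obstacle to overcome in the corollary itself, because all of the real work is packaged in Theorem \ref{t4.6}, whose proof already invoked the explicit formulas (\ref{(e4.3)}) and (\ref{(e4.4)}) together with Fatou's lemma (for $p \in (1,\infty)$) and a direct pointwise argument on the essential infimum (for $p = 1$). The only points worth double-checking while writing up are that the hypothesis $p \in [1,\infty)$ matches the range for which Theorem \ref{t4.6} is stated, and that the one-sided inequality of Theorem \ref{t4.6} indeed becomes two-sided precisely because the limiting value is zero; in general, as Example \ref{e3.2}(a) shows, one can have $\liminf d_p(\mu_n;S) < d_p(\mu_0;S)$, so the reduction to an honest limit is a special feature of the case $d_p(\mu_0;S) = 0$.
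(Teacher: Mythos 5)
Your proposal is correct and is exactly the argument the paper intends: Corollary \ref{c4.7} is stated without proof precisely because it follows immediately from Theorem \ref{t4.6} together with the nonnegativity of $d_p(\mu_n;S)$, which is the two-line deduction you give. Nothing further is needed.
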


\begin{corollary}   \label{c4.8}
   Let $p \in (1, \infty)$. If $\rm m$-$\lim \mu_n =\mu_0$, then for $G=S \setminus \{ 0 \}$ the relation (R3) yields (R1).
\end{corollary}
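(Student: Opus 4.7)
The plan is to apply Lemma 3.11, which delivers (R1) as soon as one verifies its three hypotheses: (i) $\rm m$-$\lim \mu_n = \mu_0$, (ii) $\limsup d_p(\mu_n; S) \leq d_p(\mu_0; S)$, and (iii) $\mu_0$-$\lim |1-\phi_{n,p}(S)| = |1-\phi_{0,p}(S)|$. Hypothesis (i) is the very assumption of the corollary. Hypothesis (ii) is exactly the conclusion of Theorem 4.6, which was proved for the set $S = G \setminus \{0\}$ and all $p \in [1,\infty)$ under convergence in measure, so it applies directly here. Note also that for $p \in (1,\infty)$ the metric projections $\phi_{k,p}(S)$ all exist (as recalled at the beginning of Section 3), and the very formulation of (R3) presupposes $\mu_0 \ll \mu_n$, so the ambient absolute continuity assumption of Lemma 3.11 is in force.

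It remains to deduce (iii) from (R3). Since (R3) asserts that $\phi_{n,p} \to \phi_{0,p}$ in the $L^p(\mu_0)$ norm, this convergence implies convergence in $\mu_0$-measure by the standard Chebyshev-type estimate. The reverse triangle inequality
\begin{equation*}
\bigl| |1-\phi_{n,p}| - |1-\phi_{0,p}| \bigr| \leq |\phi_{n,p} - \phi_{0,p}|
\end{equation*}
then transfers this convergence in measure from the functions themselves to their moduli $|1-\phi_{n,p}|$, which is exactly condition (iii). Invoking Lemma 3.11 completes the proof.

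The argument is really just an assembly of Theorem 4.6 (for condition (ii)) with Lemma 3.11, so there is no substantive obstacle; the only point worth flagging is that the step (R3) $\Rightarrow$ (iii) uses nothing more than the elementary fact that $L^p$-convergence implies convergence in measure, together with the reverse triangle inequality, neither of which requires any special features of the interpolation setting $S = G \setminus \{0\}$.
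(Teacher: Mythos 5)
Your proof is correct and follows exactly the paper's route: the paper's entire proof reads ``Apply Lemma \ref{l3.11} and Theorem \ref{t4.6}'', and you have simply filled in the routine verifications (that (R3) gives hypothesis (iii) of Lemma \ref{l3.11} via convergence in measure and the reverse triangle inequality, and that the ambient existence and absolute-continuity assumptions are in force).
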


\begin{proof}
Apply Lemma \ref{l3.11} and Theorem \ref{t4.6}.
\end{proof}

Let $\mu, \nu \in \mathcal M$ be absolutely continuous and $\mu << \nu$. Let $w:=\frac{d\mu}{d\lambda}$, $v:= \frac{d\nu}{d\lambda}$. If $v^{-\frac{q}{p}} \not\in L^1(\lambda)$ then $d_p(\nu,\mu)=0$. If $v^{-\frac{q}{p}} \in L^1(\lambda)$ then (\ref{(e4.2)}) yields 
\begin{equation}   \label{(e4.11)}
   d_p(\nu,\mu) =\left( \int v^{-\frac{q}{p}} \, d\lambda \right)^{-p} \cdot \int v^{-q}w \, d\lambda = (d_p(\nu))^q \cdot  \int v^{-q}w \, d\lambda \,\,\,\, {\rm for}\,\,\, p \in (1,\infty) \, .
\end{equation}

\begin{theorem}   \label{t4.9}
   If $p\in (1,\infty)$ then $d_p(\nu,\mu)=0$ or it satisfies the inequality 
\begin{equation}   \label{(e4.12)}
   d_p(\mu) \leq d_p(\nu , \mu) \leq \infty \, .
\end{equation}
If $d_p(\mu) >0$, the left inequality of (\ref{(e4.12)}) becomes an equality if and only if $\nu = a\mu$ for some number $a \in (0,\infty)$.
\end{theorem}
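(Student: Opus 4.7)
The plan is to use the explicit formula (4.11) together with the characterisation of $[\mathcal{T}]_{\mu,p}$ provided by Lemma \ref{l4.3}. Set $w:=\frac{d\mu}{d\lambda}$ and $v:=\frac{d\nu}{d\lambda}$. If $v^{-q/p}\notin L^1(\lambda)$, the convention recorded just above (4.11) gives $d_p(\nu,\mu)=0$, putting us in the first alternative; so I may henceforth assume $v^{-q/p}\in L^1(\lambda)$, in which case $\phi_{\nu,p}=1-(\int v^{-q/p}\,d\lambda)^{-1}v^{-q/p}$ by the analogue of (4.2) for $\nu$.

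To prove (\ref{(e4.12)}), the case $d_p(\nu,\mu)=\infty$ is trivial, so assume $d_p(\nu,\mu)<\infty$; this means $\phi_{\nu,p}\in L^p(\mu)$. If $[\mathcal{T}]_{\mu,p}=L^p(\mu)$ then $d_p(\mu)=0$ and (\ref{(e4.12)}) is obvious. Otherwise Lemma \ref{l4.3}, applied to $\nu$ (valid because $[\mathcal{T}]_{\nu,p}\neq L^p(\nu)$ when $v^{-q/p}\in L^1(\lambda)$), yields $\int\phi_{\nu,p}\,d\lambda=0$, while the same lemma applied to $\mu$ identifies $[\mathcal{T}]_{\mu,p}$ as the codimension-one subspace of $f\in L^p(\mu)$ with $\int f\,d\lambda=0$. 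Combining these gives $\phi_{\nu,p}\in[\mathcal{T}]_{\mu,p}$, hence
$$d_p(\mu)\le\int|1-\phi_{\nu,p}|^p\,d\mu=d_p(\nu,\mu).$$

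For the equality statement, suppose $d_p(\mu)>0$ and $d_p(\mu)=d_p(\nu,\mu)$. Then $\phi_{\nu,p}\in[\mathcal{T}]_{\mu,p}$ realises the infimum defining $d_p(\mu)$, and by uniqueness of the metric projection in $L^p(\mu)$ for $p\in(1,\infty)$ we get $\phi_{\nu,p}=\phi_{\mu,p}$ $\mu$-a.e. Comparing the two explicit formulas of (4.2)-type forces $v^{-q/p}$ and $w^{-q/p}$ to be proportional $\mu$-a.e., so $v=\alpha w$ $\mu$-a.e. for some $\alpha>0$. Now $d_p(\mu)>0$ means $w^{-q/p}\in L^1(\lambda)$, which forces $w>0$ $\lambda$-a.e.; combined with $\mu\ll\nu$ this also gives $v>0$ $\lambda$-a.e., so the proportionality actually holds $\lambda$-a.e. and yields $\nu=a\mu$ with $a=1/\alpha$. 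The converse is the direct computation that $v=aw$ makes $\phi_{\nu,p}$ and $\phi_{\mu,p}$ coincide, and thus $d_p(\nu,\mu)=d_p(\mu)$.

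The step I expect to be the main obstacle is transferring $\phi_{\nu,p}$, which naturally lives in $[\mathcal{T}]_{\nu,p}\subset L^p(\nu)$, into $[\mathcal{T}]_{\mu,p}\subset L^p(\mu)$: $\phi_{\nu,p}$ need not belong to $L^p(\mu)$ at all (cf.~Example \ref{e3.6}(b)), and even when it does there is no a priori inclusion $[\mathcal{T}]_{\nu,p}\subseteq[\mathcal{T}]_{\mu,p}$. Reducing membership in $[\mathcal{T}]_{\mu,p}$ to the single scalar condition $\int f\,d\lambda=0$ via Lemma \ref{l4.3} is what makes this transfer possible.
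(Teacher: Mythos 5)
Your proof is correct. For the inequality $d_p(\mu)\le d_p(\nu,\mu)$ you follow exactly the route the paper takes: Lemma \ref{l4.3} applied to $\nu$ gives $\int\phi_{\nu,p}\,d\lambda=0$, Lemma \ref{l4.3} applied to $\mu$ identifies $[\mathcal T]_{\mu,p}$ by that same scalar condition, hence $\phi_{\nu,p}\in[\mathcal T]_{\mu,p}$ and the inequality is immediate from the definition of $d_p(\mu)$; your closing remark about why this transfer is the crux is precisely the point of the paper's ``different proof applying Lemma \ref{l4.3}.'' Where you genuinely diverge is the equality characterization. The paper does not stay inside this framework: it reverts to the computation in the remarks preceding the theorem, multiplying out (\ref{(e4.3)}) and (\ref{(e4.11)}) to recast the left inequality of (\ref{(e4.12)}) as an instance of H\"older's inequality for $\int(v^{-q/p}w^{1/p})w^{-1/p}\,d\lambda$, and then reads off the equality case of H\"older, namely $(v^{-q/p}w^{1/p})^p=a^{-q}w^{-q/p}$, i.e.\ $v=aw$. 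You instead invoke uniqueness of the metric projection in $L^p(\mu)$ for $p\in(1,\infty)$ to conclude $\phi_{\nu,p}=\phi_{\mu,p}$ $\mu$-a.e., and then compare the two formulas of type (\ref{(e4.2)}); the observation that $w^{-q/p}\in L^1(\lambda)$ forces $w>0$ $\lambda$-a.e., so that $\mu$-a.e.\ proportionality upgrades to $\lambda$-a.e., is the right way to close the gap. Your version keeps the whole theorem within the Lemma \ref{l4.3} picture and needs no integral identity beyond (\ref{(e4.2)}), at the cost of invoking strict convexity of $L^p(\mu)$; the paper's H\"older route is more computational but self-contained and, as the authors note, also yields the inequality itself independently. (Only a cosmetic slip: from $v=\alpha w$ one gets $\nu=\alpha\mu$, so $a=\alpha$ rather than $1/\alpha$; this does not affect the statement.)
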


Note, that the cases $d_p(\mu) >0$ and $d_p(\nu,\mu)=0$ as well as $d_p(\nu,\mu)=\infty$ can occur, cf.~Examples \ref{e3.2} and \ref{e3.6}. If both $d_p(\mu)$ and $d_p(\nu,\mu)$ are positive real numbers, the left inequality of (\ref{(e4.12)}), which was obtained for $p=2$ by Taniguchi \cite[p.~57]{[30]} in a slightly more general form, is a simple consequence of H\"older's inequality. To see this, multiply the left inequality of (\ref{(e4.12)}) by $(\int w^{-\frac{q}{p}} \, d\lambda)^\frac{p}{q} (\int v^{-\frac{q}{p}}w \, d\lambda)^p$ and take into account (\ref{(e4.3)}) and (\ref{(e4.11)}). We get the inequality $(\int v^{-\frac{q}{p}} \, d\lambda)^p \leq \int v^{-q} w \, d\lambda \cdot (\int w^{-\frac{q}{p}} \, d\lambda)^\frac{p}{q}$, which can be proved applying H\"older's inequality to the integral $(\int (v^{-\frac{q}{p}} w^\frac{1}{p}) w^{-\frac{1}{p}} \, d\lambda)^p$. Here we give a different proof of Theorem \ref{t4.9} applying Lemma \ref{l4.3}. 

{\it Proof of Theorem \ref{t4.9}.} If $d_p(\mu)=0$ or $d_p(\nu,\mu)=\infty$ there is nothing to prove. Let both $d_p(\mu)$ and $d_p(\nu,\mu)$ be positive numbers. Since Lemma \ref{l4.3} implies that $\phi_{\nu,p} \in [{\mathcal T}]_{\mu,p}$ we get $d_p(\mu) = \int |1-\phi_{\mu,p}|^p \, d\mu \leq \int |1-\phi_{\nu,p}|^p \, d\mu = d_p(\nu,\mu)$ by the definition of the metric projection. Taking into account the remarks after Theorem \ref{t4.9}, we can conclude that the left inequality of (\ref{(e4.12)}) becomes an equality if and only if H\"older's inequality $\int v^{-\frac{q}{p}} w^\frac{1}{p} w^{-\frac{1}{p}} \, d\lambda \leq (\int v^{-q} w \, d\lambda)^\frac{1}{p} (\int w^{-\frac{q}{p}}\, d\lambda)^\frac{1}{q}$ becomes an equality, i.e. if and only if $(v^{-\frac{q}{p}} w^\frac{1}{p})^p = a^{-q} w^{-\frac{q}{p}}$ or, equivalently, $v=aw$ for some $a \in (0,\infty)$.
$\Box$ \linebreak[4]

The assertion of Theorem \ref{t4.9} should be compared with Example \ref{e3.6}, which exhibits that for any $c \in [0,\infty)$ and any $a \in \{0 \} \cup [c,\infty)$ there exist $\mu_0 \in \mathcal M$ and a sequence $(\mu_n)$ of measures of $\mathcal M$ such that $\lim \| \mu_0-\mu_n\|=0$, $d_2(\mu_0)=c$ and $\lim d_2(\mu_n,\mu_0)=a$. 

\begin{corollary}  \label{c4.10}
   Let $p \in (1,\infty)$. If $\rm m$-$\lim \mu_n = \mu_0$ and $\mu_n \geq c\mu_0$, $n \in \mathbb N$, for some positive number $c$, then (R1) is satisfied for $S=G \setminus \{ 0 \}$. If, additionally, $\mu_0 << \mu_n$, then (R3) is satisfied as well.
\end{corollary}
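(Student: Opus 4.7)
The approach is to combine Weron's explicit formulas (4.2)--(4.3) for the metric projection with a dominated convergence argument. Writing $\mu_k = w_k \, d\lambda + \mu_{k,\sigma}$ for the Lebesgue decomposition with respect to $\lambda$, formula (4.2) together with the structure $[\mathcal{T}]_{\mu_k,p} = [\mathcal{T}]_{w_k\,d\lambda,p} \oplus L^p(\mu_{k,\sigma})$ underlying (4.1) lets me take $\phi_{k,p}$ to equal $1 - (\int w_k^{-q/p}\,d\lambda)^{-1} w_k^{-q/p}$ on $\{w_k>0\}$ (when $w_k^{-q/p}\in L^1(\lambda)$) and to equal $1$ $\mu_{k,\sigma}$-a.e. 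Applying Lemma 2.6 with a dominating measure $\nu\in\mathcal{M}$ chosen so that $\lambda \ll \nu$, the hypothesis $\rm m$-$\lim\mu_n=\mu_0$ descends to $\lambda$-$\lim w_n=w_0$, while the assumption $\mu_n\geq c\mu_0$ gives $w_n\geq cw_0$ $\lambda$-a.e.

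For (R1) I split on whether $d_p(\mu_0)$ vanishes. If $d_p(\mu_0)=0$, Corollary 4.7 is directly applicable. If $d_p(\mu_0)>0$, then $w_0^{-q/p}\in L^1(\lambda)$ and the uniform bound $w_n^{-q/p}\leq c^{-q/p}w_0^{-q/p}$ is in force. Continuity of $x\mapsto x^{-q/p}$ on $(0,\infty)$ and convergence in measure $w_n\to w_0$ (with $w_0>0$ $\lambda$-a.e.) pass to $\lambda$-$\lim w_n^{-q/p}=w_0^{-q/p}$. A subsequence-of-subsequences application of Lebesgue's dominated convergence theorem yields $\int w_n^{-q/p}\,d\lambda\to\int w_0^{-q/p}\,d\lambda$, so that (4.3) delivers $d_p(\mu_n)\to d_p(\mu_0)$.

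For (R3) I plan to establish (R2) and invoke Theorem 4.5. The extra hypothesis $\mu_0\ll\mu_n$ forces $\mu_{0,\sigma}\ll\mu_{n,\sigma}$, so $\phi_{n,p}=1$ $\mu_{0,\sigma}$-a.e.\ and the singular part of $\mu_0$ contributes nothing to the defining integral, leaving
\begin{equation*}
  d_p(\mu_n,\mu_0) = \left(\int w_n^{-q/p}\,d\lambda\right)^{-p}\int w_n^{-q}w_0\,d\lambda.
\end{equation*}
The conjugate-exponent identity $-q/p=1-q$ gives the domination $w_n^{-q}w_0\leq c^{-q}w_0^{1-q}=c^{-q}w_0^{-q/p}\in L^1(\lambda)$, and dominated convergence yields $\int w_n^{-q}w_0\,d\lambda\to\int w_0^{-q/p}\,d\lambda$. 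Combining with the prefactor limit, $d_p(\mu_n,\mu_0)\to(\int w_0^{-q/p}\,d\lambda)^{1-p}=(\int w_0^{-q/p}\,d\lambda)^{-p/q}=d_p(\mu_0)$ (using $1-p=-p/q$), which is (R2); Theorem 4.5 then yields (R3). The degenerate case $d_p(\mu_0)=0$ is disposed of by $d_p(\mu_n,\mu_0)\leq c^{-1}d_p(\mu_n)\to 0$ together with Proposition 3.8.

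The main technical obstacle is the bookkeeping around the Lebesgue decomposition: translating $\rm m$-convergence of the measures into $\lambda$-convergence of their absolutely continuous densities (via the judicious choice $\lambda\ll\nu$ in Lemma 2.6), and checking that the normalisation $\phi_{n,p}=1$ on the singular support of $\mu_n$, combined with $\mu_0\ll\mu_n$, really makes the singular part of $\mu_0$ inert in the formula for $d_p(\mu_n,\mu_0)$. Once these are set up, the conjugate-exponent arithmetic ($1-q=-q/p$, $1-p=-p/q$) makes the dominated-convergence bounds align precisely.
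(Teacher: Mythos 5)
Your proof is correct and follows essentially the same route as the paper: Corollary 4.7 disposes of the case $d_p(\mu_0;S)=0$, dominated convergence with the bound $w_n^{-q/p}\leq c^{-q/p}w_0^{-q/p}$ (justified by passing to a.e.-convergent subsequences) gives (R1) via (4.3), and the same domination applied to (4.11) gives (R2), whence (R3) by Theorem 4.5. The only inessential difference is your extra bookkeeping with the Lebesgue decomposition and the choice of $\nu$ in Lemma 2.6, which is superfluous here since Section 4 works under the standing assumption that all $\mu_k$ are absolutely continuous with respect to $\lambda$ (so one may simply take $\nu=\lambda$).
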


\begin{proof}
To prove (R1), by Corollary \ref{c4.7} we can assume that $d_p(\mu_0)>0$ or, equivalently, $w_0^{-\frac{q}{p}} \in L^1(\lambda)$. Now, apply Lebesgue's dominated convergence theorem. To prove (R3), it is enough to prove (R2), cf.~Theorem \ref{t4.5}. Since $d_p(\mu_n,\mu_0) \leq \frac{1}{c} d_p(\mu_n)$, the relation (R2) follows in case $d_p(\mu_0)=0$ from the first part of the corollary. Let $d_p(\mu_0) >0$ or, what is the same, $w^{-\frac{q}{p}}_0 \in L^1(\lambda)$. Then $\lim \int w^{-\frac{q}{p}}_n\, d\lambda =\int w^{-\frac{q}{p}}_0 \, d\lambda$ by Lebesgue's dominated convergence theorem. Thus, from (\ref{(e4.11)}) we get $\lim d_p(\mu_n,\mu_0)=( \int w^{-\frac{q}{p}}_0 \, d\lambda)^{-p} \cdot \int w_0^{-\frac{q}{p}} \, d\lambda = (\int w_0^{-\frac{q}{p}} \, d\lambda)^{-\frac{p}{q}} = d_p(\mu_0)$
\end{proof}

A simple example $w_0=1$, $w_n= \frac{1}{2} \cdot 1_{[0,\frac{1}{n})} + 1_{[\frac{1}{n},2\pi)}$ on $\Gamma=[0,2\pi]$, cf.~Example \ref{e3.2}, reveals that the first assertion of the preceding corollary is false for $p=1$. Moreover, the following example shows that the convergence in measure cannot be replaced with weak convergence. 

\begin{example}   \label{e4.11}
Let $\Gamma = [0,2\pi)$ as in Example \ref{e3.5}. Choose $w_0=2$, $w_n(\gamma)= 2 + \cos(n\gamma)$, $\gamma \in[0,2\pi)$, $n \in \mathbb N$. Then $w_n \geq \frac{1}{2} w_0$, w-$\lim w_n = w_0$ by a well-known property of Fourier coefficients and simple computations yield $d_2(\mu_0)=2$, $d_2(\mu_n)= \sqrt{3}$ and $d_2(\mu_n,\mu_0)=\frac{4}{\sqrt{3}}$.
\end{example}


\section{Prediction $m$ steps ahead}

The goal of the present section is $m$-steps-ahead prediction, which has played a central role from the very beginning of the prediction theory. To avoid several transformations to complex conjugates it is more convenient to study the $m$-steps backwards prediction problem instead. So, let $G=\mathbb Z$, $\Gamma=[0,2\pi)$, $\lambda$ be the  normalized Lebesgue measure on $\Gamma$, and the character $e_x$ for $x \in \mathbb Z$ be defined as described  in Section 3. For $m \in \mathbb N$, denote by $S_m$ the set $S_m:= (m-1) +{\mathbb N} =  \{ m,m+1, ... \}$. Let $\mu \in \mathcal M$ and $d_\mu=w \, d\lambda +\mu_\sigma$ be its decomposition into its absolutely continuous and singular parts. Similarly to (\ref{(e4.1)}) the equality 
\begin{equation}   \label{(e5.1)}
   d_p(\mu;S_m) := d_p(w \, d\lambda; S_m)
\end{equation}
is satisfied for all $p \in (0,\infty)$ and $m\in \mathbb N$. For $p=2$ and $m=1$, the relation (\ref{(e5.1)}) is a well-known result, cf.~\cite{[10]}. An extension to $p \in [1,\infty)$ and $m \in \mathbb N$ was given by Koosis \cite[Section VII.C]{[18]} with the aid of some technical facts from function theory. A different proof and at the same time an extension to all $p \in (0,\infty)$ can be derived from \cite{[13]}, see the remarks after formula (\ref{(e4.1)}) of the present paper. Similarly to Section 4, we assume $\mu_\sigma=0$ throughout the present section.

The celebrated Szeg\"o theorem asserts that for all $p\in (0,\infty)$, $d_p(\mu;S_1)$ is equal to the geometric mean of $w$, i.e.
\begin{equation}   \label{(e5.2)}
   d_p(\mu;S_1) = \exp \int \log w \, d\lambda \, ,
\end{equation}
where the right hand side of (\ref{(e5.1)}) has to be understood as $0$ if $\log w \not\in L^1(\lambda)$, in particular, if $w=0$ on a set of positive $\lambda$-measure, cf.~\cite[p.~257]{[1]}. Setting $\log^+ w := \max\{\log w, 0 \}$ and $\log^- w := \max \{  -\log w, 0 \}$, we mention that $\log w= \log^+ w -\log^- w$ and that $\log w \in L^1(\lambda)$ if and only if $\log^- w \in L^1(\lambda)$ since $\log^+ w \leq w$ is integrable w.r.t. $\lambda$. Let $d_{\mu_k}:= w_k \, d\lambda$, $k \in {\mathbb N}_0$. Using (\ref{(e5.2)}) we can construct two counterexamples concerning the set $S_1$. 
Example \ref{e5.1} shows that even for a uniformly convergent sequence $(w_n)$ equality (R1) can fail. Example \ref{e5.2} reveals that analogues of Theorem \ref{t4.6}, Corollary \ref{c4.7} and Corollary \ref{c4.10} are not true for one-step-ahead prediction, and that the assertions of Theorem \ref{t3.1} and Corollary \ref{c3.3} can fail if weak* convergence of the sequence $(\mu_n)$ is replaced by its convergence in measure. Note, however, that if m-$\lim \mu_n=\mu_0$ and $\lim \int \log^+ w_n \, d\lambda=\int \log^+ w_0 \, d\lambda$, then inequality (\ref{(e3.3)}) issatisfied for $S=S_1$ since $\lim \int \log w_n\, d\lambda = \lim \int (\log^+ w_n - \log^- w_n)\, d\lambda \leq \int (\log^+ w_0 - \log^- w_0)\, d\lambda = \int \log w_0 \, d\lambda$ by Fatou's lemma, cf.~\cite[Thm.~2.3.3]{[20]}.

\begin{example}  \label{e5.1}
Let $w_0(\gamma)= \exp \frac{1}{\gamma \log^3 \gamma}$, $w_n(\gamma)= 1_{[0,\frac{1}{n})}(\gamma) \cdot \exp \frac{1}{\gamma \log \gamma} + 1_{[\frac{1}{n},2\pi)}(\gamma) w_0(\gamma)$, $\gamma \in [0,2\pi)$, $n \in \mathbb N$. From (\ref{(e5.2)}) we have $d_p(\mu_0;S_1) >0$ and $d_p(\mu_n;S_1) =0$. To show that $\lim w_n = w_0$ uniformly on $[0,2\pi)$ note first, that $0 \leq w_0-w_n \leq w_0$. Since $\frac{d(\gamma \log^3 \gamma)}{d \gamma}= (3+\log \gamma)\log^2\gamma$, the function $w_0$ is increasing on $(0,{\rm e}^{-3})$. Therefore, if $n > {\rm e}^3$ and $\gamma \in [0,\frac{1}{n})$, then  $| w_0(\gamma)-w_n(\gamma)| < \exp \left( - \frac{n}{\log^3 n}\right)$, which tends to zero for $n\to\infty$.
\end{example}

\begin{example}  \label{e5.2}
Set $w_0(\gamma):= \exp(-\frac{1}{\gamma})$, $\gamma \in [0,2\pi)$, $w_n:=\exp (n^2)\cdot 1_{[0,\frac{1}{n})} + w_0 \cdot 1_{[\frac{1}{n},2\pi)}$. Obviously, $w_n \geq w_0$, m-$\lim \mu_n =\mu_0$, and (\ref{(e5.2)}) yields $d_p(\mu_0;S_1)=0$ and $d_p(\mu_n;S_1)=\exp (n-\log n - 2 \log 2\pi)$, which tends to $\infty$ if $n \to\infty$. 
\end{example}

It is a simple consequence of (\ref{(e5.2)}) that the distance $d_p(\mu; S_m)$ is equal to zero if and only if $\log w \not\in L^1(\lambda)$, $p \in (0,\infty)$, $m\in \mathbb N$. The principal tool to study the case $d_p(\mu ; S_m) >0$ is Hardy space theory, cf.~the excellent books\cite{[6]}, \cite{[10]}, \cite{[18]}. If $\log w \in L^!(\lambda)$, the function $f$ defined by $f(z):= \frac{1}{2} \int \frac{e_1+z}{e_1-z} \log w \, d\lambda$ has a Taylor series
\begin{equation}   \label{(e5.3)}
     f(z) = \frac{a_0}{2} +\sum^\infty_{j=1} a_j z^j \,\, , |z|<1 \, ,
\end{equation}
where $a_j$ is the $j$-th Fourier coefficient of $\log w$, i.e. $a_j=\int e_{-j} \log w\, d\lambda$. The function
\begin{equation}   \label{(e5.4)}
    h:= \exp f
\end{equation}
plays a key role in prediction theory. It is an outer function of Hardy class $H^2$, $h(0)>0$, and the boundary function of $h$ on the unit circle, which is also denoted by $h$, satisfies the relation
\begin{equation}    \label{(e5.5)}
   | h ({\rm e}^{i\gamma}) |^2 =w(\gamma)
\end{equation}
for $\lambda$-almost any $\gamma \in [0,2\pi)$. Setting
\begin{equation}    \label{(e5.6)}
   b_j := \frac{h^{(j)}(0)}{j!} \,\, , j \in {\mathbb N}_0 \, ,
\end{equation}
the function $h$ has a Taylor expansion $h(z) = \sum_{j=0}^\infty b_j z^j$, $|z|<1$. 

For $m \in {\mathbb N}$ and a power series $g(z)=\sum_{j=0}^\infty c_j z^j$ denote by $\Pi^{(m)}(g)$ the polynomial 
\begin{equation}   \label{(e5.7)}
   (\Pi^{(m)}(g))(z) := \sum_{j=0}^{m-1} c_j z^j \,\, , z \in \mathbb C \,.
\end{equation}
From (\ref{(e5.4)}) it is clear that $\Pi^{(m)}(h)$ is defined by $\Pi^{(m)}(f)$ unambiguously. The following lemma implies conversely that $\Pi^{(m)}(h)$ defines $\Pi^{(m)}(f)$ uniquely.

\begin{lemma}   \label{l5.3}
   For any $r \in {\mathbb N}_0$, the $r$-th Fourier coefficient $a_r$ of $\log w$ is uniquely defined by the first $r+1$ Taylor coefficients $b_0, ... ,b_r$ of $h$.
\end{lemma}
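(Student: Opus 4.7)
The plan is to exploit the differential identity $h' = h \cdot f'$, which follows by differentiating the defining equation $h = \exp f$, and to extract a triangular recurrence connecting the Taylor coefficients $(b_j)$ of $h$ to the Fourier coefficients $(a_j)$ of $\log w$.

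First I would dispose of the base case $r=0$. Evaluating (\ref{(e5.4)}) at $z=0$ gives $b_0 = h(0) = \exp(f(0)) = \exp(a_0/2)$, and since the paper records that $h(0) > 0$, we may invert to obtain $a_0 = 2\log b_0$, so $a_0$ is determined by $b_0$ alone.

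For the inductive step I would write down the Taylor expansion of $f'$ from (\ref{(e5.3)}), namely $f'(z) = \sum_{j=0}^\infty (j+1) a_{j+1} z^j$, and similarly $h'(z) = \sum_{j=0}^\infty (j+1) b_{j+1} z^j$. Comparing the coefficient of $z^r$ on both sides of the identity $h'(z) = h(z) f'(z)$, which is valid on the unit disc because both sides are convergent power series, yields the Cauchy-product relation
\begin{equation*}
   (r+1)\, b_{r+1} \;=\; \sum_{l=0}^{r} (l+1)\, a_{l+1}\, b_{r-l}
   \;=\; (r+1) a_{r+1} b_0 \;+\; \sum_{l=0}^{r-1} (l+1)\, a_{l+1}\, b_{r-l}
\end{equation*}
for every $r \in \mathbb{N}_0$. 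Since $b_0 > 0$, this can be solved for $a_{r+1}$:
\begin{equation*}
   a_{r+1} \;=\; \frac{1}{(r+1) b_0}\left[(r+1)\, b_{r+1} \;-\; \sum_{l=0}^{r-1} (l+1)\, a_{l+1}\, b_{r-l}\right].
\end{equation*}

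Finally I would close the argument by induction on $r$: assuming that $a_0, a_1, \ldots, a_r$ are already determined by $b_0, \ldots, b_r$, the displayed formula (with the index shifted by one) expresses $a_{r+1}$ as a function of $b_0, \ldots, b_{r+1}$ and $a_1, \ldots, a_r$, hence ultimately of $b_0, \ldots, b_{r+1}$ alone. There is no real obstacle here beyond bookkeeping; the only point requiring care is the non-vanishing of $b_0$, which is guaranteed by the outer-function property $h(0)>0$ noted in the paragraph preceding (\ref{(e5.5)}), so the division in the recurrence is legitimate.
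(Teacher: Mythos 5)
Your proof is correct and follows essentially the same route as the paper: both arguments differentiate the relation $h=\exp f$ to obtain a triangular system linking the $b_j$ to the $a_j$, with $b_0=\exp(a_0/2)>0$ handling the base case and permitting the division. Your version, based on the single identity $h'=f'h$ and the Cauchy product, merely packages the paper's repeated-differentiation induction into a cleaner explicit recurrence.
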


\begin{proof}
Relation (\ref{(e5.4)}) can be written as $\exp ({-\frac{a_0}{2}})h(z)=\exp (\sum_{j=1}^\infty a_j z^j)$. Differentiating we can prove by induction that $\exp ({-\frac{a_0}{2}})h'(z) = \sum_{j=1}^\infty j a_j z^{j-1} \exp (\sum_{j=1}^\infty a_j z^j)$ and 
\begin{eqnarray*}
    \exp ({-\frac{a_0}{2}})h^{(r)}(z) & = & \Bigg[ Q_r \left( \sum_{j=1}^\infty ja_j z^{j-1}, ... , \sum_{j=r-1}^\infty j(j-1) ... (j-r+2)a_j z^{j-r+1} \right) \\
                                                     & + & \sum_{j=r}^\infty j (j-1) ... (j-r+1)a_j z^{j-r} \Bigg] \cdot  \exp \left( \sum_{j=1}^\infty a_j z^j \right)
\end{eqnarray*}
for some polynomial $Q_r$ of $(r-1)$ variables, $r \geq 2$. We derive $\exp ({-\frac{a_0}{2}})h(0)=1$, $\exp ({-\frac{a_0}{2}})h'(0)=a_1$, $\exp ({-\frac{a_0}{2}})h^{(r)}(0) = q_r(a_1, ... ,a_{r-1}) + r! a_r$ for some polynomial $q_r$ of $(r-1)$ variables, $r \geq 2$, which yields the assertion by (\ref{(e5.6)}).
\end{proof}

 Kolmogorov \cite[Thm.~2]{[17]} obtained a formula for $d_2(\mu ; S_m)$ in terms of the Fourier coefficients of $\log w$, which was re-discovered and slightly elaborated by Nakazi and Takahashi \cite{[25]}. Kolmogorov's proof makes use of Hilbert space methods and Hardy space theory, whereas the proof in \cite{[25]} is purely function theoretic and in parts can be generalized to arbitrary $p\in (0,\infty)$. Applying Nakazi's and Takahashi's method we derive an expression for $d_p(\mu ; S_m)$ which can be helpful for stability investigations. 

\begin{lemma}   \label{l5.4}
   Let $m\in \mathbb N$ and $p\in (0,\infty)$. If $\log w \in L^1(\lambda)$, then
\begin{equation}   \label{(e5.8)}
   d_p(\mu ; S_m) = \inf \left\{ \int | 1-\tau |^p \cdot | \exp \Pi^{(m)}(f)|^2 \, d\lambda : \tau \in {\mathcal T}(S_m) \right\} \, .
\end{equation}
\end{lemma}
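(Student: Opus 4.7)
The plan is to exploit the Szeg\H{o} factorization of $h = \exp f$ and the use of fractional powers of an outer function.

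\emph{Setup.} Let $g := f - \Pi^{(m)}(f)$; then $g$ is holomorphic on the open unit disk with Taylor expansion $\sum_{j \geq m} a_j z^j = O(z^m)$ at the origin. Put $h_m := \exp \Pi^{(m)}(f)$ and $k := \exp g$, so that $h = h_m k$. Since $\Pi^{(m)}(f)$ is a polynomial, $h_m$ and $h_m^{-1}$ are continuous on the closed disk, and hence $v := |h_m|^2$ is a positive continuous function on $\Gamma$. The function $k$ is outer in $H^2$, has Taylor expansion $1 + O(z^m)$ at $0$, and satisfies the a.e.\ boundary identity $v |k|^2 = |h|^2 = w$. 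The claim thus reduces to showing
\begin{equation*}
\inf_{\tau \in \mathcal{T}(S_m)} \int |1-\tau|^p v |k|^2 \, d\lambda \;=\; \inf_{\tau \in \mathcal{T}(S_m)} \int |1-\tau|^p v \, d\lambda.
\end{equation*}

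\emph{Key device.} Since $g$ is holomorphic on the open disk, the fractional powers $k^{\pm 2/p} := \exp(\pm (2/p) g)$ are well-defined outer functions whose Taylor expansions at $0$ have the form $1 + O(z^m)$, and they satisfy the a.e.\ boundary identity $|k^{\pm 2/p}|^p = \exp(\pm 2 \operatorname{Re} g) = |k|^{\pm 2}$.

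\emph{Strategy.} Both inequalities in the reduced identity are obtained by a polynomial-multiplier approximation. For ``$\geq$'', fix $\tau \in \mathcal{T}(S_m)$. Since $k^{2/p} \in H^p(\lambda)$ (because $\int |k^{2/p}|^p \, d\lambda = \int |k|^2 \, d\lambda < \infty$), the Fej\'er means $K_n$ of its Taylor series converge to $k^{2/p}$ in $L^p(\lambda)$, and because $k^{2/p}$ has Taylor coefficients $1, 0, \ldots, 0$ at the indices $0, 1, \ldots, m-1$, every $K_n$ has the shape $1 + O(z^m)$. Define $\tau_n \in \mathcal{T}(S_m)$ by $1 - \tau_n := (1-\tau) K_n$; boundedness of $v$ and of $1-\tau$ yields
\begin{equation*}
\int |1-\tau_n|^p v \, d\lambda \;\longrightarrow\; \int |(1-\tau) k^{2/p}|^p v \, d\lambda = \int |1-\tau|^p |k|^2 v \, d\lambda = \int |1-\tau|^p w \, d\lambda,
\end{equation*}
and taking infima over $\tau$ gives ``$\geq$''. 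The reverse inequality is obtained by the mirror construction using $k^{-2/p}$ in place of $k^{2/p}$.

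\emph{Main obstacle.} The reverse direction requires a polynomial approximation of $k^{-2/p}$ in the weighted space $L^p(w \, d\lambda)$, where the weight $w$ need not be bounded above or below; density of polynomials in $L^p(\lambda)$ does not directly suffice. The remedy exploits the algebraic identity $k^{-2/p} h^{2/p} = h_m^{2/p}$ (immediate from $h = h_m k$) together with $|h^{2/p}|^p = w$: the rewriting
\begin{equation*}
\int |K_n - k^{-2/p}|^p w \, d\lambda \;=\; \int |K_n h^{2/p} - h_m^{2/p}|^p \, d\lambda
\end{equation*}
transforms the weighted question into an unweighted one, namely approximation of the bounded continuous function $h_m^{2/p}$ by $K_n h^{2/p}$ in $L^p(\lambda)$. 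This can be carried out because $h^{2/p} \in H^p$ is outer, so that polynomial multiples of $h^{2/p}$ are dense in $H^p$ for every $p \in (0, \infty)$; moreover the first $m$ Taylor coefficients of $h^{2/p}$ and $h_m^{2/p}$ coincide (since $k(z) = 1 + O(z^m)$), which allows the approximating polynomials $K_n$ to be chosen of shape $1 + O(z^m)$ as required.
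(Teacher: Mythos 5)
Your proof follows essentially the same route as the paper's: the factorization $h=\exp\Pi^{(m)}(f)\cdot\exp g$, the observation that $\exp(\pm\tfrac{2}{p}g)$ is outer with Taylor expansion $1+O(z^m)$, and the equivalence of the $L^p(\lambda)$ and $L^p(|\exp\Pi^{(m)}(f)|^2\,d\lambda)$ metrics; you merely unpack the paper's appeal to ``outerness of $\exp(\tfrac2p g)$'' into two explicit polynomial-multiplier approximations. The one point to repair is the use of Fej\'er means, which need not converge in $H^p$ for small $p\in(0,1)$; since the lemma covers all $p\in(0,\infty)$, replace them by the standard dilation argument (approximate $k^{2/p}$ by partial sums of $k^{2/p}(r\cdot)$, which still have the shape $1+O(z^m)$), and similarly make sure the Beurling-type density of polynomial multiples of an outer function is invoked in its $H^p$, $0<p<\infty$, form.
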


\begin{proof}
Set $g:= f-\Pi^{(m)}(f)$ and note that
\begin{equation}   \label{(e5.9)}
   \exp \left( \frac{2}{p} g \right) = 1+g_1  \, ,
\end{equation}
where $g_1$ has a Taylor series of the form $g_1(z) = \sum_{j=m}^\infty c_j z^j$, $|z|< 1$. The function $\exp \Pi^{(m)} (f)$ is continuous and root-free, which yields
\begin{equation}   \label{(e5.10)}
   c \leq | \exp \Pi^{(m)} (f) | \leq C \,\, , |z|\leq 1 \, ,
\end{equation}
for some positive constants $c$ and $C$. From (\ref{(e5.4)}) we obtain
\begin{equation}  \label{(e5.11)}
   h =\exp \left( {\rm\Pi}^{(m)}(f) \right) \cdot \exp g \, .
\end{equation}
Taking into account (\ref{(e5.9)}), (\ref{(e5.10)}) and (\ref{(e5.11)}) we derive
\begin{equation}   \label{(e5.12)}
   g_1 \in [ {\mathcal T ( S_m)}]_{\lambda,p} =e_m {H}^p \, .
\end{equation}
Since $h$ is outer, (\ref{(e5.11)}) implies that $\exp g$ is outer as well. From (\ref{(e5.10)}) we conclude that the metrics of $L^p(\lambda)$ and of $L^p( | \exp \Pi^{(m)}(f) |^2 \, d\lambda)$ are equivalent. Therefore, 
\begin{eqnarray*}
d_p(\mu , S_m) & = & \inf \left\{ \int \left| \exp \left( \frac{2}{p}g \right) - \tau \cdot \exp  \left( \frac{2}{p}g \right) \right|^p \left| \exp \Pi^{(m)}(f) \right|^2  d\lambda : \tau \in {\mathcal T}(S_m) \right\} \\
                       & = & \inf \left\{ \int |1-\tau|^p |\exp \Pi^{(m)}(f) |^2 \, d\lambda : \tau \in {\mathcal T}(S_m) \right\} 
\end{eqnarray*}
by (\ref{(e5.5)}), (\ref{(e5.9)}) and the outerness of $\exp ( \frac{2}{p} g )$.
\end{proof}

Using Lemma \ref{l5.3} we can give a sufficient condition for (R1) being true in case $S=S_m$. To state the result we introduce a function $\widetilde{w}_n := \min (w_0,w_n)$, $n \in \mathbb N$, and make the following convention: for $a,b \in [0,\infty)$, $a \geq b$, we set $\log \frac{a}{b} := 0$ if $a=0$ and $\log \frac{a}{b}:= \infty$  if $a > 0$ and $b=0$. 

\begin{theorem}   \label{t5.5}
   Let $\rm w^*$-$\lim \mu_n =\mu_0$. If
\begin{equation}   \label{(e5.13)}
   \lim \int \log \frac{w_0}{\widetilde{w}_n} \, d\lambda = 0 \, ,
\end{equation}
relation (R1) is satisfied for $S=S_m$, $m \in \mathbb N$, and $p \in (0,\infty)$.
\end{theorem}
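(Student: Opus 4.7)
The plan is to combine the $\limsup$ bound of Theorem \ref{t3.1} with an independent $\liminf$ estimate obtained by replacing $\mu_n$ with the dominated auxiliary measures $\widetilde{\mu}_n := \widetilde{w}_n \, d\lambda$, for which the trigonometric approximation problem can be analyzed explicitly via Lemma \ref{l5.4}.

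Theorem \ref{t3.1}, valid because $\rm w^*$-$\lim \mu_n = \mu_0$, immediately yields $\limsup d_p(\mu_n;S_m) \leq d_p(\mu_0;S_m)$. On the other hand, $\widetilde{w}_n \leq w_n$ gives the monotonicity inequality $d_p(\widetilde{\mu}_n;S_m) \leq d_p(\mu_n;S_m)$, so the theorem reduces to showing
\[
   \lim d_p(\widetilde{\mu}_n;S_m) = d_p(\mu_0;S_m).
\]
I would split into two cases according to whether $\log w_0 \in L^1(\lambda)$. If not, Szeg\"o's formula (\ref{(e5.2)}) gives $d_p(\mu_0;S_m)=0$ and the assertion is already in hand. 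In the main case $\log w_0 \in L^1(\lambda)$, hypothesis (\ref{(e5.13)}) forces $\log \widetilde{w}_n \in L^1(\lambda)$ for $n$ large enough (writing $\log \widetilde{w}_n = \log w_0 - \log(w_0/\widetilde{w}_n)$) and, because $\log(w_0/\widetilde{w}_n) \geq 0$ has $L^1$-norm equal to its integral, one obtains $\log \widetilde{w}_n \to \log w_0$ in $L^1(\lambda)$.

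The quantitative heart of the proof is Fourier-coefficient convergence. For any $j \in {\mathbb N}_0$, writing $a_j(g) := \int e_{-j}\, g \, d\lambda$, one has
\[
   \bigl| a_j(\log w_0) - a_j(\log \widetilde{w}_n) \bigr|
   \;=\; \left| \int e_{-j} \log \frac{w_0}{\widetilde{w}_n} \, d\lambda \right|
   \;\leq\; \int \log \frac{w_0}{\widetilde{w}_n} \, d\lambda
   \;\xrightarrow[n\to\infty]{}\; 0.
\]
In particular the first $m$ Fourier coefficients converge, so by (\ref{(e5.3)}) and (\ref{(e5.7)}) the truncations $\Pi^{(m)}(f_{\widetilde{\mu}_n})$ are polynomials of degree $<m$ whose coefficients converge to those of $\Pi^{(m)}(f_{\mu_0})$; hence they converge uniformly on the closed unit disk. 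Consequently the continuous, strictly positive boundary weights $W_n := |\exp \Pi^{(m)}(f_{\widetilde{\mu}_n})|^2$ converge uniformly on $[0,2\pi)$ to $W_0 := |\exp \Pi^{(m)}(f_{\mu_0})|^2$, and are uniformly bounded above and below away from $0$ and $\infty$.

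By Lemma \ref{l5.4}, $d_p(\widetilde{\mu}_n;S_m) = \inf_{\tau \in {\mathcal T}(S_m)} \int |1-\tau|^p W_n \, d\lambda$, and likewise for $d_p(\mu_0;S_m)$ with $W_0$. For any $\varepsilon>0$ we eventually have $(1-\varepsilon)W_0 \leq W_n \leq (1+\varepsilon)W_0$, whence $(1-\varepsilon) d_p(\mu_0;S_m) \leq d_p(\widetilde{\mu}_n;S_m) \leq (1+\varepsilon) d_p(\mu_0;S_m)$; letting $\varepsilon \downarrow 0$ completes the reduction and yields the theorem. The main technical obstacle is the bookkeeping around the conventions for $\log(a/b)$ when $w_0$ or $\widetilde{w}_n$ vanishes on a set of positive $\lambda$-measure: one must verify that (\ref{(e5.13)}) forces any $w_0$-null set to be shared with $\widetilde{w}_n$, so that the pathological configurations land harmlessly in the trivial case $\log w_0 \notin L^1(\lambda)$ handled via Szeg\"o's formula.
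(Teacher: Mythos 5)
Your proof is correct, but it is organized differently from the paper's, and the difference is worth noting. The paper first proves an auxiliary Lemma \ref{l5.6} asserting that, under both hypotheses, $\lim \int |\log w_0 - \log w_n|\,d\lambda = 0$; the proof of that lemma itself needs the $\rm w^*$-convergence, because the upper bound $\limsup \int \log w_n\,d\lambda \le \int \log w_0\,d\lambda$ is extracted from Theorem \ref{t3.1} applied with $S=S_1$ via Szeg\H{o}'s formula (\ref{(e5.2)}), while (\ref{(e5.13)}) supplies the matching lower bound. With the $L^1$-convergence of $\log w_n$ in hand, the paper feeds the actual densities $w_n$ into Lemma \ref{l5.4}, obtains uniform convergence of $|\exp \Pi^{(m)}(f_n)|^2$ and a minorization $|\exp \Pi^{(m)}(f_n)|^2 \ge c_n |\exp \Pi^{(m)}(f_0)|^2$ with $\liminf c_n \ge 1$, and concludes by Corollary \ref{c3.3}(ii). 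You instead decouple the two hypotheses completely: $\rm w^*$-convergence is used only once, directly at the level of $S_m$, to get $\limsup d_p(\mu_n;S_m) \le d_p(\mu_0;S_m)$ from Theorem \ref{t3.1}; and (\ref{(e5.13)}) alone drives the lower bound through the minorant measures $\widetilde{w}_n\,d\lambda$, for which $\log \widetilde{w}_n \to \log w_0$ in $L^1(\lambda)$ is immediate (the difference is the nonnegative function $\log (w_0/\widetilde{w}_n)$) and Lemma \ref{l5.4} gives a two-sided sandwich $(1-\varepsilon)d_p(\mu_0;S_m) \le d_p(\widetilde{\mu}_n;S_m) \le (1+\varepsilon)d_p(\mu_0;S_m)$. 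This bypasses Lemma \ref{l5.6} entirely and makes transparent which hypothesis is responsible for which inequality; the paper's route buys the stronger intermediate fact (\ref{(e5.15)}) about the full densities $w_n$, which it reuses later in the proof of Theorem \ref{t5.13}. Your handling of the degenerate conventions is also sound: when $\log w_0 \in L^1(\lambda)$ one has $w_0>0$ a.e., and finiteness of $\int \log(w_0/\widetilde{w}_n)\,d\lambda$ for large $n$ then forces $\widetilde{w}_n>0$ a.e., so the identity $\log \widetilde{w}_n = \log w_0 - \log(w_0/\widetilde{w}_n)$ is legitimate.
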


 To prove Theorem \ref{t5.5} we need an auxiliary result.

\begin{lemma}    \label{l5.6}
   If $\rm w^*$-$\lim \mu_n =\mu_0$, if (\ref{(e5.13)}) is satisfied and  
\begin{equation}   \label{(e5.14)}
   \log w_0 \in L^1(\lambda) \, ,
\end{equation}
then
\begin{equation}   \label{(e5.15)}
   \lim \int | \log w_0 - \log w_n | \, d\lambda = 0 \, .
\end{equation}
\end{lemma}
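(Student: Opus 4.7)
The strategy is to upgrade the hypotheses to full $L^1(\lambda)$-convergence $w_n\to w_0$, then split $|\log w_0-\log w_n|$ into its two non-negative parts: one piece is controlled directly by (\ref{(e5.13)}), while the other reduces to $\int\log w_n\,d\lambda\to\int\log w_0\,d\lambda$, which in turn follows from a matching liminf and limsup argument.

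For the $L^1$-convergence of the weights, assumption (\ref{(e5.14)}) forces $w_0>0$ $\lambda$-a.e., and from (\ref{(e5.13)}) the non-negative functions $\log(w_0/\widetilde w_n)$ tend to $0$ in $L^1(\lambda)$, hence in measure; continuity of $\exp$ then yields $\widetilde w_n\to w_0$ in measure. Since $0\leq\widetilde w_n\leq w_0\in L^1(\lambda)$, the dominated convergence theorem (applied along any a.e.-convergent sub-subsequence, with the common limit forcing convergence of the full sequence) gives $\int\widetilde w_n\,d\lambda\to\int w_0\,d\lambda$. As $\Gamma=[0,2\pi)$ is compact, the constant $1$ is a legitimate test function, so weak* convergence of $(\mu_n)$ delivers $\int w_n\,d\lambda\to\int w_0\,d\lambda$. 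Noting $w_n-\widetilde w_n=(w_n-w_0)_+$ and $w_0-\widetilde w_n=(w_0-w_n)_+$ and subtracting, one obtains $\|w_0-w_n\|_{L^1(\lambda)}\to 0$.

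Now write $|\log w_0-\log w_n|=(\log w_0-\log w_n)_+ + (\log w_n-\log w_0)_+$. A direct case analysis identifies $(\log w_0-\log w_n)_+$ with $\log(w_0/\widetilde w_n)$, so its integral tends to $0$ by (\ref{(e5.13)}). Using the identity $(\log w_n-\log w_0)_+=(\log w_n-\log w_0)+(\log w_0-\log w_n)_+$, it remains to prove $\int\log w_n\,d\lambda\to\int\log w_0\,d\lambda$. For the liminf, $w_n\geq\widetilde w_n$ gives $\log w_n\geq\log\widetilde w_n$, and the equality $\int\log\widetilde w_n\,d\lambda=\int\log w_0\,d\lambda-\int\log(w_0/\widetilde w_n)\,d\lambda$ together with (\ref{(e5.13)}) yields $\liminf\int\log w_n\,d\lambda\geq\int\log w_0\,d\lambda$. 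For the limsup, split $\log w_n=\log^+ w_n-\log^- w_n$: since $\log^+ w_n\leq w_n$ and $(w_n)$ is uniformly integrable by the previous paragraph, Vitali's convergence theorem combined with $w_n\to w_0$ in measure gives $\int\log^+ w_n\,d\lambda\to\int\log^+ w_0\,d\lambda$, while Fatou's lemma applied to $\log^- w_n\to\log^- w_0$ in measure (itself a consequence of $w_n\to w_0$ in measure and $w_0>0$ a.e.) furnishes $\liminf\int\log^- w_n\,d\lambda\geq\int\log^- w_0\,d\lambda$.

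The delicate point is the upper bound on $\int\log w_n\,d\lambda$: because $\log^- w_n$ can explode wherever $w_n$ becomes small, no pointwise domination is available, so one must first combine weak* convergence with (\ref{(e5.13)}) to gain the full $L^1$-convergence of $(w_n)$, thereby activating Vitali on $\log^+ w_n$; Fatou then suffices for $\log^-$, and the matching lower bound is provided by the auxiliary function $\widetilde w_n$.
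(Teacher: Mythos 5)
Your proof is correct, but it reaches the crucial upper bound $\limsup\int\log w_n\,d\lambda\le\int\log w_0\,d\lambda$ by a genuinely different route than the paper. The paper gets this in one line: it applies Theorem \ref{t3.1} with $S=S_1$ and reads off the conclusion from Szeg\"o's formula (\ref{(e5.2)}), $d_p(\mu;S_1)=\exp\int\log w\,d\lambda$, so the prediction-theoretic machinery already in place does all the work; the rest of the paper's proof is exactly your decomposition of $\int(\log w_0-\log w_n)\,d\lambda$ into $\int(\log w_0-\log\widetilde w_n)\,d\lambda$ (your $(\log w_0-\log w_n)_+$ term, controlled by (\ref{(e5.13)})) plus the integral over $\{w_0<w_n\}$ (your $-(\log w_n-\log w_0)_+$ term), and the same liminf bound via $w_n\ge\widetilde w_n$. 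You instead first upgrade $\mathrm{w}^*$-convergence plus (\ref{(e5.13)}) to $\lim\|w_0-w_n\|_{L^1(\lambda)}=0$ --- using only the total masses $\int w_n\,d\lambda\to\int w_0\,d\lambda$ and the identities $w_n-\widetilde w_n=(w_n-w_0)_+$, $w_0-\widetilde w_n=(w_0-w_n)_+$ --- and then run Vitali on $\log^+w_n$ and Fatou on $\log^-w_n$; this is essentially the $\log^\pm$/Fatou argument the authors themselves sketch in the paragraph preceding Example \ref{e5.1} for a related claim. Your version is longer and purely real-analytic, but it is self-contained (no appeal to Szeg\"o's theorem) and yields the noteworthy by-product that under the hypotheses of the lemma the convergence of $(\mu_n)$ is actually in total-variation norm. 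One cosmetic remark: you should note explicitly that $\log w_n\in L^1(\lambda)$ for large $n$ (it follows from $\log^-w_n\le\log^-\widetilde w_n=\log^-w_0+\log(w_0/\widetilde w_n)\cdot 1_{\{w_0\ge 1>\widetilde w_n\}}+\dots$, or more simply from $\log\widetilde w_n\le\log w_n\le w_n$), so that the quantities you manipulate are finite; the paper records this at the outset.
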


\begin{proof}
By (\ref{(e5.13)}) and (\ref{(e5.14)}), the function $\log \widetilde{w}_n$ belongs to $L^1(\lambda)$ for all $n$ large enough, and
\begin{equation}    \label{(e5.16)}
   \lim \int \log \widetilde{w}_n \, d\lambda = \int \log w_0 \, d\lambda \, .
\end{equation}
Therefore, $\lim\inf \int \log w_n \, d\lambda \geq \int \log w_0 \, d\lambda$. Choosing $S=S_1$ in Theorem \ref{t3.1} and taking into account (\ref{(e5.2)}), we get $\lim\sup\int \log w_n \, d\lambda \leq \int \log w_0\, d\lambda$. Hence, $ \lim \int ( \log w_0 - \log w_n ) \, d\lambda = 0$. Since 
\[
   \int ( \log w_0 - \log w_n ) \, d\lambda = \int ( \log w_0 - \log \widetilde{w}_n )\, d\lambda +  \int_{w_0 < w_n} ( \log w_0 - \log w_n ) \, d\lambda \, , 
\]
relation (\ref{(e5.16)}) yields $\lim \int_{w_0 < w_n} ( \log w_0 - \log w_n ) \, d\lambda =0$. We derive 
\begin{eqnarray*}
   \lim \int | \log w_0 - \log w_n | \, d\lambda & = &  \int ( \log w_0 - \log \widetilde{w}_n )\, d\lambda \\
                                                                   & - &  \lim \int_{w_0 < w_n} ( \log w_0 - \log w_n ) \, d\lambda =0 \, .
\end{eqnarray*}
\end{proof}

{\it Proof of Theorem \ref{t5.5}.} If $\log w_0 \not\in L^1(\lambda)$, then $d_p(\mu_o ; S_m)=0$ and the result follows from Corollary \ref{c3.3}(i). Let $\log w_0 \in L^1(\lambda)$. Relation (\ref{(e5.15)}) implies that $\log w_n \in L^1(\lambda)$ for all suffiently large $n$. Define $f_n$ by $f_n(z):= \frac{1}{2} \int \frac{e_1+z}{e_1-z} \log w_n \, d\lambda$, $|z|<1$, and $\Pi^{(m)}(f_n) $ for the corresponding power series, cf.~(\ref{(e5.3)}) and (\ref{(e5.7)}). From (\ref{(e5.15)}) it follows that the sequence $(| \exp \Pi^{(m)}(f_n) |^2 )_n$ converges to $| \exp \Pi^{(m)}(f_0) |^2$ uniformly on the unit circle. Thus, (\ref{(e5.10)}) yields the existence of a sequence $(c_n)$ of positive numbers such that $\lim\inf c_n \geq 1$ and $| \exp \Pi^{(m)}(f_n) |^2\geq c_n \cdot | \exp \Pi^{(m)}(f_0) |^2$. An application of Lemma \ref{l5.4} and Corollary \ref{c3.3}(ii) completes the proof.
$\Box$ \linebreak[4]

\begin{corollary}   \label{c5.7}
   If $\lim\| \mu_0-\mu_n\| =0$ and $\mu_n \geq c\mu_0$ for some positive constant $c$ and any $n \in \mathbb N$, then (R1) is true for all $S=S_m$ and all $p\in (0,\infty)$.
\end{corollary}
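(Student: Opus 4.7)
The plan is to reduce the corollary to a direct application of Theorem \ref{t5.5}, so the two facts to verify are that $\rm w^*$-$\lim \mu_n = \mu_0$ and that $\lim \int \log\frac{w_0}{\widetilde{w}_n}\, d\lambda = 0$. The first is immediate: for any bounded continuous $f$, $\bigl| \int f\, d\mu_n - \int f\, d\mu_0 \bigr| \leq \|f\|_\infty \|\mu_n - \mu_0\| \to 0$. Before addressing the second, I would invoke (\ref{(e5.1)}) to reduce to the case where $\mu_k = w_k\, d\lambda$ is absolutely continuous; the Lebesgue decomposition of the nonnegative measure $\mu_n - c\mu_0$ then shows that $w_n \geq c w_0$ holds $\lambda$-a.e., and norm convergence of $(\mu_n)$ forces $\lim \int |w_0 - w_n|\, d\lambda = 0$ by (\ref{e2.1}).

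For the key condition (\ref{(e5.13)}), set $c' := \min(1,c)$. From $w_n \geq c w_0$ one gets $\widetilde{w}_n = \min(w_0, w_n) \geq c' w_0$, so on the set $\{w_0 > 0\}$ the integrand is bounded:
\begin{equation*}
0 \leq \log\frac{w_0}{\widetilde{w}_n} \leq -\log c',
\end{equation*}
while on $\{w_0 = 0\}$ the integrand equals $0$ by the convention preceding Theorem \ref{t5.5}. So the sequence of integrands is uniformly bounded on $\Gamma$, a finite-measure space.

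The next step is to verify that $\log(w_0/\widetilde{w}_n) \to 0$ in $\lambda$-measure. Since $|\widetilde{w}_n - w_0| \leq |w_n - w_0|$, the $L^1(\lambda)$-convergence $w_n \to w_0$ implies $\widetilde{w}_n \to w_0$ in measure $\lambda$, hence $\widetilde{w}_n/w_0 \to 1$ in measure on $\{w_0 > 0\}$; continuity of $\log$ at $1$ then gives the claim. Combining this convergence in measure with the uniform bound on a finite measure space (this is Lemma \ref{l2.9} with $p=1$, applied to the bounded dominant $-\log c'$), the integrals tend to zero, which is (\ref{(e5.13)}). Invoking Theorem \ref{t5.5} now yields (R1).

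The argument is essentially routine once the right reduction is made; the only genuinely delicate point is ensuring that the convention for $\log(w_0/\widetilde{w}_n)$ on $\{w_0 = 0\}$ is harmless, which it is because $\widetilde{w}_n \geq c' w_0$ guarantees $\widetilde{w}_n = 0$ wherever $w_0 = 0$ (in the sense required by the convention, the integrand is just $0$ there). I expect no deeper obstacle.
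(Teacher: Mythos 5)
Your proof is correct and takes essentially the same route as the paper: both reduce the corollary to Theorem \ref{t5.5} by verifying (\ref{(e5.13)}), using $\mu_n \geq c\mu_0$ to bound the integrand $\log(w_0/\widetilde{w}_n)$ above by $-\log\min(1,c)$ on $\{w_0>0\}$ and the norm convergence of $(\mu_n)$ to pass to the limit. The only differences are in execution --- the paper argues by contradiction, extracting an a.e.-convergent subsequence and applying dominated convergence, whereas you work directly with convergence in measure and bounded convergence --- and one citation is off: Lemma \ref{l2.9} assumes convergence of the integrals as a hypothesis, so it is not the right reference for your final step, but that step (uniformly bounded functions tending to $0$ in $\lambda$-measure on a finite measure space have integrals tending to $0$) is elementary, e.g.\ via $\int |f_n|\,d\lambda \leq \varepsilon\lambda(\Gamma) + (-\log\min(1,c))\,\lambda(|f_n|>\varepsilon)$.
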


\begin{proof}
According to Corollary \ref{c3.3}(i) we can assume that $\log w_0 \in L^1(\lambda)$. If (R1) would not be true, there would exist $m \in\mathbb N$, $p \in (0,\infty)$, and a subsequence $(w_{n'})$ such that 
\begin{equation}   \label{(e5.17)}
    \lim_{n' \to \infty} d_p(\mu_{n'} ; S_m) \not= d_p(\mu_0; S_m) \, .
\end{equation}
For a subsequence $(n'')$ of $(n')$ with $\lim_{n'' \to \infty} w_{n''} = w_0$ $\lambda$-a.e. we get \linebreak[4]
$\lim_{n'' \to \infty} \int \frac{\log w_0}{\log w_{n''}} \, d\lambda=0$ by Lebesgue's dominated convergence theorem. Hence, $\lim_{n'' \to \infty} d_p(\mu_{n''} ; S_m) = d_p(\mu_0 ; S_m )$ by Theorem \ref{t5.5}, which is a contradiction to (\ref{(e5.17)}).
\end{proof}

The preceeding assertion does not remain true if the norm-convergence of $(\mu_n)$ is replaced by the convergence in measure or with weak convergence as can be seen from Example \ref{e5.2} or from the following example, resp..

\begin{example}   \label{e5.8}
Define $w_k$ as in Example \ref{e4.11}. Then 
\begin{eqnarray*}
d_2(\mu_n ; S_1) & = &  \exp \left( \frac{1}{2\pi}\int \log (2+\cos n\gamma) \, d\gamma \right) \\
& = & \exp \left(\frac{1}{2\pi} \int \log 2 \, d\gamma \right) \cdot \exp \left( \frac{1}{2\pi} \int \log \left(1+\frac{\cos n\gamma}{2} \right) \right) \, d\lambda \\
& = & d_2(\mu_0 ; S_1) \cdot \exp \left(  \frac{1}{2\pi} \int \sum_{j=1}^\infty (-1)^j \frac{1}{j} \left(\frac{\cos \gamma}{2} \right)^j \, d\lambda \right) \\
& \geq & d_2(\mu_0 ; S_1)
\end{eqnarray*}
since the summands with odd indices give $0$.
\end{example}

\begin{remark}  \label{r5.9}
Set $G=\Gamma=\mathbb R$, $a \in (0,\infty)$ and $S:=(-\infty,-a]$. In their paper \cite{[21]} Malevich and Mirzakhmedow investigated the stability of the prediction error $d_2(\mu ; S)$. They claim that if $d\mu_k=w_k \, d\lambda$, $\lim \| \mu_0 - \mu_n \|=0$, $\lim \int \log \frac{|w_0-w_n|}{w_n} \, d\lambda =0$, then $\lim d_2(\mu_n ; S)=d_2(\mu_0 ; S)$. Unfortunately, their proof contains gaps. Note, that our Theorem \ref{t5.5} is a similar, but stronger assertion for the case $G=\mathbb Z$, $\Gamma=[0,2\pi)$, $S=S_m$ since for any $b \in (0,1]$, $\log \frac{w_0}{\widetilde{w}_n} = \log \frac{w_0}{\widetilde{w}_n} -\log 1 \leq \frac{1}{b} \frac{|w_0-\widetilde{w}_n|^b}{\widetilde{w}_n^b} \leq \frac{1}{b} \frac{|w_0-w_n|^b}{w_n^b}$. Moreover, our Theorem \ref{t5.11} below states a result on $d_2(\mu_n,\mu_0; S_m)$, whose analogue in the case $G=\Gamma=\mathbb R$ was also claimed in \cite{[21]}.
\end{remark}

If the metric projection of the function $1$ onto the space $[{\mathcal T}(S_m)]_{\mu,p}$ exists, it is denoted by $\phi^{(m)}_{\mu,p}$. From \cite[Thm.~8.1]{[6]} it can be derived that  $\phi^{(m)}_{\mu,p}$ exists for all $m \in \mathbb N$ and $p \in [1,\infty)$. To study the theoretical error of the estimated prediction it would be helpful to have an expression for  $\phi^{(m)}_{\mu,p}$. To the best of our knowledge Theorem \ref{t5.10} summarizes  the information on  $\phi^{(m)}_{\mu,p}$ known at present. In case $p=2$ the assertion is an old result of prediction theory and can be found e.g.~in \cite{[4]}. It was extended to $p \in (1,2]$ by Cambanis and Soltani \cite[Thm.~5.1]{[3]} and, with a different proof, to $p \in (1, \infty)$ by Rajput and Sundberg in \cite[Thms.~2 and 4]{[27]}. Recently, the result was re-discovered, cf.~\cite{[19]}. 

Let $\log w \in L^1(\lambda)$. Recall the definition (\ref{(e5.4)}) of the corresponding Hardy function $h$ and the definition of $\Pi^{(m)}(h)$, cf.~(\ref{(e5.7)}).

\begin{theorem}    \label{t5.10}
   Let $m \in \mathbb N$ and $p \in [1, \infty)$. If $p=1$ or $p=2$ or if 
\begin{equation}    \label{(e5.18)}
   (\Pi^{(m)}(h))(z) \not= 0 \quad {\rm for} \,\, |z|<1  \,\, ,
\end{equation}
then
\begin{equation}    \label{(e5.19)}
    \phi^{(m)}_{\mu,p} = 1 - \left( \frac{\Pi^{(m)}(h)}{h} \right)^\frac{2}{p}
\end{equation}
and
\begin{equation}     \label{(e5.20)}
   d_p(\mu ; S_m) = \int | \Pi^{(m)}(h) |^2 \, d\lambda = \sum_{j=0}^{m-1} |b_j|^2 \,\, .
\end{equation}
\end{theorem}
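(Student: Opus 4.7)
My plan is to exhibit $\phi := 1-(P/h)^{2/p}$ with $P := \Pi^{(m)}(h)$, verify that it lies in $[\mathcal{T}(S_m)]_{\mu,p}$ and achieves the claimed error $\sum_{j=0}^{m-1}|b_j|^{2}$, and then confirm optimality via the variational (Euler--Lagrange) characterization of the best $L^{p}$-approximant. The case hypotheses are there precisely to make $(P/h)^{2/p}$ a well-defined analytic function on the open disk: $h$ is outer with $h(0)>0$ hence nowhere zero in the disk; for $p\in\{1,2\}$ the exponent $2/p$ is a positive integer so no branch is needed, while for other $p$ condition (\ref{(e5.18)}) forbids zeros of $P$ in the disk, so the principal branch of $(P/h)^{2/p}$ is single-valued and analytic.

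The target value of the error is immediate: on the unit circle $|h|^{2}=w$, so $|1-\phi|^{p}=|P/h|^{2}=|P|^{2}/w$ pointwise, and
\[
\int|1-\phi|^{p}\,d\mu=\int|P|^{2}\,d\lambda=\sum_{j=0}^{m-1}|b_{j}|^{2}
\]
by Parseval applied to the polynomial $P=\sum_{j=0}^{m-1}b_{j}e_{j}$. For $\phi\in[\mathcal{T}(S_m)]_{\mu,p}$ I will use the isometry $M\psi:=\psi h^{2/p}$ from $L^{p}(\mu)$ onto $L^{p}(\lambda)$; outerness of $h^{2/p}$ in $H^{p}$ (i.e.~density of polynomial multiples of an outer function, Szeg\H{o}--Beurling) makes $M$ carry $[\mathcal{T}(S_m)]_{\mu,p}$ onto $e_m H^{p}$. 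A direct computation gives $M\phi=h^{2/p}-P^{2/p}$, with both summands in $H^{p}$; using the factorization $h=E\exp g$ with $E:=\exp\Pi^{(m)}(f)$ and $g$ starting at $z^{m}$ from the proof of Lemma~\ref{l5.4}, both $h^{2/p}$ and $E^{2/p}$ share their first $m$ Taylor coefficients. The analogous expansion $P=\Pi^{(m)}(E)=E\bigl(1-(E-P)/E\bigr)$ with $(E-P)/E$ starting at $z^{m}$ yields the same conclusion for $P^{2/p}$ and $E^{2/p}$; hence $h^{2/p}$ and $P^{2/p}$ agree in the first $m$ Taylor coefficients, so $h^{2/p}-P^{2/p}\in e_m H^{p}$, and $\phi$ lies in the approximation space.

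For optimality when $p\in(1,\infty)$ I will check the Euler--Lagrange condition $\int|1-\phi|^{p-2}\overline{(1-\phi)}\,e_{k}\,d\mu=0$ for all $k\geq m$. The principal-branch identity $|z|^{2-2/p}\,e^{-i(2/p)\arg z}=z^{1-2/p}\bar z$, combined with $d\mu=|h|^{2}\,d\lambda$, reduces this to
\[
\int P^{1-2/p}h^{2/p}\bar P\,e_{k}\,d\lambda=0 \qquad (k\geq m).
\]
The analytic factor $P^{1-2/p}h^{2/p}$ has Fourier support in $\{0,1,2,\ldots\}$, while $\bar P\,e_{k}$ has Fourier support in $\{k-(m-1),\ldots,k\}\subseteq\{1,2,\ldots\}$ for $k\geq m$; the two supports are disjoint, so the integral vanishes. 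Together with strict convexity of $L^{p}(\mu)$ for $p>1$ (giving uniqueness of the minimizer), this identifies $\phi$ as $\phi_{\mu,p}^{(m)}$. The case $p=1$ I would instead treat by an $H^{1}$-duality argument, with $\operatorname{sgn}(1-\phi)=P\bar h/(\bar P h)$ serving as the dual extremal function, or by passing to the limit $p\downarrow 1$ and using Lemma~\ref{l5.3} to keep control of the first $m$ Taylor coefficients of $h$.

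The hardest step I anticipate is the Hardy-type regularity of $P^{1-2/p}h^{2/p}$ when $p<2$: the exponent $1-2/p$ is then negative, so zeros of $P$ on the unit circle---which (\ref{(e5.18)}) does not exclude---could make $P^{1-2/p}$ fail to be in $L^{1}(\lambda)$, undermining the Fourier-support argument. I would bypass this via the regularization $P_{\varepsilon}:=P+\varepsilon$, which for small $\varepsilon>0$ is zero-free on the closed disk, running the orthogonality argument with $P_{\varepsilon}$ in place of $P$ and then passing to the limit $\varepsilon\to 0$ using Fatou's lemma.
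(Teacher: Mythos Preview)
The paper does not prove Theorem~\ref{t5.10}; it records it as a known result with references to \cite{[4]} for $p=2$, \cite{[3]} for $p\in(1,2]$, \cite{[27]} for general $p\in(1,\infty)$, and \cite{[19]}. Your proposal therefore supplies an argument rather than competing with one, and it follows essentially the route of Rajput--Sundberg~\cite{[27]}: exhibit the candidate, check membership in $[\mathcal T(S_m)]_{\mu,p}$ via the outer isometry $\psi\mapsto\psi\,h^{2/p}$, and verify optimality by the first-order condition. Your membership argument (matching the first $m$ Taylor coefficients of $h^{2/p}$ and $P^{2/p}$ through $E^{2/p}=\exp\bigl(\tfrac{2}{p}\Pi^{(m)}(f)\bigr)$) and the error computation are correct.

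The optimality step has a real gap. The phrase ``the two supports are disjoint'' is not what you mean (the sets $\{0,1,2,\dots\}$ and $\{k-m+1,\dots,k\}$ overlap); what you actually need is that $F:=P^{1-2/p}h^{2/p}$ lies in $H^1$, so that $\int F e_j\,d\lambda=\widehat F(-j)=0$ for every $j\geq 1$. You never verify $F\in H^1$, and for $1<p<2$ it can fail: if $P$ has a zero of multiplicity $k\geq 2$ on the unit circle (which condition~(\ref{(e5.18)}) does not exclude) and $w$ is bounded nearby, then $|F|$ behaves like $|\theta-\theta_0|^{k(1-2/p)}$, which is not integrable once $k(2/p-1)\geq 1$. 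Your proposed repair $P_\varepsilon=P+\varepsilon$ does not work as stated: adding a constant to a polynomial with a boundary zero can push that zero \emph{into} the open disk, so $P_\varepsilon^{1-2/p}$ need not even be analytic there. A clean fix stays inside your framework: write $\bar P\,e_k=e_{k-m+1}P^*$ with $P^*(z)=z^{m-1}\overline{P(1/\bar z)}$ and note that $P^{1-2/p}h^{2/p}P^*$ is a product of $N^+$ functions whose boundary modulus $|P|^{2-2/p}w^{1/p}$ lies in $L^1$, hence belongs to $H^1$ by Smirnov's theorem; the orthogonality follows. Alternatively, regularize by $P_r(z):=P(rz)$ with $r\uparrow 1$, which genuinely keeps all zeros in $|z|>1$, and pass to the limit by dominated convergence (the bound $|P_r|^{2-2/p}\leq C$ is uniform), not Fatou. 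Your $p=1$ case remains only a sketch; the $H^1$-duality argument you allude to does go through, but it has to be written out.
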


Let $\nu$ be an absolutely continuous measure of $\mathcal M$ such that $\mu << \nu$. Set $w_\nu := \frac{d\nu}{d\lambda}$ and let $h_\nu$ be the correspondung Hardy function (\ref{(e5.4)}) if $\log w_\nu \in L^1(\lambda)$. If $\Pi^{(m)}(h_\nu)$ is root-free in the open unit disc, from Theorem \ref{t5.10} it follows
\begin{equation}   \label{(e5.21)}
   d_p(\nu,\mu;S_m) = \int | \Pi^{(m)}(h_\nu)|^2 \frac{w}{w_\nu} \, d\lambda \,\, , m\in {\mathbb N} \, , p \in [1,\infty) \, .
\end{equation}
In particular, since $\Pi^{(1)}(h_\nu)=h_\nu(0)=\exp (\frac{1}{2} \int \log w_\nu \, d\lambda ) \not= 0$, for $m=1$ condition (\ref{(e5.18)}) is always satisfied and
\begin{equation}    \label{(e5.22)}
   d_p(\nu,\mu; S_1)=\int \frac{w}{w_\nu} \, d\lambda \cdot \exp \left( \int \log w_\nu \, d\lambda \right) , p\in [1,\infty) \, .
\end{equation}
Moreover, in case $m=1$ an assertion analogous to Theorem \ref{t4.9} can be proved.

\begin{theorem}   \label{t5.11}
   Let $p \in [1,\infty)$. The error $d_p(\nu,\mu ; S_1)$ is equal to zero or satisfies the inequality 
\begin{equation}   \label{(e5.23)}
   d_p(\mu; S_1) \leq d_p(\nu,\mu ; S_1) \leq \infty  \, .
\end{equation}
If $d_p(\mu ; S_1) >0$, the first inequality of (\ref{(e5.23)}) becomes an equality if and only if $\nu = a \mu$ for some constant $a \in (0,\infty)$. 
\end{theorem}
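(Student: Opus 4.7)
The plan is to reduce Theorem \ref{t5.11} to a one-line application of Jensen's inequality applied to the convex function $\exp$, using the explicit formula (\ref{(e5.22)}). First I would dispose of two degenerate cases. If $\log w_\nu\notin L^1(\lambda)$, then Szeg\H{o}'s formula (\ref{(e5.2)}) gives $d_p(\nu;S_1)=0$, so the constant function $1$ is the (unique) metric projection $\phi^{(1)}_{\nu,p}$, and hence $d_p(\nu,\mu;S_1)=\int|1-1|^p\,d\mu=0$, placing us in the \emph{equal-to-zero} alternative. If instead $d_p(\mu;S_1)=0$, then (\ref{(e5.23)}) is trivial. So I may assume henceforth that both $\log w$ and $\log w_\nu$ belong to $L^1(\lambda)$, and in particular that $w,w_\nu>0$ $\lambda$-a.e.; then (\ref{(e5.22)}) applies, its condition (\ref{(e5.18)}) being automatic for $m=1$ because $\Pi^{(1)}(h_\nu)=h_\nu(0)>0$.

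Under these assumptions, $\log(w/w_\nu)=\log w-\log w_\nu\in L^1(\lambda)$. Since $\lambda$ is a probability measure ($\lambda(\Gamma)=1$) and $\exp$ is convex, Jensen's inequality gives
\[
    \exp\!\int\log\frac{w}{w_\nu}\,d\lambda \;\le\; \int\frac{w}{w_\nu}\,d\lambda,
\]
with the right-hand side possibly infinite. Multiplying both sides by $\exp\!\int\log w_\nu\,d\lambda$ and combining with (\ref{(e5.2)}) for $d_p(\mu;S_1)$ and with (\ref{(e5.22)}) for $d_p(\nu,\mu;S_1)$ yields the desired inequality $d_p(\mu;S_1)\le d_p(\nu,\mu;S_1)\le\infty$.

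For the equality case, assume $d_p(\mu;S_1)>0$. Since $\exp$ is strictly convex, equality in the Jensen step above holds if and only if $\log(w/w_\nu)$ is $\lambda$-a.e.\ constant, equivalently $w/w_\nu=c$ $\lambda$-a.e.\ for some $c\in(0,\infty)$, i.e.\ $\nu=a\mu$ with $a=1/c$. Conversely, substituting $w_\nu=aw$ into (\ref{(e5.22)}) and using $\lambda(\Gamma)=1$ gives
\[
   d_p(\nu,\mu;S_1)=\frac{1}{a}\cdot a\cdot \exp\!\int\log w\,d\lambda=d_p(\mu;S_1).
\]
I do not foresee any genuine obstacle: the argument is essentially a one-line application of Jensen, and the only step that demands care is the case-analysis distinguishing when $\log w$ or $\log w_\nu$ fails to be integrable, which must be handled before invoking (\ref{(e5.22)}).
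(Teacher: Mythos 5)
Your proof is correct and follows essentially the same route as the paper: both reduce the left inequality of (\ref{(e5.23)}) via (\ref{(e5.2)}) and (\ref{(e5.22)}) to the Jensen inequality $\exp\int\log\frac{w}{w_\nu}\,d\lambda\le\int\frac{w}{w_\nu}\,d\lambda$ and read off the equality case from strict convexity of $\exp$. Your explicit handling of the degenerate cases ($\log w_\nu\notin L^1(\lambda)$ or $d_p(\mu;S_1)=0$) is a slightly more careful write-up of what the paper leaves implicit.
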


\begin{proof}
To prove the first assertion we have only to show that if both $\log w$ and $\log w_\nu$ are integrable with respect to $\lambda$, then the left inequality of (\ref{(e5.23)}) is satisfied. According to (\ref{(e5.2)}) and (\ref{(e5.22)}) one has to derive the inequality $\exp \int \log w\, d\lambda \leq \int \frac{w}{w_\nu} \, d\lambda\cdot \exp \int \log w_\nu \, d\lambda$ or, equivalently, $\exp \int \log \frac{w}{w_\nu} \, d\lambda \leq \int \frac{w}{w_\nu} \, d\lambda$, which is Jensen's inequality. The second assertion merely expresses the condition for equality in Jensen's inequality.
\end{proof}

We slightly extend Theorem \ref{t5.10} showing that condition (\ref{(e5.18)}) is not a necessary assumption for the equality (\ref{(e5.20)}).

\begin{corollary}    \label{c5.12}
  For all $m \in \mathbb N$ and $p \in [1,\infty)$, equality (\ref{(e5.20)}) is satisfied.
\end{corollary}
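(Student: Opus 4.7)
The strategy is to reduce the statement, via Lemma \ref{l5.4}, to a purely algebraic statement about the polynomial $P := \Pi^{(m)}(h)$. By Lemma \ref{l5.4} combined with Lemma \ref{l5.3}, the value $d_p(\mu;S_m)$ depends only on $P$; the right-hand side $\sum_{j=0}^{m-1}|b_j|^2 = \|P\|_{L^2(\lambda)}^2$ manifestly also depends only on $P$. I would therefore replace $\mu$ by the measure $\mu^{\sharp}$ with density $|g|^2$, where $g := \exp\Pi^{(m)}(f)$ is entire and nonvanishing; this measure satisfies $\Pi^{(m)}(h_{\mu^{\sharp}}) = P$, and the computation depends only on $P$. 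If $P$ has no zero in $|z|<1$ then condition (\ref{(e5.18)}) holds and Theorem \ref{t5.10} applies to $\mu^{\sharp}$ directly, so there is nothing further to prove.

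For the remaining case I would factor $P = B \cdot Q$, where $B$ is the finite Blaschke product whose zeros are the zeros of $P$ in $|z|<1$ (counted with multiplicity) and $Q$ is the polynomial of degree $\le m-1$ obtained from $P$ by reflecting those zeros to their Schwarz inverses; then $Q$ has no zero in $|z|<1$, and $|Q|=|P|$ on the unit circle, so $\|Q\|_{L^2(\lambda)}^2 = \|P\|_{L^2(\lambda)}^2$. Choosing any measure $\mu^{\flat}$ with $\Pi^{(m)}(h_{\mu^{\flat}}) = Q$ (for instance, the one with density $|g_Q|^2$, where $g_Q = \exp\Pi^{(m)}(f_Q)$ is defined from $Q$ via Lemma \ref{l5.3}), Theorem \ref{t5.10} applies to $\mu^{\flat}$ and yields $d_p(\mu^{\flat};S_m) = \|Q\|_2^2 = \|P\|_2^2$. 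The remaining task is to establish $d_p(\mu^{\sharp};S_m) = d_p(\mu^{\flat};S_m)$, i.e. that the dependence of the distance on $P$ is invariant under the Blaschke transfer $P \leftrightarrow Q$.

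I would approach this via a continuous deformation $t \mapsto P_t$ with $P_0 = Q$ and $P_1 = P$, setting $\mu_t$ equal to the measure with density $|g_{P_t}|^2$. Since these densities are uniformly bounded above and below on the unit circle, Corollary \ref{c5.7} (combined with the inf expression of Lemma \ref{l5.4}) makes $t \mapsto d_p(\mu_t;S_m)$ continuous, while $t \mapsto \|P_t\|_2^2$ is continuous in $t$ as well. For $t$ in the open set where $P_t$ is root-free in $|z|<1$, Theorem \ref{t5.10} gives $d_p(\mu_t;S_m) = \|P_t\|_2^2$, and we would extend this identity to $t = 1$ by continuity. The main obstacle is exactly that this root-free region need not be dense in $[0,1]$: along a naive interpolation $P_t = Q\cdot((1-t)+tB)$ the zeros of $P_t$ typically enter $|z|<1$ at some critical $t^\ast < 1$ and remain inside, so the identity must be propagated across these critical parameters. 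Overcoming this requires either a more cleverly chosen deformation keeping the roots of $P_t$ in the closed exterior of the disc up to $t = 1^-$, or a direct argument using the inner-outer factorisation in $H^p$ together with the identity $|B|=1$ on $|z|=1$ to verify that $d_p(\mu_t;S_m) - \|P_t\|_2^2$ survives the passage of a zero through the unit circle. I expect this last step to be the principal technical difficulty of the proof.
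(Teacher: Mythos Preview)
Your opening reduction via Lemmata~\ref{l5.3} and~\ref{l5.4} matches the paper exactly: $d_p(\mu;S_m)$ depends only on $P:=\Pi^{(m)}(h)$, and so does the target $\sum_{j<m}|b_j|^2=\|P\|_{L^2(\lambda)}^2$. Your reflected polynomial $Q$ is precisely what the paper calls $o_m$, the outer part of $P$; it is again a polynomial of degree at most $m-1$, root-free in $|z|<1$, with $|o_m|=|P|$ on the unit circle. From this point the paper does \emph{not} deform. It passes in one step to the measure $|o_m|^2\,d\lambda$ (which equals $|P|^2\,d\lambda$ since $|o_m|=|P|$ on the boundary), observes that the outer function of this measure is $o_m$ itself, so that condition~(\ref{(e5.18)}) holds and Theorem~\ref{t5.10} applies, giving $d_p(|o_m|^2\,d\lambda;S_m)=\sum_{j<m}|c_j|^2$. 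Parseval's relation then yields $\sum_{j<m}|c_j|^2=\int|o_m|^2\,d\lambda=\int|P|^2\,d\lambda=\sum_{j<m}|b_j|^2$. The link back to $d_p(\mu;S_m)$ is asserted in a single line.

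Your continuous one-parameter deformation $P_t$ is therefore a detour, and the obstacle you flag---zeros of $P_t$ entering the open disc and staying there---is an artefact of that route rather than of the problem itself. Your proposal is incomplete exactly where you say it is, and the paper's argument shows that the inner--outer factorisation of the polynomial $P$ lets one jump directly to a measure to which Theorem~\ref{t5.10} applies, with no limiting process required. That said, your instinct that the passage from $P$ to $o_m$ is the crux is not misplaced: the paper's justification for $d_p(\mu;S_m)=d_p(|\Pi^{(m)}(h)|^2\,d\lambda;S_m)$ is rather compressed, since the outer function of the right-hand measure is $o_m$, whose first $m$ Taylor coefficients are $c_0,\dots,c_{m-1}$ and not $b_0,\dots,b_{m-1}$, so the phrase ``uniquely defined by the first $m$ Taylor coefficients of $h$'' does not by itself bridge the two sides. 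The paper's route is nonetheless the one to pursue; a deformation is not the missing ingredient.
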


\begin{proof} 
Let $o_m(z):= \sum_{j=0}^\infty c_j z^j$, $z \in \mathbb C$, be the outer part of $\Pi^{(m)}(h)$. Since $|\Pi^{(m)}(h) |=|o_m|$ on the unit circle and since Lemmata \ref{l5.3} and \ref{l5.4}  imply that $d_p(\mu ; S_m)$ is uniquely defined by the first $m$ Taylor coefficients of $h$, it follows $d_p(\mu ; S_m)= d_p(| \Pi^{(m)}(h)|^2 d\lambda ;S_m) = d_p(|o_m|^2 \, d\lambda ; S_m)$. Now, $d_p(|o_m|^2 \, d\lambda ; S_m) =\sum_{j=0}^{m-1} |c_j|^2 = \int | o_m |^2 \, d\lambda = \int |\Pi^{(m)}(h)|^2 \, d\lambda = \sum_ {j=0}^{m-1} | b_j |^2$ by Theorem \ref{t5.5} and Parseval's relation.
\end{proof}

We are now in the position to give sufficient conditions for (R2). If $d\mu_k =w_k d\lambda$ and $\log w_k \in L^1(\lambda)$, denote the corresponding Hardy function by $h_k$, cf.~(\ref{(e5.4)}). 

\begin{theorem}   \label{t5.13}
   Let $m \in \mathbb N$ and $p \in [1,\infty)$. Suppose, the absolutely continuous measures $\mu_n$, $n \in {\mathbb N}_0$, satisfy the following conditions:
\begin{itemize}
\item[(i)] $\rm w^*$-$\lim \mu_n = \mu_0$,
\item[(ii)] $\mu_0 << \mu_n$ and $\lim\inf d_p(\mu_n,\mu_0 ; S_m) \geq d_p(\mu_0 ; S_m)$,
\item[(iii)] $\log w_0 \in L^1(\lambda)$,
\item[(iv)] if $p \not= 1$ and $p \not=2$, then for all sufficiently large $n$, $\Pi^{(m)}(h_n)$ does not have zeros in the open unit disc,
\item[(v)] $\lim \int \frac{|w_0-w_n|}{w_n} \, d\lambda = 0$.
\end{itemize}
Then (R2) is true for $S=S_m$.
\end{theorem}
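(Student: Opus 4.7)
The plan is to reduce (R2) to the explicit formula (\ref{(e5.21)}),
\[ d_p(\mu_n,\mu_0;S_m) = \int |\Pi^{(m)}(h_n)|^2\,\frac{w_0}{w_n}\,d\lambda, \]
and split this integral into a dominant piece that Theorem \ref{t3.1} controls plus a remainder that vanishes by (v). Hypothesis (ii) already supplies the $\liminf$ inequality, so only the matching $\limsup$ bound needs to be established.

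A preliminary step is to verify $\log w_n \in L^1(\lambda)$, so that $h_n$ is defined. Condition (iii) forces $w_0>0$ $\lambda$-a.e., and $\mu_0 \ll \mu_n$ then yields $w_n>0$ $\lambda$-a.e. Since $(\log w_n)^+ \le w_n$ and $\|\mu_n\| \to \|\mu_0\|$ by (i), the positive part is uniformly integrable. The negative part is handled via (v): the estimate $(\log(w_0/w_n))^- \le (\log w_n)^+ + (\log w_0)^- \le w_n + (\log w_0)^-$ combined with $(\log(w_0/w_n))^+ \le w_0/w_n$ (which is in $L^1(\lambda)$ by (v)) shows that $\log(w_0/w_n) \in L^1(\lambda)$, and hence $\log w_n = \log w_0 - \log(w_0/w_n) \in L^1(\lambda)$ by (iii). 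Hypothesis (iv) (automatic for $p\in\{1,2\}$ by Theorem \ref{t5.10}) then licenses (\ref{(e5.21)}) for all sufficiently large $n$.

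With this in hand, Corollary \ref{c5.12} allows me to write
\[ d_p(\mu_n,\mu_0;S_m) = d_p(\mu_n;S_m) + R_n, \qquad R_n := \int |\Pi^{(m)}(h_n)|^2\Bigl(\tfrac{w_0}{w_n}-1\Bigr) d\lambda, \]
and Theorem \ref{t3.1} together with (i) delivers $\limsup d_p(\mu_n;S_m) \le d_p(\mu_0;S_m)$. The key remaining ingredient is a uniform $L^\infty$ bound on $|\Pi^{(m)}(h_n)|$ on the unit circle: writing $\Pi^{(m)}(h_n) = \sum_{j=0}^{m-1} b_j^{(n)} e_j$ and applying Cauchy--Schwarz,
\[ |\Pi^{(m)}(h_n)|^2 \le m\sum_{j=0}^{m-1}|b_j^{(n)}|^2 \le m\|h_n\|_{H^2}^2 = m\|\mu_n\|, \]
which is bounded uniformly in $n$ by (i). Therefore $|R_n| \le mM \int |w_0-w_n|/w_n\,d\lambda \to 0$ by (v), yielding the desired $\limsup$ bound.

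The only step with genuine content is the uniform control of $|\Pi^{(m)}(h_n)|$ on the unit circle, which is precisely what allows (v) to collapse $R_n$ to zero. Everything else is routine bookkeeping: verifying that (\ref{(e5.21)}) applies (through (iii), (iv), and the integrability of $\log w_n$), isolating $d_p(\mu_n;S_m)$ via Corollary \ref{c5.12}, and feeding that term into Theorem \ref{t3.1}.
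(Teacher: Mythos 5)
Your proof is correct, and the overall skeleton coincides with the paper's: both split $d_p(\mu_n,\mu_0;S_m)$ into $d_p(\mu_n;S_m)$ plus a remainder of size $O\big(\int |w_0-w_n|/w_n \, d\lambda\big)$ (the paper writes this as $\int |1-\phi^{(m)}_{n,p}|^p w_0\,d\lambda \le \int |1-\phi^{(m)}_{n,p}|^p |w_0-w_n|\,d\lambda + d_p(\mu_n;S_m)$, which is literally your decomposition after substituting (\ref{(e5.19)})), and both finish with hypothesis (ii) for the matching lower bound. Where you genuinely diverge is in the two supporting estimates. For the uniform bound $|\Pi^{(m)}(h_n)|^2 \le c$ on the circle, the paper routes through Remark \ref{r5.9} and Lemma \ref{l5.6} to get $\lim\int|\log w_0-\log w_n|\,d\lambda=0$, hence convergence of the Taylor coefficients of $h_n$ to those of $h_0$; you instead use the elementary Parseval bound $|\Pi^{(m)}(h_n)|^2 \le m\sum_{j<m}|b_j^{(n)}|^2 \le m\|\mu_n\|$, which only needs the boundedness of $\|\mu_n\|$ coming from (i). For the main term, the paper invokes Theorem \ref{t5.5} (which again rests on (\ref{(e5.13)}) and Lemma \ref{l5.6}), whereas you only need the one-sided bound from Theorem \ref{t3.1}. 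Your route is lighter: it bypasses Lemma \ref{l5.6} except for the (correctly supplied) verification that $\log w_n\in L^1(\lambda)$ so that $h_n$ and formula (\ref{(e5.21)}) make sense, and it isolates exactly which hypotheses each estimate consumes ((i) for the uniform bound and the main term, (v) only for the remainder). The paper's route, by contrast, reuses machinery it has already built and yields the slightly stronger by-product that the individual Taylor coefficients converge, which is not needed here.
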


\begin{proof}
Since relation (\ref{(e5.13)}) is a consequence of condition (v), cf.~Remark \ref{r5.9}, conditions (iii) and (v) yield (\ref{(e5.15)}), which implies that the $j$-th Taylor coefficient of $h_n$ tends to the $j$-th Taylor coefficient of $h_0$ for $n \to \infty$, $j \in {\mathbb N}_0$. Therefore, from (\ref{(e5.19)}) we obtain $|1-\phi^{(m)}_{n,p} |^p \leq \frac{c}{|h_n |^2}$ $\, \lambda$-a.e. for some positive constant $c$ and all $n$ large enough. We derive
\begin{eqnarray*}
    d_p(\mu_n, \mu_0 ; S_m) & = &  \int | 1-\phi^{(m)}_{n,p} |^p w_0 \, d\lambda \\
                                           & \leq & \int  | 1-\phi^{(m)}_{n,p} |^p | w_0-w_n | \, d\lambda + \int | 1-\phi^{(m)}_{n,p} |^p w_n \, d\lambda \\
                                           & \leq & c \cdot \int \frac{|w_0-w_n |}{w_n} \, d\lambda + d_p(\mu_n ; S_m)   , ,
\end{eqnarray*}
hence, $\lim\sup d_p(\mu_n,\mu_0;S_m) \leq d_p(\mu_0 ; S_m)$ by condition (v) and Theorem \ref{t5.5}. To complete the proof take into account condition (ii).
\end{proof}

As the following example reveals, condition (v) of the preceding theorem cannot be replaced by the weaker condition that $\lim \int \frac{|w_0-w_n|^a}{w_n^a} \, d\lambda = 0$ for some $a \in (0,1)$. The more, it cannot be replaced by condition (\ref{(e5.13)}).

\begin{example}  \label{e5.14}
For $a \in (0,1)$ select $b \in (1,\infty)$ such that $ab<1$, and set $w_0=1$, $w_n := n^{-b} \cdot 1_{[0,\frac{1}{n})} + w_0 \cdot 1_{[\frac{1}{n},2\pi)}$. Then $\lim \int |w_0-w_n | \, d\lambda =0$, $\int \frac{|w_0-w_n |^a}{w_n^a} \, d\lambda= \lim \frac{(n^b-1)^a}{n} =0$, $d_2(\mu_0 ; S_1) =1$, and by (\ref{(e5.22)}) 
\begin{eqnarray*}
d_2(\mu_n,\mu_0 ; S_1) & = & \int \frac{w_0}{w_n} \, d\lambda \cdot \exp \left( \int \log w_n \, d\lambda \right) \\
                                     & = & \frac{1}{2\pi} \left( n^{b-1} + 2\pi -\frac{1}{n} \right) \cdot \exp \left( -\frac{b \log n}{n} \right) \, ,
\end{eqnarray*}
 which tends to $\infty$ if $n$ tends to $\infty$.
\end{example}

From Theorem \ref{t5.10} a function theoretic inequality can be derived, which does not seem to be proven so easy with purely function theoretic means.

\begin{proposition}   \label{p5.15}
   Let $h,g \in H^2$ and $g$ be an outer function. Then for all $m\in \mathbb N$,
\begin{equation}   \label{(e5.24)}
    \int \left| \Pi^{(m)} (g) \right|^2 \frac{|h|^2}{|g|^2} \, d\lambda \geq \sum_{j=0}^{m-1} |b_j |^2 \, ,
\end{equation}
where $(b_j)$ is defined by (\ref{(e5.6)}).
\end{proposition}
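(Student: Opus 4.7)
The plan is to rewrite the left-hand side of (\ref{(e5.24)}) as $\|F\|_{L^2(\lambda)}^2$, where
\[
F \;:=\; \frac{\Pi^{(m)}(g)}{g}\,h,
\]
and then to show that the first $m$ Fourier coefficients of $F$ coincide with those of $h$, i.e.\ $\widehat F(j)=b_j$ for $0\le j<m$. Bessel's inequality would then yield
\[
\|F\|_{L^2(\lambda)}^2 \;\ge\; \sum_{j=0}^{m-1}|\widehat F(j)|^2 \;=\; \sum_{j=0}^{m-1}|b_j|^2,
\]
which is (\ref{(e5.24)}). One may of course assume that the left-hand side of (\ref{(e5.24)}) is finite, so that $F\in L^2(\lambda)$; otherwise nothing is to prove.

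To identify these Fourier coefficients I set $u:=h-F$ and note that it suffices to show $u\in e_mH^2$, since then $\widehat u(j)=0$ for $0\le j<m$ and hence $\widehat F(j)=\widehat h(j)-\widehat u(j)=b_j$ on that range. By construction
\[
u \;=\; \frac{g-\Pi^{(m)}(g)}{g}\,h,
\]
and since $g-\Pi^{(m)}(g)=\sum_{j\ge m}c_je_j=e_m\widetilde g$ with $\widetilde g\in H^2$, this rewrites as $u=e_m\cdot(h\widetilde g/g)$. Thus the task reduces to verifying $h\widetilde g/g\in H^2$.

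The crux is a Smirnov-class argument. On the open unit disc, $h\widetilde g/g$ is analytic (since $g$ is outer, hence zero-free on the disc) and lies in the Smirnov class $N^+$, because $h,\widetilde g\in H^2\subset N^+$, $1/g\in N^+$ for an outer $g$, and $N^+$ is closed under products. The corresponding boundary function agrees a.e.\ on $\Gamma$ with $u/e_m$, which our finiteness reduction places in $L^2(\lambda)$. Smirnov's theorem $N^+\cap L^2(\lambda)=H^2$ then forces $h\widetilde g/g\in H^2$, so that $u\in e_mH^2$, as required.

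The step I expect to be the main obstacle is precisely this Smirnov verification: one must move consistently between the disc-picture of the ratio $h\widetilde g/g$ and its boundary-value picture, and invoke the standard fact that membership in $N^+$ is preserved by quotients with outer denominator. Both are classical and are contained in the Hardy-space references already used by the authors, such as \cite{[6]} and \cite{[10]}. Once $u\in e_mH^2$ is in hand, Bessel applied to $F=h-u$ closes the proof with no further work.
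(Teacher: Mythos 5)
Your proof is correct, and it takes a genuinely different route from the paper's. You argue directly in function-theoretic terms: the left-hand side of (\ref{(e5.24)}) is $\|\Pi^{(m)}(g)h/g\|_{L^2(\lambda)}^2$, and since $g-\Pi^{(m)}(g)=e_m\widetilde g$ with $\widetilde g\in H^2$ and $1/g\in N^+$ for outer $g$, Smirnov's theorem $N^+\cap L^2(\lambda)=H^2$ shows that $h-\Pi^{(m)}(g)h/g\in e_mH^2$, so the first $m$ Fourier coefficients of $\Pi^{(m)}(g)h/g$ are exactly $b_0,\dots,b_{m-1}$ and Bessel's inequality finishes the argument. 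The paper instead proceeds through its prediction-theoretic machinery: it truncates, setting $w_n:=\min(|h|^2,n|g|^2)$, so that $\mu_n\le n\nu$ and the identities (\ref{(e5.20)}) and (\ref{(e5.21)}) from Theorem \ref{t5.10} give $\int|\Pi^{(m)}(g)|^2\,|h_n|^2|g|^{-2}\,d\lambda=d_2(\nu,\mu_n;S_m)\ge d_2(\mu_n;S_m)=\sum_{j=0}^{m-1}|b_{j,n}|^2$, then passes to the limit by monotone convergence and a continuity argument for the Taylor coefficients, with a separate reduction to the outer part when $h$ is not outer. Your argument buys several things: it bypasses Theorem \ref{t5.10} and the entire limiting procedure, it needs only the Smirnov maximum principle rather than the formula for the metric projection, and it handles non-outer $h$ with no extra step (only $h\in H^2\subset N^+$ is used). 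It is also worth noting that your proof somewhat undercuts the sentence the authors place just before the proposition, namely that the inequality ``does not seem to be proven so easy with purely function theoretic means''; what the paper's longer route buys in exchange is an illustration of how the prediction-theoretic identities (\ref{(e5.20)})--(\ref{(e5.21)}) can be leveraged, which is in the spirit of the rest of the section.
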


\begin{proof}
Let $w=|h|^2$, $v=|g|^2$, $d\mu = w \, d\lambda$, $d\nu =v \, d\lambda$, $w_n:= \min (w, nv)$, $d\mu_n = w_n \, d\lambda$. First, assume that $h$ is an outer function. Since 
\begin{eqnarray*}
    \int |\log w_n | \, d\lambda & = & \int_{w \leq nv} | \log w | \, d\lambda + \int_{nv < w} | \log nv | \, d\lambda \\
                                         & \leq & \int | \log w | \, d\lambda + \log n + \int | \log v |\, d\lambda \\
                                         & < & \infty,
\end{eqnarray*}
there exists an outer function $h_n \in H^2$ such that $w_n=|h_n|^2$ $\, \lambda$-a.e. on the unit circle. Set $b_{j,n} := \frac{h_n^{(j)} (0)}{j!}$. Since $w < \infty$ and $v > 0$ $\, \lambda$-a.e., the sequence $(1_{nv < w} (\log w - \log w_n)$ is decreasing and tends to zero for $n\to \infty$. Thus, $\lim \int | \log w - \log w_n | \, d\lambda = \lim \int 1_{nv<w} (\log w - \log w_n) \, d\lambda =0$ by the monotone convergence theorem, which yields 
\begin{equation}     \label{(e5.25)}
   \lim  b_{j,n} = b_j, j \in {\mathbb N}_0 \, .
\end{equation}
The inequality $\mu_n \leq n \nu$ implies $\phi_{\nu,2}^{(m)} \in [{\mathcal T}(S_m)]_{\mu_n,2}$, hence, $\int |\Pi^{(m)}(g)|^2 \frac{|h_n|^2}{|g|^2} \, d\lambda = d_2(\nu,\mu_n ; S_m) \geq d_2(\mu_n ; S_m) = \sum_{j=0}^{m-1} |b_{j,n}|^2$ by (\ref{(e5.21)}) and (\ref{(e5.20)}). To obtain (\ref{(e5.24)}), let $n$ tend to $\infty$ and apply the monotone convergence theorem and (\ref{(e5.25)}). 

If $h$ is not outer and $\widetilde{h}$ denotes its outer part, then $|h|=|\widetilde{h}|$ $\, \lambda$-a.e. on the unit circle, $\sum_{j=1}^{m-1} \left| \frac{\widetilde{h}^{(j)}(0)}{j!} \right|^2 \geq \sum_{j=0}^{m-1}| b_j |^2$ and (\ref{(e5.24)}) follows from the result just proved.
\end{proof}

If $h=1$ and $g(z)=z$ or $g(z)=\exp \left(- \frac{1+z}{1-z} \right)$, $|z|<1$, then $b_0=1$ and $\int | \Pi^{(1)}(g)|^2 \frac{|h|^2}{|g|^2} \, d\lambda$ equals to $0$ or to ${\rm e}^{-2}$, respectively. Consequently, the outerness of $g$ is a necessary supposition in Proposition \ref{p5.15}.

We conclude this section with an extension of Theorem \ref{t5.11} from $m=1$ to arbitrary $m > 0$, what  is an immediate consequence of (\ref{(e5.20)}), (\ref{(e5.21)}) and (\ref{(e5.24)}).  

\begin{corollary}   \label{c5.16}
   Let $m \in\mathbb N$ and $p \in [1, \infty)$. Let $\mu, \nu \in \mathcal M$ be measures of the form $d\mu = |h|^2 \, d\lambda$, $d\nu = |g|^2 \, d\lambda$ for $h,g \in H^2$, $g$ outer. If $d_p(\nu,\mu ; S_m) = \int | \Pi^{(m)}(g) |^2 \frac{|h|^2}{|g|^2} \, d\lambda$, then $d_p(\mu ; S_m) \leq d_p(\nu, \mu ; S_m)$. 
\end{corollary}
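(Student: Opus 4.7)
The plan is to combine Corollary \ref{c5.12} with inequality (\ref{(e5.24)}) of Proposition \ref{p5.15}, using the hypothesis to identify the integral on the right of (\ref{(e5.24)}) with $d_p(\nu,\mu;S_m)$.

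First dispose of the degenerate case: if $\log|h|^2\notin L^1(\lambda)$, Szeg\"o's formula (\ref{(e5.2)}) gives $d_p(\mu;S_m)=0$ and the inequality is immediate. So assume $\log|h|^2\in L^1(\lambda)$ and let $\tilde h\in H^2$ denote the outer part of $h$ in its inner-outer factorization, so that $|\tilde h|=|h|$ $\lambda$-a.e.\ on the unit circle and $\tilde h$ agrees, up to a unimodular constant, with the outer Hardy function (\ref{(e5.4)}) attached to the density $w=|h|^2$. Corollary \ref{c5.12} then yields
$$d_p(\mu;S_m)\ =\ \sum_{j=0}^{m-1}\left|\frac{\tilde h^{(j)}(0)}{j!}\right|^2.$$

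Next, apply Proposition \ref{p5.15} to the pair $(\tilde h, g)$; this is legitimate because $\tilde h\in H^2$ and $g$ is outer by hypothesis. Inequality (\ref{(e5.24)}) gives
$$\int|\Pi^{(m)}(g)|^2\,\frac{|\tilde h|^2}{|g|^2}\,d\lambda\ \geq\ \sum_{j=0}^{m-1}\left|\frac{\tilde h^{(j)}(0)}{j!}\right|^2.$$
Since $|\tilde h|=|h|$ $\lambda$-a.e., the left-hand side coincides with $\int|\Pi^{(m)}(g)|^2|h|^2/|g|^2\,d\lambda$, which by the hypothesis of the corollary equals $d_p(\nu,\mu;S_m)$. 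Combining with the previous display gives $d_p(\mu;S_m)\leq d_p(\nu,\mu;S_m)$, as required.

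There is no genuine obstacle here; the one point requiring attention is that Proposition \ref{p5.15} must be invoked with the outer part $\tilde h$ rather than with $h$ itself, since Corollary \ref{c5.12}'s expression for $d_p(\mu;S_m)$ is stated in the Taylor coefficients of the outer Hardy function of $w=|h|^2$, whereas a direct application with $h$ would only produce a bound in the (possibly strictly smaller) sum $\sum|h^{(j)}(0)/j!|^2$. Replacing $h$ by $\tilde h$ is harmless because $|\tilde h|^2=|h|^2$ on the circle preserves both the integral and the associated spectral density.
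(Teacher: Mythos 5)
Your proof is correct and follows essentially the same route as the paper, which simply declares the corollary ``an immediate consequence of (\ref{(e5.20)}), (\ref{(e5.21)}) and (\ref{(e5.24)})''; you have filled in exactly the intended chain, identifying the hypothesis with the left side of (\ref{(e5.24)}) and the right side with $d_p(\mu;S_m)$ via Corollary \ref{c5.12}. Your remark that (\ref{(e5.24)}) must be invoked for the outer part $\tilde h$ (so that the lower bound is $\sum_{j=0}^{m-1}|\tilde h^{(j)}(0)/j!|^2=d_p(\mu;S_m)$ rather than the possibly smaller sum over the Taylor coefficients of $h$ itself) is a correct and worthwhile clarification of a point the paper glosses over, and it matches the last paragraph of the paper's proof of Proposition \ref{p5.15}.
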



\section{Finite set of observations}

In this section $G$ is an arbitrary LCA group and $S_k:= \{ x_1, ... ,x_k \}$, $k\in \mathbb N$, is a finite subset of $G \setminus \{ 0 \}$. Write ${\mathcal T}:= {\mathcal T}(S_k)$. For $\mu \in \mathcal M$ and $p \in (0,\infty)$, the space $[{\mathcal T}]_{\mu,p} = \mathcal T$ is a finite-dimensional linear space not depending on $p$. However, note that its dimension can be less than $k$ since $\mu$-equivalent functions are identified. To emphasize that we are concerned with $\mu$-equivalence classes of functions we use the notation $[{\mathcal T}]_\mu$ instead of $\mathcal T$. 

Let $\nu \in \mathcal M$ satisfy $\mu << \nu$ and let $p \in (0,\infty)$ be such that the metric projection $\phi_{\nu,p}$ exists. Since $\phi_{\nu,p} \in[{\mathcal T}]_\mu$ we have
\begin{equation}   \label{(e6.1)}
   d_p(\mu) = \inf \{ \int|1-\tau |^p \, d\mu : \tau \in [{\mathcal T}]_\mu \} \leq \int | 1-\phi_{\nu,p} |^p \, d\mu = d_p(\nu,\mu) \, .
\end{equation}
We recall some facts on subadditive functionals  which are absolutely homogeneous of order $p$. Let $L$ be a finite-dimensional linear space over $\mathbb C$ equipped with a norm $\| \cdot \|_L$. A functional $f: L\to [0,\infty)$ is called subadditive if $f(u+v) \leq f(u)+f(v)$, $u,v \in L$. It is absolutely homogeneous of order $p$ if $f(au)=|a|^p f(u)$, $a \in \mathbb C$, $u \in L$. Denote the set of all subadditive and absolutely homogeneous of order $p$ functionals on $L$ by ${\mathcal F}_L$. Note, that subadditivity yields 
\begin{equation}   \label{(e6.2)}
   | f(u)-f(v) | \leq f(u-v) \, .
\end{equation}
Moreover, if $f \in {\mathcal F}_L$ then $f$ is continuous and there exists a constant $C$ with
\begin{equation}  \label{(e6.3)}
   f(u) \leq C \cdot \| u \|_L \,\, , u\in L\, .
\end{equation}
If additionally $f(u)=0$ only for $u=0$, then $\rho (u,v):= f(u-v)$ defines a metric on $L$, and there exists a constant $c \in (0,\infty)$ with 
\begin{equation}   \label{(e6.4)}
   c \cdot \|u\|_L \leq f(u) \,\, , u \in L \, .
\end{equation}
From (\ref{(e6.3)}) and (\ref{(e6.4)}) we derive that the topology generated by the metric $\rho$ is equivalent to the norm topology on $L$ and that the set $K:= \{ u \in L: f(u)=1 \}$ is compact. 

Let $(f_n)$ be a sequence of functionals of ${\mathcal F}_L$. If $f(u):= \sup \{ f_n(u) : n \in {\mathbb N} \} < \infty$ for all $u\in L$, then $f \in {\mathcal F}_L$. If $f(u):= \lim f_n(u)$ exists for any $u \in L$, then $f \in {\mathcal F}_L$.

\begin{lemma}   \label{l6.1}
   If the sequence $(f_n)$ converges to $f_0 \in {\mathcal F}_L$ pointwise and $f_0(u) > 0$ for all $u \in L \setminus  \{ 0 \}$, then the convergence is uniform on the set $K_0 := \{u \in L: f_0(u)=1 \}$.
\end{lemma}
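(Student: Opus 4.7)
The plan is to reduce uniform convergence on $K_0$ to two ingredients: compactness of $K_0$, and a \emph{uniform} upper bound on the family $(f_n)$, from which equicontinuity follows via (\ref{(e6.2)}). First I would note that $K_0$ is compact: since $f_0(u)=0$ only for $u=0$, applying the discussion preceding the lemma to $f=f_0$ shows that $\rho_0(u,v):=f_0(u-v)$ induces a topology equivalent to the norm topology, so $K_0$ is the unit sphere of the metric $\rho_0$ and hence compact in $L$.

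The main obstacle is obtaining a constant $C$ with $f_n(u)\le C\|u\|_L^p$ for every $n\in\mathbb N_0$ and $u\in L$. For this I would fix a basis $e_1,\dots,e_d$ of $L$; pointwise convergence on the basis yields $M_i:=\sup_n f_n(e_i)<\infty$ for each $i$. Then subadditivity together with absolute homogeneity of order $p$ gives, for $u=\sum a_ie_i$,
\[
   f_n(u)\le\sum_{i=1}^d f_n(a_ie_i)=\sum_{i=1}^d|a_i|^p f_n(e_i)\le M\sum_{i=1}^d|a_i|^p,
\]
with $M:=\max_i M_i$. Since the coordinate maps $u\mapsto a_i$ are continuous on the finite-dimensional space $L$, we have $\max_i|a_i|\le K\|u\|_L$ for some $K>0$, and therefore $\sum|a_i|^p\le dK^p\|u\|_L^p$. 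Combining, $f_n(u)\le C\|u\|_L^p$ with $C:=MdK^p$, uniformly in $n\in\mathbb N_0$. (The same bound, with the same $C$, applies to $f_0$.)

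Uniform equicontinuity of the family $(f_n)_{n\in\mathbb N_0}$ is then immediate from (\ref{(e6.2)}):
\[
   |f_n(u)-f_n(v)|\le f_n(u-v)\le C\|u-v\|_L^p,\qquad n\in\mathbb N_0,\ u,v\in L.
\]
Finally I would conclude by contradiction. If convergence on $K_0$ were not uniform, there would exist $\varepsilon>0$, a subsequence $(n_k)$, and $u_k\in K_0$ with $|f_{n_k}(u_k)-f_0(u_k)|\ge\varepsilon$. By compactness of $K_0$ we may assume $u_k\to u^*\in K_0$. Splitting
\[
   |f_{n_k}(u_k)-f_0(u_k)|\le|f_{n_k}(u_k)-f_{n_k}(u^*)|+|f_{n_k}(u^*)-f_0(u^*)|+|f_0(u^*)-f_0(u_k)|,
\]
the outer two terms tend to $0$ by the uniform equicontinuity just established, while the middle term tends to $0$ by pointwise convergence at $u^*$—contradicting the choice of $(u_k)$. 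The delicate step is really only the uniform bound; everything else is standard compactness plus equicontinuity.
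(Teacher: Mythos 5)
Your proof is correct, but it obtains the uniform control over the family $(f_n)$ by a genuinely different mechanism than the paper. The paper forms the upper envelopes $f^{(j)}:=\sup\{f_n: n\ge j\}$, which belong to ${\mathcal F}_L$ and converge monotonically (in fact decreasingly, despite the paper's wording) to $f_0$ pointwise; Dini's theorem then gives uniform convergence of the envelopes on the compact set $K_0$, whence the domination $f_j(v)\le (b+1)f_0(v)$ for all $v\in L$, and the argument is finished with a finite net $D\subseteq K_0$ measured in the metric $\rho_0(u,v)=f_0(u-v)$ together with (\ref{(e6.2)}). You instead extract a uniform bound $f_n(u)\le C\|u\|_L^p$ directly from pointwise convergence on a basis, using subadditivity and absolute $p$-homogeneity plus the boundedness of the coordinate functionals; combined with (\ref{(e6.2)}) this yields uniform equicontinuity of the whole family with modulus $Ct^p$ (a valid modulus for every $p\in(0,\infty)$, since $Ct^p\to 0$ as $t\to 0^{+}$), and the conclusion follows from sequential compactness of $K_0$ and pointwise convergence at the limit point. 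Your route is more elementary in that it avoids Dini's theorem entirely, and the bound it produces is uniform over all of $L$ rather than only a multiple of $f_0$; what the paper's approach buys is that the domination $f_j\le (b+1)f_0$ is intrinsic to the functionals and requires no choice of basis or appeal to equivalence of norms on $L$. Both exploit finite-dimensionality in equivalent ways, and both reduce to compactness of $K_0$, which is exactly what the hypothesis $f_0(u)>0$ for $u\ne 0$ guarantees via the discussion preceding the lemma.
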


\begin{proof} 
Since the sequence $(f^{(j)})_{j \in \mathbb N}$, $f^{(j)}:= \sup \{ f_n : n \geq j \}$ is increasing and converges to $f_0$ pointwise, it converges to $f_0$ uniformly on $K_0$ by Dini's theorem, in particular, $f^{(j)}(u)-f_0(u) \leq b$ for some $b \in (0,\infty)$ and all $u \in K_0$ and $j \in \mathbb N$. If $v \in L \setminus \{ 0 \}$ then $(f_0(v))^{-\frac{1}{p}} v \in K_0$, which yields $f^{(j)}(v)(f_0(v))^{-1} - 1 \leq b$, hence, 
\begin{equation}   \label{(e6.5)}
   f_j(v) \leq f^{(j)}(v) \leq (b+1)f_0(v) \,\, , j \in \mathbb N \, .
\end{equation}
Let $\varepsilon \in (0,\infty)$. Choose a finite subset $D$ of $K_0$ such that for any $v \in K_0$ there exists $u_v \in D$ with
\begin{equation}   \label{(e6.6)}
    f_0(v-u_v) < \frac{2\varepsilon}{3(b+2)} \,\, .
\end{equation}
Now, choose $n_0 \in \mathbb N$ such that
\begin{equation}    \label{(e6.7)}
   | f_0(u)-f_n(u) | < \frac{\varepsilon}{3}
\end{equation}
for all $u \in D$ and any $n \geq n_0$. From (\ref{(e6.2)}),  (\ref{(e6.5)}), (\ref{(e6.6)}) and (\ref{(e6.7)}) the inequality 
\begin{eqnarray*}
    | f_0(v) - f_n(v) | & \leq & | f_0(u)-f_0(u_v) | + | f_0(u_v)-f_n(u_v) | + | f_n(u_v) - f_n(v) |\\
                              & < & \frac{2\varepsilon}{3(b+2)} + \frac{\varepsilon}{3} + \frac{2\epsilon}{3(b+2)} (b+1) \\
                              & = & \varepsilon
\end{eqnarray*}
follows for $v \in K_0 \setminus D$, $n \geq n_0$.
\end{proof}

\begin{theorem}   \label{t6.2}
Let $\rm w^*$-$\lim \mu_n = \mu_0.$
\begin{itemize}
\item[(i)]   Let $p\in (0,\infty)$. If $d_p(\mu_0 ; S_k) =0$ or $[{\mathcal T}(S_k)]_{\mu_0}$ is a $k$-dimensional subspace of $L^p(\mu_0)$, then (R1) is true for $S=S_k$. 
\item[(ii)]   Let $p \in (1,\infty)$. If ${\mathcal T}(S_k \cup \{ 0 \})]_{\mu_0}$ is a $(k+1)$-dimensional subspace of $L^p(\mu_0)$, then (R2) and (R3) are satisfied for $S=S_k$.
\end{itemize}
\end{theorem}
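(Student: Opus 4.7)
My approach is to reduce both parts to a compactness argument on the finite-dimensional space $L_0:=\mathrm{span}\{e_{x_1},\dots,e_{x_k}\}$, regarded as a space of continuous bounded functions on $\Gamma$, using Lemma \ref{l6.1} to control the coefficients of approximate minimizers. The trivial sub-case of (i) in which $d_p(\mu_0;S_k)=0$ follows at once from Theorem \ref{t3.1} (it is exactly Corollary \ref{c3.3}(i)), since then $0\le\liminf d_p(\mu_n;S_k)\le\limsup d_p(\mu_n;S_k)\le 0$. Henceforth the $k$-dimensionality of $[\mathcal T(S_k)]_{\mu_0}$ in $L^p(\mu_0)$ is in force; for part (ii) this is a consequence of the stronger $(k+1)$-dimensionality assumption, so the conclusion of part (i) will already be available when one turns to (R2) and (R3).

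The main bookkeeping step is to define $f_n:L_0\to[0,\infty)$ by $f_n(u):=\int|u|^p\,d\mu_n$ if $p\in(0,1]$, and $f_n(u):=\bigl(\int|u|^p\,d\mu_n\bigr)^{1/p}$ if $p\in(1,\infty)$. In either case $f_n\in\mathcal F_{L_0}$: subadditivity comes from $|a+b|^p\le|a|^p+|b|^p$ (for $p\le1$) or Minkowski's inequality (for $p>1$), and absolute homogeneity holds with order $p$ or $1$, respectively. Since each $|u|^p$ is continuous and bounded on $\Gamma$ for $u\in L_0$, weak$^*$-convergence of $(\mu_n)$ gives $f_n\to f_0$ pointwise; the dimension hypothesis makes $f_0$ positive definite on $L_0$. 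Lemma \ref{l6.1} then produces uniform convergence on $K_0:=\{u\in L_0:f_0(u)=1\}$, and together with (\ref{(e6.3)})--(\ref{(e6.4)}) this yields, for any fixed norm $\|\cdot\|$ on $L_0$, a constant $c>0$ and an $n_0\in\mathbb N$ with $f_n(u)\ge c\|u\|^{\alpha}$ (where $\alpha=p$ if $p\le1$, $\alpha=1$ if $p>1$) for all $n\ge n_0$ and $u\in L_0$.

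Now choose $\tau_n\in L_0$ with $\int|1-\tau_n|^p\,d\mu_n\le d_p(\mu_n;S_k)+1/n$ in part (i), or take $\tau_n:=\phi_{n,p}$ in part (ii) (which exists and is unique for $p>1$). Subadditivity yields $f_n(\tau_n)\le f_n(1)+f_n(1-\tau_n)$; the first summand converges to $f_0(1)<\infty$, and the second is bounded by Theorem \ref{t3.1} together with $d_p(\mu_0;S_k)<\infty$. The uniform lower bound on $f_n$ then forces $\|\tau_n\|$ to be bounded, so some subsequence satisfies $\tau_{n'}\to\tau_*\in L_0$ in coefficients, whence $|1-\tau_{n'}|^p\to|1-\tau_*|^p$ uniformly on $\Gamma$. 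Combining uniform convergence with w$^*$-$\lim\mu_{n'}=\mu_0$ and $\sup_n\|\mu_n\|<\infty$ gives
\[
\int|1-\tau_{n'}|^p\,d\mu_{n'}\;\longrightarrow\;\int|1-\tau_*|^p\,d\mu_0\;\ge\;d_p(\mu_0;S_k),
\]
which for part (i) furnishes $\liminf d_p(\mu_n;S_k)\ge d_p(\mu_0;S_k)$ and hence (R1) via Theorem \ref{t3.1}.

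For part (ii), invoking the just-established (R1), the displayed identity reads $\int|1-\tau_*|^p\,d\mu_0=d_p(\mu_0;S_k)$, so $\tau_*\in L_0\subseteq[\mathcal T(S_k)]_{\mu_0,p}$ attains the distance; uniqueness of the metric projection for $p\in(1,\infty)$ forces $\tau_*=\phi_{0,p}$. Since every subsequence of $(\phi_{n,p})$ has bounded coefficients and every convergent sub-subsequence must have limit $\phi_{0,p}$, the full sequence satisfies $\phi_{n,p}\to\phi_{0,p}$ in $\|\cdot\|$, and hence uniformly on $\Gamma$. Bounded convergence delivers (R3), and (R2) is then immediate from $d_p(\mu_n,\mu_0;S_k)=\int|1-\phi_{n,p}|^p\,d\mu_0\to\int|1-\phi_{0,p}|^p\,d\mu_0=d_p(\mu_0;S_k)$. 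I expect the only non-routine step to be the verification that Lemma \ref{l6.1} is applicable (the membership $f_n\in\mathcal F_{L_0}$ and positive definiteness of $f_0$); once this is in hand, the remainder is a standard compactness-and-uniqueness argument.
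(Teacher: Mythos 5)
Your proof is correct, and it rests on the same foundation as the paper's — Lemma \ref{l6.1} applied to the functionals $\tau\mapsto\int|\tau|^p\,d\mu_n$ (or their $p$-th roots) on a finite-dimensional polynomial space — but it deploys that lemma differently. The paper converts the uniform convergence on $K_0$ directly into the comparison inequality $(1-\varepsilon)^p\int|\tau|^p\,d\mu_0<\int|\tau|^p\,d\mu_n$ valid on all of $[{\mathcal T}(S_k\cup\{0\})]_{\mu_0}$ for large $n$, obtains (R1) and (R2) by taking infima resp.\ by substituting $\tau=1-\phi_{n,p}$, and then gets (R3) by citing the uniform-rotundity argument of Theorem \ref{t3.9}. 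You instead use the lemma only to extract the coercive bound $f_n(u)\geq c\|u\|^{\alpha}$ on $L_0$, which bounds the coefficients of (near-)minimizers, and then run a compactness-plus-uniqueness argument. This is slightly longer for (R1)/(R2) but buys a genuinely stronger conclusion for (R3): you obtain convergence $\phi_{n,p}\to\phi_{0,p}$ of the coefficient vectors, hence uniformly on $\Gamma$, rather than merely in $L^p(\mu_0)$, and you bypass Theorem \ref{t3.9} entirely. (As a by-product your argument for (ii) only invokes the $k$-dimensionality of $[{\mathcal T}(S_k)]_{\mu_0}$ rather than the full $(k+1)$-dimensionality of $[{\mathcal T}(S_k\cup\{0\})]_{\mu_0}$, which the paper's route does use to make inequality (\ref{(e6.8)}) available at $\tau=1-\phi_{n,p}$.) Two small points of hygiene: the expressions $f_n(1)$ and $f_n(1-\tau_n)$ require $f_n$ to be read on ${\rm span}(\{1\}\cup L_0)$ rather than on $L_0$ — harmless, since the coercive lower bound is only ever applied to $\tau_n\in L_0$ — and the passage from the subsequential inequality to $\liminf d_p(\mu_n;S_k)\geq d_p(\mu_0;S_k)$ should be routed through a subsequence realizing the $\liminf$; both are routine and do not affect correctness.
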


\begin{proof}
(i) According to Corollary \ref{c3.3}(i) we have only to consider the case $d_p(\mu_0) >0$. Let $p \in [1,\infty)$ and set $L:= [{\mathcal T}(S_k \cup \{ 0 \}) ]_{\mu_0}$. Define $f_j \in {\mathcal F}_L$ by $f_j(\tau):= (\int |\tau |^p \, d\mu_j)^\frac{1}{p}$, $\tau \in L$, $j \in {\mathbb N}_0$. Note, $f_0(\tau)>0$ for all $\tau \in L \setminus \{ 0 \}$. Let $\varepsilon \in (0,\infty)$. By Lemma \ref{l6.1} there exists $n_0 \in \mathbb N$ such that $f_0(\tau)-\varepsilon \leq f_n(\tau)$ for all $\tau \in K_0$ and all $n \geq n_0$. Therefore,
\begin{equation}   \label{(e6.8)}
    (1-\varepsilon)^p \cdot \int | \tau |^p \, d\mu_0 < \int | \tau |^p \, d\mu_n
\end{equation}
for all $\tau \in L$ and all $n \geq n_0$. Taking the infimum over all $\tau$ of the form $\tau=1-\widetilde{\tau}$, $\widetilde{\tau} \in [{\mathcal T}(S_k)]_{\mu_0}$, we get $(1-\varepsilon)^p d_p(\mu_0) \leq d_p(\mu_n)$ for all $n \geq n_0$, which implies that $d_p(\mu_0) \leq \lim\inf d_p(\mu_n)$ since $\varepsilon \in (0,\infty)$ was arbitrary. Taking into account Theorem \ref{t3.1}, the proof is completed for $p \in [1,\infty)$. In the case $p \in (0,1)$ a similar proof works applying Lemma \ref{l6.1} to the functionals $f_j(\tau) := \int | \tau |^p \, d\mu_j$ with $j \in {\mathbb N}$.

(ii) Set $\tau:= \phi_{n,p}$ in (\ref{(e6.8)}), let $\varepsilon$ tend to $0$ and $n$ tend to $\infty$. We obtain $\lim \sup d_p(\mu_n,\mu_0) \leq \lim d_p(\mu_n) = d_p(\mu_0)$ by (i). Now, (R2) follows from (\ref{(e6.1)}) and, eventually, (R3) from Theorem \ref{t3.9}.
\end{proof}

\begin{corollary}   \label{c6.3}
   If $\lim \| \mu_0-\mu_n \|=0$, then (R1) is true for $S=S_k$ and $p \in (0,\infty)$. 
\end{corollary}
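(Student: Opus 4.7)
The plan is to reduce the corollary to Theorem~\ref{t6.2}(i) in two steps: first I would use Corollary~\ref{c3.4} to restrict attention to the case $\mu_n \leq \mu_0$, and then I would replace $S_k$ by a maximal subset $S' \subseteq S_k$ whose characters are linearly independent as elements of $L^p(\mu_0)$.

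The first step is to invoke Corollary~\ref{c3.4}: it suffices to establish the $\liminf$ bound $d_p(\mu_0 ; S_k) \leq \liminf d_p(\mu_n ; S_k)$ under the additional hypothesis $\mu_n \leq \mu_0$. The point of this reduction is that $\mu_n \leq \mu_0$ forces $\mu_n \ll \mu_0$, so every $\mu_0$-almost-everywhere identity of functions automatically persists $\mu_n$-almost everywhere.

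Next I would set $r := \dim [{\mathcal T}(S_k)]_{\mu_0}$ and pick $S' := \{x_{j_1}, \ldots, x_{j_r}\} \subseteq S_k$ such that the equivalence classes of $e_{x_{j_1}}, \ldots, e_{x_{j_r}}$ form a basis of $[{\mathcal T}(S_k)]_{\mu_0}$ in $L^p(\mu_0)$. By construction, $[{\mathcal T}(S')]_{\mu_0} = [{\mathcal T}(S_k)]_{\mu_0}$ (hence $d_p(\mu_0 ; S') = d_p(\mu_0 ; S_k)$) and $[{\mathcal T}(S')]_{\mu_0}$ is $|S'|$-dimensional. For each $j \notin \{j_1, \ldots, j_r\}$ there exist coefficients $a_{j,i} \in \mathbb{C}$ with $e_{x_j} = \sum_{i=1}^{r} a_{j,i} e_{x_{j_i}}$ $\mu_0$-a.e.; by the reduction step the same identity holds $\mu_n$-a.e., so $[{\mathcal T}(S_k)]_{\mu_n} = [{\mathcal T}(S')]_{\mu_n}$ and $d_p(\mu_n ; S_k) = d_p(\mu_n ; S')$ for every $n$. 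Since norm convergence implies $\mathrm{w}^*$-convergence, I would then apply Theorem~\ref{t6.2}(i) to $S'$, whose dimension hypothesis is satisfied by design, obtaining $\lim d_p(\mu_n ; S') = d_p(\mu_0 ; S')$. Chaining the three equalities yields $\lim d_p(\mu_n ; S_k) = d_p(\mu_0 ; S_k)$, which is strictly stronger than the $\liminf$ bound requested by Corollary~\ref{c3.4}.

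The degenerate case $r = 0$ — in which every $e_{x_j}$ is $\mu_0$-almost everywhere, and therefore $\mu_n$-almost everywhere, zero — would be handled directly: $d_p(\mu_n ; S_k) = \mu_n(\Gamma) \to \mu_0(\Gamma) = d_p(\mu_0 ; S_k)$ follows at once from $\lim \|\mu_0 - \mu_n\| = 0$. The main obstacle throughout is the absence of absolute continuity: without $\mu_n \ll \mu_0$, the linear dependencies among characters modulo $\mu_0$ need not descend to dependencies modulo $\mu_n$, and one cannot freely replace $S_k$ by $S'$ inside $L^p(\mu_n)$. Corollary~\ref{c3.4} is precisely what closes this gap and converts the general norm-convergent case into the absolutely continuous one in which Theorem~\ref{t6.2}(i) bites.
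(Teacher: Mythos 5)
Your proposal is correct and follows essentially the same route as the paper: reduce via Corollary~\ref{c3.4} to the case $\mu_n \leq \mu_0$, pass to a subset of $S_k$ whose characters form a basis of $[{\mathcal T}(S_k)]_{\mu_0}$, note that the spanning relations descend to $L^p(\mu_n)$ because $\mu_n \ll \mu_0$, and conclude with Theorem~\ref{t6.2}(i). (Your degenerate case $r=0$ is vacuous, since $|e_{x_j}|\equiv 1$ and $\mu_0\neq 0$ force $r\geq 1$, but including it does no harm.)
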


\begin{proof}
By Corollary \ref{c3.4} one has only to prove (R1) for a sequence $(\mu_n)$ satisfying $\lim \|\mu_0-\mu_n \|=0$ and $\mu_n \leq \mu_0$, $n \in \mathbb N$. Assume without loss of generality that the family of functions $e_{x_1}, ... ,e_{x_j}$, $j \leq k$, is a basis of $[{\mathcal T}]_{\mu_0}$. Since $\mu_n \leq \mu_0$ these functions span $[{\mathcal T}]_{\mu_n}$ and the claim is proved by Theorem \ref{t6.2}(i).
\end{proof}

If $p=2$ and $\lim\| \mu_0-\mu_n\| =0$, the condition of Theorem \ref{t6.2}(ii) can be weakened slightly. Note, that for a singleton $S=S_1$ the conditions (i) and (ii) of Proposition \ref{p3.14} are satisfied. Thus, we can state the following corollary.

\begin{corollary}    \label{c6.4}
   If $\lim\| \mu_0-\mu_n \| =0$ and the space $[{\mathcal T}(S_k)]_{\mu_0}$ is $k$-dimensional, then (R3) is satisfied for $S=S_k$ and $p=2$.
\end{corollary}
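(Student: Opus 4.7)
The plan is to establish the result by induction on $j = 1, \ldots, k$, showing that for every $y \in G$ the orthoprojection $P_n^{(j)} e_y$ of $e_y$ onto $[\mathcal{T}(\{x_1, \ldots, x_j\})]_{\mu_n, 2}$ satisfies the two conditions of Proposition \ref{p3.14}, namely
$$\sup_n \esssup_{\mu_n} |P_n^{(j)} e_y| < \infty \quad \text{and} \quad \lim \int |P_0^{(j)} e_y - P_n^{(j)} e_y|^2 \, d\mu_0 = 0.$$
Applying this with $j = k$ and $y = 0$ yields $P_n^{(k)} 1 = \phi_{n, 2}(S_k)$, so the second condition is exactly the desired relation (R3).

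For the base case $j = 1$, I would exploit that $[\mathcal{T}(\{x_1\})]_{\mu_n, 2}$ is one-dimensional and spanned by $e_{x_1}$, so the projection has the explicit form
$$P_n^{(1)} e_y = \frac{1}{\mu_n(\Gamma)} \left(\int e_{y - x_1} \, d\mu_n\right) e_{x_1}.$$
Since $|e_{x_1}| \equiv 1$ on $\Gamma$, the modulus of $P_n^{(1)} e_y$ is bounded by $1$, giving (i). The hypothesis $\lim \|\mu_0 - \mu_n\| = 0$ forces both $\mu_n(\Gamma) \to \mu_0(\Gamma)$ and $\int e_{y-x_1} \, d\mu_n \to \int e_{y-x_1} \, d\mu_0$, so the scalar coefficient converges in $\mathbb{C}$; squaring and integrating against the finite measure $\mu_0$ then delivers (ii).

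For the inductive step, suppose the claim holds for some $1 \leq j < k$. The $k$-dimensionality of $[\mathcal{T}(S_k)]_{\mu_0}$ guarantees $e_{x_{j+1}} \notin [\mathcal{T}(\{x_1, \ldots, x_j\})]_{\mu_0, 2}$, so Proposition \ref{p3.14} applies with $S = \{x_1, \ldots, x_j\}$ and $x = x_{j+1}$, transferring both conditions from $P_n^{(j)} e_y$ to $P_n^{(j+1)} e_y$ for every $y \in G$. The substantive technical work is already absorbed into Proposition \ref{p3.14}; the main point to watch is that the $k$-dimensionality hypothesis keeps each new character $e_{x_{j+1}}$ outside the previously constructed subspace at every stage of the induction, so that Proposition \ref{p3.14} remains applicable throughout.
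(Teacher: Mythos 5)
Your proposal is correct and is exactly the argument the paper intends (the remark preceding the corollary: verify conditions (i) and (ii) of Proposition \ref{p3.14} for the singleton $\{x_1\}$, then iterate Proposition \ref{p3.14}, using $k$-dimensionality to keep $e_{x_{j+1}}$ outside the span of $e_{x_1},\dots,e_{x_j}$ at each stage). Your write-up merely fills in the details the paper leaves implicit, namely the explicit rank-one projection formula for the base case and the observation that $P_n^{(k)}e_0=\phi_{n,2}(S_k)$ turns condition (ii) into (R3).
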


As the following example shows the condition w*-$\lim \mu_n = \mu_0$ of Theorem \ref{t6.2} cannot be replaced by m-$\lim \mu_n = \mu_0$. 

\begin{example}   \label{e6.5}
Let $G=\mathbb Z$, $\Gamma = [0,2\pi)$, $S_1=\{ 1 \}$. Define $d\mu_n=(n1_{[0,\frac{1}{n})}+ 1_{[\frac{1}{n},2\pi)}) \, d\lambda$, $n \in \mathbb N$. Then w*-$\lim \mu_n = \frac{1}{2\pi} \delta_0 + \lambda=: \mu_0$ and m-$\lim \mu_n = \lambda$. Since $\int |1-ae_1|^2 \, d\mu_0 = \frac{1}{2\pi} |1-a|^2 +1 +|a|^2$, it is not hard to see that $\min \{ \int | 1-ae_1|^2 \, d\mu_0 : a \in {\mathbb C}\}$ is attained for $a=(1+2\pi)^{-1}$ and $d_2(\mu_0)=(2+2\pi)(1+2\pi)^{-1}$. By Theorem \ref{t6.2}(i), $\lim d_2(\mu_n) = d_2(\mu_0)$, which does not coincide with $d_2(\lambda)=1$. Moreover, if $\phi_{n,2}=1-a_ne_1$, a simple computation reveals that $\lim a_n =(1+2\pi)^{-1}$, which implies 
\begin{eqnarray*}
    \lim d_2(\mu_n, \lambda) & = & \lim \int |1-a_ne_1|^2 \, d\lambda = \lim (1+|a_n|^2) \\
                                          & = & 1+(1+2\pi)^{-2} > 1 = d_2(\lambda) \,\, .
\end{eqnarray*}
Note also, $\lim d_2(\lambda, \mu_n) = \lim \int 1 \, d\mu_n = \frac{1+2\pi}{2\pi} > 1=d_2(\lambda)$.
\end{example}


\section{Periodic observations}

This section is devoted to a rather incomplete discussion of the observation set $S=x+H$, where $H$ is a closed subgroup of an LCA group $G$ and $x$ is a given element of $G \setminus H$. Since little is known for $p \not= 2$ most of our results pertain the stationary case $p=2$. Let $A:=\{ \gamma \in \Gamma : e_y(\gamma)=1 \,\,\ {\rm for} \,\, {\rm all} \, y \in H \}$ be the annihilator of $H$. Recall that $A$ is a closed subgroup of $\Gamma$, and thus, an LCA group with  respect to the induced topology. To the end of this section we assume that $A$ is at most countable and denote the number of its elements by ${\rm card} A$. It follows that $A$ is discrete, hence, metrizable. 

A subset $T$ of $\Gamma$ is called a transversal (with respect to $A$) if it meets each $A$-coset just once, equivalently, if $\cup_{\alpha \in A} (\alpha +T)=\Gamma$ and $T \cap(\alpha +T)=\emptyset$ for all $\alpha \in A \setminus \{ 0 \}$, cf.~\cite[Lemma 3.3]{[22]}. According to \cite[Thm.~1]{[7]} we can and shall suppose $T \in \mathcal B$. 

For a function $f: T \to \mathbb C$ define a function $V_xf :\Gamma \to \mathbb C$ by
\begin{equation*}
    (V_xf)(\gamma) := e_x(\alpha) f(\gamma - \alpha)  \quad {\rm for} \,\, \gamma \in (\alpha+T), \, \alpha \in A \, .
\end{equation*}
If $\mu \in \mathcal M$ and $\alpha \in A$, let $\mu^{(\alpha)}$ be a measure on ${\mathcal B}(T)$ satisfying $\mu^{(\alpha)} (B)=\mu(\alpha + B)$ for $B \in {\mathcal B}(T)$, and let $\widetilde{\mu}$ be the regular finite measure $\widetilde{\mu} := \sum_\alpha \mu^{(\alpha)}$, where $\sum_\alpha$ means the sum over all elements $\alpha$ of $A$. 

\begin{lemma}    \label{l7.1}
(cf.~\cite[Lemma 2.1]{[14]})  For $p \in (0,\infty)$, the linear operator $V_x$ establishes an isometric isomorphism between $L^p(\widetilde{\mu})$ and $[{\mathcal T}(x+H)]_{\mu,p}$ satisfying $V_x \tau = \tau$ for all $\tau \in {\mathcal T(x+H)}$ and $V_x^{-1}f=f \cdot 1_T$ for all $f \in [{\mathcal T}(x+H)]_{\mu,p}$.
\end{lemma}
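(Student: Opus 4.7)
The plan is to verify the lemma in four stages: isometry, invariance on trigonometric polynomials, containment of the image in $[\mathcal{T}(x+H)]_{\mu,p}$ via density, and identification of the inverse. Since the transversal $T$ is a Borel set and $A$ is at most countable, the assignment $\gamma \mapsto (\alpha_\gamma,\gamma-\alpha_\gamma)$ which sends $\gamma \in \alpha+T$ to its $A$-component $\alpha$ and $T$-component is Borel measurable. Hence $V_x f$ is Borel measurable whenever $f$ is, and $V_x$ is linear.

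First I would establish the isometry by the direct calculation
\begin{equation*}
   \int_\Gamma |V_x f|^p \, d\mu = \sum_{\alpha \in A} \int_{\alpha + T} |V_x f|^p \, d\mu = \sum_\alpha \int_T |f(t)|^p \, d\mu^{(\alpha)}(t) = \int_T |f|^p \, d\widetilde{\mu} \, ,
\end{equation*}
using $|e_x(\alpha)|=1$, the change of variables $\gamma = \alpha + t$, and the definitions of $\mu^{(\alpha)}$ and $\widetilde{\mu}$. Next I would verify $V_x \tau = \tau$ for $\tau \in \mathcal{T}(x+H)$; by linearity it suffices to take $\tau = e_{x+y}$ with $y \in H$ and compute, for $\gamma = \alpha+t$ with $\alpha \in A$, $t \in T$,
\begin{equation*}
   (V_x(\tau|_T))(\gamma) = e_x(\alpha)e_{x+y}(t) \quad \text{and} \quad \tau(\gamma)=e_{x+y}(\alpha)e_{x+y}(t)=e_x(\alpha)e_y(\alpha)e_{x+y}(t) \, ,
\end{equation*}
which agree because $e_y(\alpha)=1$ for $\alpha$ in the annihilator $A$ of $H$.

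The image $V_x(L^p(\widetilde{\mu}))$ is the isometric image of a complete space, hence a closed subspace of $L^p(\mu)$, and by the preceding step it contains $\mathcal{T}(x+H)$, so it contains $[\mathcal{T}(x+H)]_{\mu,p}$. To obtain the reverse inclusion, the key step is to show that the restriction map $\tau \mapsto \tau|_T$ sends $\mathcal{T}(x+H)$ onto a dense subset of $L^p(\widetilde{\mu})$. Writing $\tau|_T = e_x|_T \cdot \sigma|_T$ with $\sigma \in \mathcal{T}(H)$, and noting that the characters $e_y$, $y\in H$, are constant on $A$-cosets and therefore descend to the characters of the quotient $\Gamma/A$ (the dual of $H$ by Pontryagin duality), this density reduces via the natural Borel identification of $T$ with $\Gamma/A$ and the corresponding identification of $\widetilde{\mu}$ with a finite regular Borel measure on $\Gamma/A$ to the well-known fact that finite linear combinations of characters are dense in $L^p$ of a finite regular Borel measure on an LCA group.

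The main obstacle is precisely this density assertion: once it is in hand the rest of the lemma is essentially a bookkeeping exercise, because $V_x$ maps the dense set of restrictions into $\mathcal{T}(x+H) \subseteq [\mathcal{T}(x+H)]_{\mu,p}$ and, being continuous, must map all of $L^p(\widetilde{\mu})$ into $[\mathcal{T}(x+H)]_{\mu,p}$. Finally, the formula $V_x^{-1}f = f \cdot 1_T$ is immediate once surjectivity and injectivity are proved: if $f = V_x g$ with $g \in L^p(\widetilde{\mu})$, then on $T$ (where we may take $\alpha = 0$) we have $f(t)=(V_x g)(t)=e_x(0)g(t)=g(t)$, so $g$ is represented by $f \cdot 1_T$ as an element of $L^p(\widetilde{\mu})$.
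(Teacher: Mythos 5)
The paper does not actually prove Lemma 7.1; it is quoted from \cite[Lemma 2.1]{[14]}, so there is no in-paper argument to compare against. Judged on its own terms, your outline is the right one and all four stages are sound: the isometry computation via the coset decomposition and the definition of $\mu^{(\alpha)}$ and $\widetilde{\mu}$ is correct, the verification that $V_x e_{x+y}=e_{x+y}$ correctly uses $e_y(\alpha)=1$ for $\alpha$ in the annihilator, the ``closed subspace containing ${\mathcal T}(x+H)$'' argument gives one inclusion, and you rightly isolate the density of $\{\tau|_T:\tau\in{\mathcal T}(x+H)\}$ in $L^p(\widetilde{\mu})$ as the only step with real content.

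That density step is, however, exactly where your sketch leaves work undone, and two technical points deserve explicit attention. First, the ``natural Borel identification of $T$ with $\Gamma/A$'': the restricted quotient map $q|_T$ is a Borel bijection, but to transport an arbitrary $g\in L^p(\widetilde{\mu})$ to a Borel function on $\Gamma/A$ you need $q|_T$ to be a Borel \emph{isomorphism} (or you must argue with the pushforward measure and its completion). The countability of $A$ is what saves you, since $q^{-1}(q(E))=\bigcup_{\alpha\in A}(\alpha+E)$ is Borel for Borel $E\subseteq T$, but this should be said; you should also check that the pushforward of $\mu$ to $\Gamma/A$ is again \emph{regular} (inner regularity survives because continuous images of compact sets are compact), since regularity is needed to invoke density of characters. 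Second, the ``well-known fact'' that trigonometric polynomials are dense in $L^p$ of a finite regular Borel measure on an LCA group is standard only for $p\in[1,\infty)$ (via the uniqueness theorem for Fourier--Stieltjes transforms); for $p\in(0,1)$ it follows from the finite-measure inequality $\int|f|^p\,d\rho\le(\int|f|\,d\rho)^p\rho(\Lambda)^{1-p}$ together with the $L^1$ case, or from the reference \cite{[12]} that the paper cites for precisely such questions --- either way it needs a sentence. Finally, a small caveat on the inverse formula: $f=V_xg$ only determines $f$ up to $\mu$-null sets, and a $\mu$-null subset of $T$ (i.e.\ $\mu^{(0)}$-null) need not be $\widetilde{\mu}$-null, so the identity $V_x^{-1}f=f\cdot 1_T$ must be read for the canonical representative $V_xg$ of the class $f$, which is how you in effect use it.
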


Choose a Radon-Nikod\'ym derivative $h_\mu^{(\alpha)} := \frac{d\mu^{(\alpha)}}{d\widetilde{\mu}}$ and note that 
\begin{equation}   \label{(e7.1)}
   \sum_\alpha h_\mu^{(\alpha)} = 1 \quad \widetilde{\mu}{\rm -} \, {\rm a.e.}
\end{equation}
For $y \in G$ set
\begin{equation}   \label{(e7.2)}
   \widetilde{\phi}_\mu^{(y)} := e_y \sum_\alpha e_{y-x}(\alpha) h_\mu^{(\alpha)}
\end{equation}
and denote $\widetilde{\phi}_\mu := \widetilde{\phi}_\mu^{(0)}$. 

\begin{theorem}   \label{t7.2}
   The following assertions are true:
\begin{itemize}
\item[(i)]  For any $y \in G$ the orthogonal projection $\phi_{\mu,2}^{(y)}$ of $e_y$ onto $[{\mathcal T}(x+H)]_{\mu,2}$ is equal to $V_x \widetilde{\phi}_\mu^{(y)}$.

\item[(ii)] The distance $d_2(\mu ; x+H)$ equals to
\begin{equation}   \label{(e7.3)}
    d_2(\mu ; x+H) = \int_T (1-|\widetilde{\phi}_\mu|^2) \, d\widetilde{\mu} = \int_T \sum_{\alpha, \beta \in A} (1- e_x(\beta-\alpha))h_\mu^{(\alpha)}h_\mu^{(\beta)} \, d\widetilde{\mu} \, .
\end{equation}

\item[(iii)]  If ${\rm card} A=2$ and $A=\{ 0, \alpha \}$, then for $p\in (0,\infty)$ the restriction of the metric projection $\phi_{\mu,p}$ to $T$ is equal to
\begin{equation}    \label{(e7.4)}
    \widetilde{\psi} := \left( \left( h_\mu^{(0)} \right)^\frac{1}{p-1} - \left(h_\mu^{(\alpha)} \right)^\frac{1}{p-1} \right) \cdot \left( \left( h_\mu^{(0)} \right)^\frac{1}{p-1} + \left( h_\mu^{(\alpha)} \right)^\frac{1}{p-1} \right) \quad \widetilde{\mu} \, {\rm -} \, {\rm a.e.}
\end{equation}
and $\phi_{\mu,p}(\gamma) = -\widetilde{\psi}(\gamma-\alpha)$, $\gamma \in \alpha + T$. The distance $d_p(\mu ; x+H)$ equals to $2^p \cdot \int_T \Big( \left( h_\mu^{(0)} \right)^\frac{1}{p-1} + \left( h_\mu^{(\alpha)} \right)^\frac{1}{p-1} \Big) h_\mu^{(0)}h_\mu^{(\alpha)} \, d\widetilde{\mu}$.
\end{itemize}
\end{theorem}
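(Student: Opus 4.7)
The plan is to use the isometric isomorphism $V_x\colon L^p(\widetilde\mu)\to[\mathcal T(x+H)]_{\mu,p}$ of Lemma \ref{l7.1} together with the disintegration
\begin{equation*}
   \int_\Gamma f\,d\mu \;=\; \sum_{\alpha\in A}\int_T f(\gamma+\alpha)\,h^{(\alpha)}_\mu(\gamma)\,d\widetilde\mu(\gamma)
\end{equation*}
to reduce each minimisation in $L^p(\mu)$ to a pointwise problem on $T$. For (i), write a generic element of $[\mathcal T(x+H)]_{\mu,2}$ as $V_xg$ with $g\in L^2(\widetilde\mu)$. Since $(V_xg)(\gamma+\beta)=e_x(\beta)g(\gamma)$ for $\gamma\in T$ and $|e_x(\beta)|=1$, the disintegration yields
\begin{equation*}
  \|e_y-V_xg\|^2_{L^2(\mu)} \;=\; \int_T \sum_{\beta\in A}\bigl|e_y(\gamma)\,e_{y-x}(\beta)-g(\gamma)\bigr|^2 h^{(\beta)}_\mu(\gamma)\,d\widetilde\mu.
\end{equation*}
For each fixed $\gamma$ the inner sum is a weighted squared $\ell^2$-distance from $g(\gamma)\in\mathbb C$ to the points $c_\beta(\gamma):=e_y(\gamma)e_{y-x}(\beta)$ with weights summing to $1$ by (\ref{(e7.1)}); the minimiser is the weighted barycentre $\sum_\beta c_\beta(\gamma)h^{(\beta)}_\mu(\gamma)$, which is exactly $\widetilde\phi^{(y)}_\mu(\gamma)$.

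For (ii), specialise (i) to $y=0$, whence $\phi_{\mu,2}=V_x\widetilde\phi_\mu$. By Pythagoras, the disintegration, and the isometry of $V_x$,
\begin{equation*}
   d_2(\mu;x+H)\;=\;\|1\|^2_{L^2(\mu)}-\|\phi_{\mu,2}\|^2_{L^2(\mu)}\;=\;\widetilde\mu(T)-\int_T|\widetilde\phi_\mu|^2\,d\widetilde\mu\;=\;\int_T\bigl(1-|\widetilde\phi_\mu|^2\bigr)d\widetilde\mu.
\end{equation*}
The second form follows by expanding $|\widetilde\phi_\mu|^2=\sum_{\alpha,\beta}e_x(\beta-\alpha)\,h^{(\alpha)}_\mu h^{(\beta)}_\mu$ and rewriting $1=\bigl(\sum_\alpha h^{(\alpha)}_\mu\bigr)^2=\sum_{\alpha,\beta}h^{(\alpha)}_\mu h^{(\beta)}_\mu$ via (\ref{(e7.1)}).

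For (iii), since $A=\{0,\alpha\}$ has order $2$ and $x\notin H=A^\perp$, Pontryagin duality forces $e_x(\alpha)\neq 1$, while $e_x(\alpha)^2=e_x(2\alpha)=1$ yields $e_x(\alpha)=-1$. Hence $\int_\Gamma|1-V_xg|^p d\mu = \int_T\bigl[|1-g|^p h^{(0)}_\mu+|1+g|^p h^{(\alpha)}_\mu\bigr]d\widetilde\mu$, and one minimises pointwise $F(g)=|1-g|^p h^{(0)}+|1+g|^p h^{(\alpha)}$ over $g\in\mathbb C$. A symmetry argument in the imaginary part of $g$ confines the minimiser to real $g\in[-1,1]$, and setting the derivative of $(1-g)^p h^{(0)}+(1+g)^p h^{(\alpha)}$ to zero gives $(1-g)^{p-1}h^{(0)}=(1+g)^{p-1}h^{(\alpha)}$, which solves algebraically to the stated $\widetilde\psi$. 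The value $\phi_{\mu,p}=-\widetilde\psi(\cdot-\alpha)$ on $\alpha+T$ is immediate from $(V_x\widetilde\psi)(\gamma)=e_x(\alpha)\widetilde\psi(\gamma-\alpha)$, and the formula for $d_p$ drops out by substituting $1\pm\widetilde\psi$ written over the common denominator $(h^{(0)}_\mu)^{1/(p-1)}+(h^{(\alpha)}_\mu)^{1/(p-1)}$.

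The main obstacle is part (iii): the critical-point derivation genuinely identifies a minimum only when $F$ is convex, i.e.\ for $p>1$; for $p\in(0,1]$ the function $F$ is concave on $[-1,1]$, so an additional argument (or an implicit restriction to $p>1$) is needed to justify that the stated $\widetilde\psi$ really realises the metric projection. A subsidiary technical point in (i) and (iii) is the measurable-selection step required to interchange pointwise minimisation with the integral over $T$; this is automatic here because the candidate minimisers are explicit measurable functions of the Radon--Nikod\'ym densities $h^{(\beta)}_\mu$.
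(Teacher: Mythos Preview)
Your proof is correct and follows the same strategy as the paper: parts (i) and (ii) are established there by the identical disintegration-plus-pointwise-minimisation (respectively Pythagoras) computation you give, while part (iii) is simply deferred to \cite{[15]}, so your explicit critical-point argument fills in what the paper omits. Your caveat about $p\in(0,1]$ is well taken---the exponent $1/(p-1)$ in (\ref{(e7.4)}) is already undefined at $p=1$, and every subsequent use of (iii) in the paper (Theorems \ref{t7.4}(ii), \ref{t7.6}(ii), \ref{t7.8}(ii),(iv)) restricts to $p\in(1,\infty)$, so the stated range $p\in(0,\infty)$ appears to be a misprint rather than a gap in your argument.
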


\begin{proof}
(i) By Lemma \ref{l7.1} the orthogonal projection of $e_y$ onto $[{\mathcal T}]_{\mu,2}$has the form $V_xg$ for some $g \in L^2(\widetilde{\mu})$. From Beppo Levi's theorem and (\ref{(e7.1)}) it follows
\begin{equation*}
\begin{split}
  \int | e_y-V_xg |^2 \, d\mu & =   \sum_\alpha \int_{\alpha+T} | e_y(\gamma)-e_x(\alpha)g(\gamma-\alpha) |^2 \mu(d\gamma) \\
                                    & =  \sum_\alpha \int_T | e_y(\alpha)e_y - e_x(\alpha)g |^2 h_\mu^{(\alpha)} \, d\widetilde{\mu} \\
                                    & =  \int_T \sum_\alpha |e_y -e_{x-y}(\alpha)g |^2 h_\mu^{(\alpha)} \, d\widetilde{\mu} \\
                                    & =  \int_T \sum_\alpha | e_y |^2 h_\mu^{(\alpha)} \, d\widetilde{\mu} - \int_T \bigg| \sum_\alpha e_{y-x}(\alpha) h_\mu^{(\alpha)} \bigg|^2 \, d\widetilde{\mu}\\
                                    &  \quad + \int_T \bigg| e_y \cdot \sum_\alpha e_{y-x}(\alpha) h_\mu^{(\alpha)} - g \bigg|^2 \, d\widetilde{\mu}
\end{split}
\end{equation*}
which becomes minimal if and only if the third integral in the sum at the end equals to $0$, i.e. $g=\widetilde{\phi}_\mu^{(y)}$. 

(ii) Note, that 
\begin{eqnarray*}
    d_2(\mu) & = & \int |1-\phi_{\mu,2}|^2 \, d\mu \\
                  & = & \int (1-|\phi_{\mu,2}|^2)\, d\mu \\
                  & = & \int ( | V_x1|^2 - | V_x\widetilde{\phi}_\mu |^2) \, d\mu \\
                  & = & \int_T (1-| \widetilde{\phi}_\mu |^2)\, d\widetilde{\mu} \\
                  & = & \int_T \bigg(\big| \sum_\alpha h_\mu^{(\alpha)} \big|^2 - \big| \sum_\alpha e_x(-\alpha) h_\mu^{(\alpha)} \big|^2 \bigg) \, d\widetilde{\mu}
\end{eqnarray*}
by Lemma \ref{l7.1} and (\ref{(e7.1)}). Write the sums out. 

(iii) The proof of (iii) is a straightforward adaption of the proof of the particular case $G=\mathbb Z$, $H=2\mathbb Z$, $\Gamma = [0,2\pi)$, $x=1$, $A= \{ 0, \pi \}$ given in \cite[Thm.~5.2]{[15]}. It is, thus, omitted.
\end{proof}

For $\mu_k, \nu \in \mathcal M$, $\mu_k << \nu$ set $w_k:=\frac{d\mu_k}{d\nu}$, $w_k^{(\alpha)} := \frac{d\mu_k^{(\alpha)}}{d\nu_k^{(\alpha)}}$, $\widetilde{w}_k := \frac{d\widetilde{\mu}_k}{d\widetilde{\nu}}$, $k \in {\mathbb N}_0$, $\alpha \in A$. Write $h_k^{(\alpha)} := h_{\mu_k}^{(\alpha)}$ and $\widetilde{\phi}_k^{(y)} := \widetilde{\phi}_{\mu_k}^{(y)}$ for short.

\begin{lemma}   \label{l7.3}
   Let $\rm m$-$\lim \mu_n = \mu_0$. If ${\rm card}A < \infty$ or $\mu_n \leq \mu_0$ for all $n \in \mathbb N$, then
\begin{equation}   \label{(e7.5)}
   \widetilde{\nu} {\rm }-\lim \widetilde{w}_n = \widetilde{w}_0 \,\, ,
\end{equation}
\begin{equation}   \label{(e7.6)}
   \widetilde{\nu}{\rm -}\lim \widetilde{\phi}_n^{(y)} \widetilde{w}_n = \widetilde{\phi}_0^{(y)} \widetilde{w}_0 \,\, ,
\end{equation}
and
\begin{equation}   \label{(e7.7)}
   \widetilde{\mu}_0{\rm -}\lim \widetilde{\phi}_n^{(y)} = \widetilde{\phi}_0^{(y)} \,\, ,
\end{equation}
for all $y \in G$.
\end{lemma}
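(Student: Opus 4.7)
My plan is to reduce each of the three convergence-in-measure statements to pointwise a.e.\ convergence by the subsequence criterion, and then to apply the dominated convergence theorem on the fibres of the projection $\pi: \Gamma \to T$ sending $\gamma \in \alpha + T$ to $\gamma - \alpha$. First I would disintegrate $\nu$ over $\widetilde\nu = \pi_* \nu$: setting $p_\alpha := d\nu^{(\alpha)}/d\widetilde\nu$, the identity $\sum_\alpha p_\alpha = 1$ $\widetilde\nu$-a.e.\ shows that $K_\gamma(\{\alpha\}) := p_\alpha(\gamma)$ defines a probability measure on $A$ for $\widetilde\nu$-a.e.\ $\gamma$. The chain rule applied to $\mu_k^{(\alpha)} \ll \nu^{(\alpha)} \ll \widetilde\nu$ then gives $d\mu_k^{(\alpha)}/d\widetilde\nu(\gamma) = w_k(\alpha + \gamma)\, p_\alpha(\gamma)$, whence
\[
   \widetilde w_k(\gamma) = \sum_{\alpha \in A} w_k(\alpha + \gamma)\, p_\alpha(\gamma), \qquad \widetilde \phi_k^{(y)}(\gamma)\, \widetilde w_k(\gamma) = e_y(\gamma) \sum_{\alpha \in A} e_{y-x}(\alpha)\, w_k(\alpha + \gamma)\, p_\alpha(\gamma)
\]
for $\widetilde\nu$-a.e.\ $\gamma \in T$, by the very definition (\ref{(e7.2)}) of $\widetilde\phi_k^{(y)}$.

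To establish (7.5) and (7.6), I would assume towards contradiction that one of them failed and fix a subsequence $(n')$ witnessing this failure. Since $\mathrm{m}$-$\lim \mu_n = \mu_0$ is equivalent to $\nu$-$\lim w_n = w_0$, I would extract a further subsequence $(n'')$ along which $w_{n''} \to w_0$ $\nu$-a.e. Applying the disintegration formula to the indicator of the exceptional set shows that for $\widetilde\nu$-a.e.\ $\gamma$ one has $w_{n''}(\alpha + \gamma) \to w_0(\alpha + \gamma)$ for $K_\gamma$-a.e.\ $\alpha \in A$. Fixing such a $\gamma$, dominated convergence on the probability space $(A, K_\gamma)$ yields convergence of the sums above: when $\mathrm{card}\, A < \infty$ the sums are finite and pass to the limit termwise, while when $\mu_n \leq \mu_0$ we have $w_{n''}(\alpha + \gamma) \leq w_0(\alpha + \gamma)$ and the majorant $\alpha \mapsto w_0(\alpha + \gamma)$ has finite $K_\gamma$-integral equal to $\widetilde w_0(\gamma)$. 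The unit-modulus factors $e_y(\gamma)\,e_{y-x}(\alpha)$ appearing in (7.6) are absorbed by the same majorant. This produces $\widetilde\nu$-a.e.\ convergence along $(n'')$, contradicting the choice of $(n')$.

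Finally (7.7) follows from (7.5) and (7.6) by a formal extraction. From any subsequence I would extract a further subsequence along which both $\widetilde w_n \to \widetilde w_0$ and $\widetilde\phi_n^{(y)}\widetilde w_n \to \widetilde\phi_0^{(y)}\widetilde w_0$ hold $\widetilde\nu$-a.e.\ simultaneously, and then form the quotient $\widetilde\phi_n^{(y)} = (\widetilde\phi_n^{(y)}\widetilde w_n)/\widetilde w_n$ on the set $\{\widetilde w_0 > 0\}$, which carries full $\widetilde\mu_0$-measure. The quotient converges $\widetilde\mu_0$-a.e.\ to $\widetilde\phi_0^{(y)}$; since $\widetilde\mu_0$ is finite, a.e.\ convergence along every sub-subsequence amounts to convergence in $\widetilde\mu_0$-measure. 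The main obstacle, and the only point where the hypothesis ``$\mathrm{card}\, A < \infty$ or $\mu_n \leq \mu_0$'' is really used, is the interchange of limit and sum in the displayed identities: convergence of $w_n$ in $\nu$-measure is too weak to dominate the fibre sum on its own, and one of the two hypotheses is needed to supply either finiteness of the sum or an integrable majorant on each fibre.
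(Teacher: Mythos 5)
Your proof is correct and follows essentially the same route as the paper's: the same chain-rule identity $h_k^{(\alpha)}\widetilde w_k = w_k(\alpha+\cdot)\,\frac{d\nu^{(\alpha)}}{d\widetilde\nu}$ (the paper's (7.8)), the same use of the domination $w_n\le w_0$ to control the fibre sum over $A$ when $A$ is infinite, and the same quotient argument on $\{\widetilde w_0>0\}$ for (7.7). The only difference is cosmetic: you interchange limit and sum via the subsequence criterion plus dominated convergence on each fibre $(A,K_\gamma)$, where the paper uses a direct head/tail estimate in measure.
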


\begin{proof}
Relation (\ref{(e7.5)}) follows from (\ref{(e7.6)}) by (\ref{(e7.1)}) and (\ref{(e7.2)}). Also, (\ref{(e7.6)}) yields $\widetilde{\nu}$-$\lim\widetilde{\phi}_n^{(y)}\widetilde{w}_n  1_B = \widetilde{\phi}_0^{(y)}\widetilde{w}_0 1_B$, where $B:= \{ \gamma \in T : \widetilde{w}_0(\gamma) > 0 \}$. Since $\mu_0(T \setminus B)=0$ and $\widetilde{\mu}_0$ and $\widetilde{\nu}$ are equivalent on $B$, (\ref{(e7.7)}) follows, cf.~\cite[Section 23.1]{[9]}. So we have only to prove (\ref{(e7.6)}). First note, that 
\begin{equation}   \label{(e7.8)}
   h_k^{(\alpha)} \widetilde{w}_k = w_k^{(\alpha)} h_\nu^{(\alpha)},\, k \in {\mathbb N}_0 \, ,
\end{equation}
since $\frac{d\mu_k^{(\alpha)}}{d\widetilde{\mu}_k} \cdot \frac{d\widetilde{\mu}_k}{d\widetilde{\nu}} = \frac{d\mu_k^{(\alpha)}}{d\widetilde{\nu}} = \frac{d\mu_k^{(\alpha)}}{d\nu^{(\alpha)}} \cdot \frac{d\nu^{(\alpha)}}{d\widetilde{\nu}}$ . It is easy to see that the condition m-$\lim \mu_n =\mu_0$ and relation (\ref{(e7.8)}) imply that 
\begin{equation}   \label{(e7.9)}
   \widetilde{\nu}-\lim h_n^{(\alpha)} \widetilde{w}_n = h_0^{(\alpha)} \widetilde{w}_0,\, \alpha \in A \, ,
\end{equation}
which yields (\ref{(e7.6)}) if ${\rm card}A < \infty$. Finally, let $A:= \{ \alpha_j : j \in {\mathbb N} \}$ be an infinite set and $\mu_n \leq \mu_0$, $n \in \mathbb N$. We obtain
\begin{equation}   \label{(e7.10)}
   h_n^{(\alpha)} \widetilde{w}_n \leq h_0^{(\alpha)} \widetilde{w}_0 \,\, , n \in {\mathbb N} \, , \alpha \in A \, .
\end{equation}
Since by (\ref{(e7.1)}) the series $\sum_{j=1}^\infty h_0^{(\alpha_j)} \widetilde{w}_0$ converges $\widetilde{\nu}$-a.e., for $\varepsilon \in (0,\infty)$ there exists a number $m\in \mathbb N$ satisfying $\widetilde{\nu} \big( \sum_{j=m+1}^\infty h_0^{(\alpha_j)}\widetilde{w}_0 > \frac{\varepsilon}{2} \big) < \frac{\varepsilon}{2}$. Choose $n_0 \in {\mathbb N}$ such that $\widetilde{\nu} \big(\big| \sum_{j=1}^m h_0^{(\alpha_j)} \widetilde{w}_0 - \sum_{j=1}^m h_n^{(\alpha_j)} \widetilde{w}_n \big| > \frac{\varepsilon}{2} \big) < \frac{\varepsilon}{2}$ for all $n \geq n_0$ and obtain 
\begin{eqnarray*}
    \widetilde{\nu} \left( \left| \widetilde{\phi}_0^{(y)} \widetilde{w}_0 - \widetilde{\phi}_n^{(y)} \widetilde{w}_n \right| > \varepsilon \right)  
& = &  \widetilde{\nu} \bigg( \bigg| \sum_{j=1}^\infty e_{y-x}(\alpha_j)h_0^{(\alpha_j)} \widetilde{w}_0 - \sum_{j=1}^\infty e_{y-x}(\alpha_j) h_n^{(\alpha_j)} \widetilde{w}_n \bigg| > \varepsilon \bigg) \\
& \leq & \widetilde{\nu} \bigg( \sum_{j=1}^\infty h_0^{(\alpha_j)}\widetilde{w}_0 - \sum_{j=1}^\infty h_n^{(\alpha_j)} \widetilde{w}_n > \varepsilon \bigg) \\
& \leq & \widetilde{\nu} \bigg( \sum_{j=1}^m h_0^{(\alpha_j)}\widetilde{w}_0 - \sum_{j=1}^m h_n^{(\alpha_j)} \widetilde{w}_n > \frac{\varepsilon}{2} \bigg) \\
& \, & \quad + \, \, \widetilde{\nu} \bigg( \sum_{j=m+1}^\infty h_0^{(\alpha_j)}\widetilde{w}_0 - \sum_{j=m+1}^\infty h_n^{(\alpha_j)} \widetilde{w}_n > \frac{\varepsilon}{2} \bigg) \\
& < & \frac{\varepsilon}{2} + \widetilde{\nu} \bigg( \sum_{j=m+1}^\infty h_0^{(\alpha_j)} \widetilde{w}_0 > \frac{\varepsilon}{2} \bigg) \\
& < & \varepsilon \,\, ,
\end{eqnarray*}
for all $n \geq n_0$, by (\ref{(e7.2)}) and (\ref{(e7.10)}).
\end{proof}

\begin{theorem}   \label{t7.4}
(i) If $\rm m$-$\lim \mu_n= \mu_0$, then 
\begin{equation}  \label{(e7.11)}
   \lim\inf d_2(\mu_n ; x+H) \geq d_2(\mu_0 ; x+H) \, .
\end{equation}
(ii) If $\rm m$-$\lim \mu_n = \mu_0$ and ${\rm card}A=2$, then $\lim\inf d_p(\mu_n ; x+H) \geq d_p(\mu_0 ; x+H)$ for any $p \in (1,\infty)$.
\end{theorem}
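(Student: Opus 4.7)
The plan is to write $d_p(\mu_n;x+H)$ as an integral over $T$ of a pointwise-minimum functional, and then apply Fatou's lemma after passing to a subsequence on which the Radon--Nikod\'ym data converge almost everywhere. I will fix a dominating $\nu \in \mathcal M$ with $\mu_k \ll \nu$ for all $k \in \mathbb N_0$, set $w_k := d\mu_k/d\nu$, and use the identity $h_k^{(\alpha)}\widetilde{w}_k = w_k^{(\alpha)} h_\nu^{(\alpha)}$ (relation (\ref{(e7.8)}) from the proof of Lemma~\ref{l7.3}) to introduce $a_k^{(\alpha)}(\gamma) := w_k^{(\alpha)}(\gamma) h_\nu^{(\alpha)}(\gamma)$ on $T$.

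Combining Lemma~\ref{l7.1} with a pointwise minimization in $z \in \mathbb C$ (legitimate for $p \in (1,\infty)$ by strict convexity and a standard measurable-selection argument, and essentially just Theorem~\ref{t7.2}(i) when $p=2$), I will rewrite
\begin{equation*}
  d_p(\mu_n ; x+H) \,=\, \int_T F_p\bigl(a_n^{(\cdot)}(\gamma)\bigr)\, d\widetilde{\nu}(\gamma), \qquad F_p(a) \,:=\, \inf_{z\in\mathbb C}\sum_{\alpha\in A}|1-e_x(\alpha)z|^p\, a^{(\alpha)},
\end{equation*}
with the convention $F_p(0):=0$. For $p=2$ this matches (\ref{(e7.3)}) after completing the square; for $\mathrm{card}\,A=2$ and $p\in(1,\infty)$ it recovers Theorem~\ref{t7.2}(iii). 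Using countability of $A$ together with $\rm m$-$\lim\mu_n = \mu_0$, a diagonal extraction will produce a subsequence along which $w_n^{(\alpha)}\to w_0^{(\alpha)}$ $\widetilde{\nu}$-a.e.\ on $T$ simultaneously for every $\alpha\in A$, so that $a_n^{(\alpha)}(\gamma)\to a_0^{(\alpha)}(\gamma)$ $\widetilde{\nu}$-a.e.\ for each $\alpha$.

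The core step is lower semi-continuity of $F_p$ under coordinatewise convergence of the $a^{(\alpha)}$. For part~(ii), $\mathrm{card}\,A=2$ makes $F_p$ an infimum of linear functions on $[0,\infty)^2$, hence concave; together with strict convexity and coercivity (in $z$) of $|1-z|^p a^{(0)}+|1-e_x(\alpha_*)z|^p a^{(\alpha_*)}$ whenever $a^{(0)}+a^{(\alpha_*)}>0$, this will give continuity of $F_p$ on $[0,\infty)^2$. Part~(i) is the main obstacle, because $A$ may be infinite: here my approach will exploit the explicit $L^2$-minimizer $z^*(a)=\sum_\alpha e_{-x}(\alpha) a^{(\alpha)}/\sum_\alpha a^{(\alpha)}$ together with the uniform bound $|z^*(a)|\leq 1$. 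Given $a_n^{(\alpha)}\to a_0^{(\alpha)}$ with $\sum_\alpha a_0^{(\alpha)}>0$, a further subsequence extraction will give $z^*(a_n)\to z^*$ in the closed unit disc, and Fatou's lemma for the counting measure on $A$ will then deliver
\begin{equation*}
  F_2(a_0) \,\leq\, \sum_\alpha|1-e_x(\alpha)z^*|^2 a_0^{(\alpha)} \,\leq\, \liminf_n \sum_\alpha|1-e_x(\alpha)z^*(a_n)|^2 a_n^{(\alpha)} \,=\, \liminf_n F_2(a_n),
\end{equation*}
with the case $\sum_\alpha a_0^{(\alpha)}=0$ being trivial.

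Finally, Fatou's lemma applied to the nonnegative integrands on $(T,\widetilde{\nu})$ will give $d_p(\mu_0;x+H) \leq \liminf_n d_p(\mu_n;x+H)$ along the extracted subsequence, and a standard subsequence-of-subsequence argument will promote the inequality to the full sequence. The reason the variational representation (rather than the direct expansion~(\ref{(e7.3)})) is indispensable in part~(i) is that the denominator $\widetilde{w}_n$ implicit in a term-by-term expansion may grow in the limit, causing a naive Fatou to fail there; the uniform bound $|z^*(a)|\leq 1$ is what circumvents this difficulty.
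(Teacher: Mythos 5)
Your argument is correct, and for part (i) it takes a genuinely different route from the paper's. The paper works directly with the closed form (\ref{(e7.3)}): it uses Lemma~\ref{l7.3} to get $\widetilde{\mu}_0$-$\lim\widetilde{\phi}_n=\widetilde{\phi}_0$ and then applies Fatou's lemma to $(1-|\widetilde{\phi}_n|^2)\widetilde{w}_n$. Since Lemma~\ref{l7.3} is only available when ${\rm card}A<\infty$ or $\mu_n\leq\mu_0$, the infinite-$A$ case is settled there by a monotonicity reduction: replace $\mu_n$ by $d\mu_n':=\min(w_0,w_n)\,d\nu\leq d\mu_0$ and use $d_2(\mu_n')\leq d_2(\mu_n)$. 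You avoid both Lemma~\ref{l7.3} and this reduction by pushing the minimization inside the integral over $T$ and proving pointwise lower semicontinuity of $F_2$ through the uniform bound $|z^*(a)|\leq 1$ (which is exactly the bound $|\widetilde{\phi}_\mu|\leq 1$ that the paper exploits elsewhere, e.g.\ in Theorem~\ref{t7.6}), combined with Fatou's lemma for the counting measure on $A$; the coordinatewise a.e.\ convergence of the $a_n^{(\alpha)}$ along a subsequence is indeed available from (\ref{(e7.9)}) and a diagonal extraction. The paper's route is shorter given the lemmas already in place; yours is more self-contained at the pointwise level and makes transparent why an infinite annihilator causes no trouble. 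Two details to nail down in a full write-up: first, the identity $d_p(\mu;x+H)=\int_T F_p(a^{(\cdot)})\,d\widetilde{\nu}$ needs a measurable minimizer lying in $L^p(\widetilde{\mu})$ --- for $p=2$ this is $\widetilde{\phi}_\mu$ itself, and for ${\rm card}A=2$ it is the bounded $\widetilde{\psi}$ of Theorem~\ref{t7.2}(iii), so both cases you need are covered; second, in part (ii) the continuity-of-$F_p$-via-concavity claim is the one delicate spot, since a concave function need not be lower semicontinuous at boundary points of a general convex domain (it is on the polyhedron $[0,\infty)^2$, but that is a nontrivial fact), yet only lower semicontinuity is required and the same bounded-minimizer/Fatou argument as in (i), using $|\widetilde{\psi}|\leq 1$, supplies it without any appeal to concavity.
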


\begin{proof}
(i) Let $\mu_n \leq \mu_0$, $n \in\mathbb N$. Choosing $y=0$ in (\ref{(e7.7)}) we get $\widetilde{\mu}$-$\lim \, (1-|\widetilde{\phi}_n|^2) = (1-|\widetilde{\phi}_0|^2)$ by properties of the convergence in measure, cf.~\cite[Thm.~2.3.3]{[20]}. We get $\widetilde{\nu}$-$\lim \, (1-|\widetilde{\phi}_n|^2) \widetilde{w}_n =(1-|\widetilde{\phi}_0|^2) \widetilde{w}_0$, and hence, 
\begin{eqnarray*}
\lim \inf d_2(\mu_n) & = & \lim\inf \int_T (1-|\widetilde{\phi}_n|^2) \, d\widetilde{\mu}_n \\
        & = &  \lim\inf \int_T (1-|\widetilde{\phi}_n|^2) \widetilde{w}_n \, d\widetilde{\nu} \\
        & \geq &  \int_T (1-|\widetilde{\phi}_0|^2) \widetilde{w}_0 \, d\widetilde{\nu} \\
        & = &  d_2(\mu_0)
\end{eqnarray*}
by (\ref{(e7.3)}) and Fatou's lemma. If the inequality $\mu_n \leq \mu_0$ is not satisfied, define $d\mu_n' := \min (w_0,w_n) \, d\nu$ and obtain $\lim \inf d_2(\mu_n) \geq \lim \inf d_2(\mu_n') \geq d_2(\mu_0)$ by the result just proved. 

(ii) Since elementary inequalities and (\ref{(e7.1)}) yield $\big( ( h_n^{(0)})^\frac{1}{p-1} + (h_n^{(\alpha)})^\frac{1}{p-1} \big)h_n^{(0)} h_n^{(\alpha)} \leq 2^\frac{p-2}{p-1}$, the assertion follows from Theorem \ref{t7.2}(iii) and another application of Fatou's lemma.
\end{proof}

The following example shows that even in the simplest non-trivial case ${\rm card}A=2$ inequality (\ref{(e7.11)}) can be sharp, and that (\ref{(e7.11)}) can fail if convergence in measure is replaced by weak convergence.

\begin{example}   \label{e7.5}
Let $G=\mathbb Z$, $H=2\mathbb Z$, $x=1$. The annihilator $A$ of $H$ is the subgroup $\{0,\pi\}$ of $\Gamma = [0,2\pi)$. Set $T=[0,\pi)$. 

(a) Define $\nu :=\lambda$, $d\mu_n := w_n \, d\lambda$, $w_n:=n  \big( 1_{[0,\frac{1}{n})} + 1_{[\pi,\pi+\frac{1}{n})} \big) + 1_{[\frac{1}{n},\pi)}$, $n \in \mathbb N$. Obviously, m-$\lim \mu_n = \widetilde{\lambda} =: \mu_0$, where $\widetilde{\lambda}$ denotes the restriction of $\lambda$ to $[0,\pi)$. Moreover, $\widetilde{w}_n =2n \cdot 1_{[0,\frac{1}{n})} + 1_{[\frac{1}{n},\pi)}$, $h_n^{(0)} = \frac{1}{2} \cdot 1_{[0,\frac{1}{n})}+ 1_{[\frac{1}{n},\pi)}$, $h_n^{(\pi)} = \frac{1}{2} \cdot 1_{[0,\frac{1}{n})}$. So, $d_2(\mu) = 4 \cdot \int_T h_n^{(0)}h_n^{(\pi)} \, d\widetilde{\mu}_n = \frac{4}{2\pi} \cdot \int_0^\frac{1}{n} \frac{1}{4} \cdot 2n \, d\gamma = \frac{1}{\pi}$ and $d_2(\mu_0)=0$.

(b) Define $\nu :=\lambda$, $\mu_n(d\gamma) := (1+\cos ((2n+1)\gamma) \, \lambda(d\gamma)$, $\gamma \in [0,2\pi)$. We obtain  w-$\lim \mu_n =\lambda =: \mu_0$, $\widetilde{w}_n = 2 \widetilde{\lambda}$, $h_n^{(0)} (\gamma) = \frac{1}{2} (1+\cos ((2n+1)\gamma))$, $h_n^{(\pi)} (\gamma) = \frac{1}{2}(1-\cos ((2n+1)\gamma))$, $\gamma \in [0,\pi)$. Therefore, $d_2(\mu_n) = \frac{4}{2\pi} \int_0^\pi \frac{1}{4} (1+\cos ((2n+1)\gamma))(1-\cos ((2n+1)\gamma)) \, d\gamma = \frac{1}{4}$ and $d_2(\mu_0)=1$.
\end{example}

\begin{theorem}    \label{t7.6}
 Let $\rm m$-$\lim \mu_n =\mu_0$ and $\mu_0 << \mu_n$, $n \in\mathbb N$. 

(i) If ${\rm card}A < \infty$, then (R3) is satisfied for $S=x+H$ and $p=2$.

(ii) If ${\rm card}A=2$, then (R3) is satisfied for $S=x+H$ and $p \in (1,\infty)$.
\end{theorem}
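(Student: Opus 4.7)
The plan is to reduce both parts to a convergence statement on the transversal $T$ via the isometric isomorphism $V_x$ from Lemma~\ref{l7.1}, and then invoke Lemma~\ref{l7.3} together with bounded convergence.

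For part (i), Theorem~\ref{t7.2}(i) gives $\phi_{k,2} = V_x \widetilde{\phi}_k$, so by the linearity of $V_x$ and the isometry between $L^2(\widetilde{\mu}_0)$ and $[\mathcal{T}(x+H)]_{\mu_0,2}$,
\[
   \int |\phi_{0,2} - \phi_{n,2}|^2 \, d\mu_0 = \int_T |\widetilde{\phi}_0 - \widetilde{\phi}_n|^2 \, d\widetilde{\mu}_0 .
\]
The normalization (\ref{(e7.1)}) together with $\mu_0 << \mu_n$ (which transfers to $\widetilde{\mu}_0 << \widetilde{\mu}_n$ by $\sigma$-additivity) gives the pointwise bound $|\widetilde{\phi}_n| \leq \sum_\alpha h_n^{(\alpha)} = 1$ both $\widetilde{\mu}_n$-a.e.~and $\widetilde{\mu}_0$-a.e. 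Lemma~\ref{l7.3} applied with $y=0$ (permitted since ${\rm card}\,A < \infty$) yields $\widetilde{\mu}_0$-$\lim \widetilde{\phi}_n = \widetilde{\phi}_0$. Because $\widetilde{\mu}_0$ is finite and the integrands $|\widetilde{\phi}_0 - \widetilde{\phi}_n|^2$ are uniformly bounded by $4$, the standard subsequence passage to $\widetilde{\mu}_0$-a.e.~convergence combined with Lebesgue's dominated convergence theorem forces the right-hand integral to zero.

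For part (ii) write $A = \{0,\alpha\}$. Since $2\alpha = 0$, we have $e_x(\alpha)^2 = e_x(0) = 1$, hence $e_x(\alpha) = \pm 1$; the possibility $e_x(\alpha) = 1$ would force $x \in H$, contradicting $x \in G \setminus H$, so $e_x(\alpha) = -1$. Theorem~\ref{t7.2}(iii) identifies the restriction of $\phi_{k,p}$ to $T$ with the quotient $\widetilde{\psi}_k$ of (\ref{(e7.4)}), and Lemma~\ref{l7.1} together with the isometry reduces the task to establishing $\int_T |\widetilde{\psi}_0 - \widetilde{\psi}_n|^p \, d\widetilde{\mu}_0 \to 0$. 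The crucial observation is that ${\rm card}\,A = 2$ makes the linear constraint $h_k^{(0)} + h_k^{(\alpha)} = 1$ couple directly with $\widetilde{\phi}_k$: indeed $\widetilde{\phi}_k = h_k^{(0)} - h_k^{(\alpha)}$, so $h_k^{(0)} = (1+\widetilde{\phi}_k)/2$ and $h_k^{(\alpha)} = (1-\widetilde{\phi}_k)/2$ $\widetilde{\mu}_0$-a.e. Hence Lemma~\ref{l7.3} delivers $\widetilde{\mu}_0$-$\lim h_n^{(\beta)} = h_0^{(\beta)}$ for both $\beta \in A$. The function $(s,t) \mapsto (s^{1/(p-1)} - t^{1/(p-1)})/(s^{1/(p-1)} + t^{1/(p-1)})$ is continuous on the simplex $\{(s,t) : s,t \geq 0,\, s+t=1\}$ (the denominator vanishes only at the origin, which is excluded), so $\widetilde{\mu}_0$-$\lim \widetilde{\psi}_n = \widetilde{\psi}_0$. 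Finally, $|\widetilde{\psi}_n| \leq 1$ together with finiteness of $\widetilde{\mu}_0$ permits the subsequence-plus-dominated-convergence argument to conclude.

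The main obstacle is in part (ii): the nonlinear dependence of $\widetilde{\psi}_k$ on the $h^{(\alpha)}$ for $p \neq 2$ precludes any direct appeal to Lemma~\ref{l7.3}, and one must first upgrade the convergence of $\widetilde{\phi}_n$ into convergence of the individual Radon-Nikod\'ym derivatives $h_n^{(\alpha)}$. The restriction ${\rm card}\,A=2$ makes this upgrade trivial by the linear inversion above; for larger $A$ the simplex is higher-dimensional and the single scalar $\widetilde{\phi}_k$ no longer determines the vector $(h_k^{(\alpha)})_{\alpha \in A}$, which explains the limitation in the statement.
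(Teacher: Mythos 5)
Your argument is correct and follows essentially the same route as the paper: transfer to the transversal $T$ via $V_x$ and Lemma~\ref{l7.1}/Theorem~\ref{t7.2}, obtain $\widetilde{\mu}_0$-convergence in measure of $\widetilde{\phi}_n$ (resp.\ $\widetilde{\psi}_n$) from Lemma~\ref{l7.3}, and conclude with the uniform bound by $1$ and dominated convergence along subsequences. Your explicit inversion $h_k^{(0)}=(1+\widetilde{\phi}_k)/2$, $h_k^{(\alpha)}=(1-\widetilde{\phi}_k)/2$ in part (ii) supplies the detail the paper leaves implicit when it asserts $\widetilde{\mu}_0$-$\lim\widetilde{\psi}_n=\widetilde{\psi}_0$, and correctly explains why the argument is confined to ${\rm card}\,A=2$.
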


\begin{proof}
(i) Since $|\widetilde{\phi}_k| \leq 1$ $\, \widetilde{\mu}_0$-a.e., from (\ref{(e7.7)}), Lemma \ref{l7.1} and Theorem \ref{t7.2}(i) we derive $\lim \int |\phi_{0,2}-\phi_{n,2} |^2 \, d\mu = \lim \int_T | \widetilde{\phi}_0- \widetilde{\phi}_n |^2\, d\widetilde{\mu} =0$ by Lebesgue's dominated convergence theorem.

(ii) For $\mu_k$ define a function $\widetilde{\psi}_k$ analogously to the function $\widetilde{\psi}$ in (\ref{(e7.4)}). Then $| \widetilde{\psi}_k | \leq 1$ and $\widetilde{\mu}_0$-$\lim \widetilde{\psi}_k = \widetilde{\psi}_0$. So, the proof can be completed similarly to the proof of (i).
\end{proof}

Our next example unveils that the condition ${\rm card}A < \infty$ cannot be omitted in theorem \ref{t7.6}(i).

\begin{example}  \label{e7.7}
Let $G=\mathbb R$, $H=\mathbb Z$ and $x=\frac{1}{2}$. The dual group $\Gamma$ of $\mathbb R$ can be identified with $\mathbb R$ setting $e_y(\gamma) := {\rm e}^{iy\gamma}$, $y,\gamma \in \mathbb R$. The annihilator $A$ of $\mathbb Z$ is the group $2\pi\mathbb Z$, and we select $T=[0,2\pi)$. Set $\nu=\delta_0 + \sum_{j=1}^\infty j^{-2}\delta_{(2j+1)2\pi}$, $\mu_n=\delta_0+\delta_{(2n+1)2\pi}$, $n \in \mathbb N$. Then m-$\lim \mu_n = \delta_0 = :  \mu_0 = \widetilde{\mu}_0$, $\widetilde{\mu}_n = 2\delta_0$, and it is enough to study the behaviour of $\widetilde{\phi}_k(0)$. We have $\widetilde{\phi}_0(0) = {\rm e}^{-i\pi 0} h^{(0)} = 1$ and $\widetilde{\phi}_n(0) = {\rm e}^{-i\pi 0} h_k^{(0)} + \exp (-\frac{i}{2}(2n+1)2\pi) h_n^{((2n+1)2\pi)} (0) = \frac{1}{2}-\frac{1}{2} =0$, and hence, $\int_\Gamma |\phi_{0,2} - \phi_{n,2} |^2 \, d\mu = \int_T | \widetilde{\phi}_0 - \widetilde{\phi}_n|^2 \, d\widetilde{\mu}_0 = 1$, $n \in \mathbb N$.
\end{example}

\begin{theorem}   \label{t7.8}
Let $\lim \| \mu_0 - \mu_n \| = 0.$
\begin{itemize}
\item[(i)] The relation (R1) is satisfied for $S=x+H$ and $p=2$.
\item[(ii)]  If ${\rm card}A = 2$ the relation (R1) is satisfied for $S=x+H$ and $p \in (1,\infty)$.
\item[(iii)] The relation (R3) is satisfied for $S=x+H$ and $p=2$. 
\item[(iv)]  If ${\rm card}A =2$ the relation (R3) is satisfied for $S=x+H$ and $p \in (1,\infty)$.
\end{itemize}
\end{theorem}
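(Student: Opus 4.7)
My plan is to prove (iii) first, then (iv), and to derive (i) and (ii) from them by applying Corollary \ref{c3.12}, which upgrades (R3) to (R1) under norm convergence. Both (iii) and (iv) rest on the explicit formulas for the metric projection supplied by Theorem \ref{t7.2} on the transversal $T$. For (iii) this is $\phi_{k,2}=V_x\widetilde{\phi}_k$ from Theorem \ref{t7.2}(i); for (iv) it is the rational expression $\widetilde{\psi}_k$ from Theorem \ref{t7.2}(iii). In both cases, these formulas combined with identity (\ref{(e7.1)}) force $|\phi_{k,p}|\le 1$ uniformly in $k$, which is what will allow me to convert measure-convergence into $L^p$-convergence via bounded convergence (Lemma \ref{l2.9}). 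The isometric isomorphism $V_x$ from Lemma \ref{l7.1} will reduce the $L^p(\mu_0)$-estimate to an estimate on $T$ against $\widetilde{\mu}_0$.

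The central computation will be an $L^1(\widetilde{\nu})$-convergence estimate. Using (\ref{(e7.8)}) one has $h_n^{(\alpha)}\widetilde{w}_n=w_n^{(\alpha)}h_\nu^{(\alpha)}$, so $\|h_n^{(\alpha)}\widetilde{w}_n-h_0^{(\alpha)}\widetilde{w}_0\|_{L^1(\widetilde{\nu})}=\|\mu_n^{(\alpha)}-\mu_0^{(\alpha)}\|$. Summing against the unimodular coefficients $e_{-x}(\alpha)$ from (\ref{(e7.2)}) and interchanging sum and integral via monotone convergence (to handle possibly infinite $A$), one obtains
\[
\|\widetilde{\phi}_n\widetilde{w}_n-\widetilde{\phi}_0\widetilde{w}_0\|_{L^1(\widetilde{\nu})}\le\sum_\alpha\|\mu_n^{(\alpha)}-\mu_0^{(\alpha)}\|=\|\mu_n-\mu_0\|,
\]
where the last equality reflects that $\{\alpha+T:\alpha\in A\}$ partitions $\Gamma$. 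The same estimate with $\widetilde{\phi}$ replaced by $1$ gives $\|\widetilde{w}_n-\widetilde{w}_0\|_{L^1(\widetilde{\nu})}\to 0$.

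For (iii), I would then combine these two $L^1$-convergences with $|\widetilde{\phi}_n|\le 1$ in a short triangle-inequality argument to deduce $L^1(\widetilde{\mu}_0)$-convergence, and hence $\widetilde{\mu}_0$-measure convergence, of $\widetilde{\phi}_n$ to $\widetilde{\phi}_0$. Boundedness upgrades this to $L^2(\widetilde{\mu}_0)$-convergence via bounded convergence applied along a.e.-convergent subsequences, and (R3) follows from the $V_x$-isometry. For (iv), I would first extract $\widetilde{\mu}_0$-measure convergence of each $h_n^{(\alpha)}$ to $h_0^{(\alpha)}$ by dividing by $\widetilde{w}_n$ on the set $\{\widetilde{w}_0>0\}$, justified by the standing assumption $\mu_0\ll\mu_n$ which guarantees $\widetilde{w}_n>0$ $\widetilde{\mu}_0$-a.e.; passing to subsequences converging a.e.\ makes the division rigorous. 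Continuity of the rational function in (\ref{(e7.4)}) on $\{h^{(0)}+h^{(\alpha)}>0\}$---a set that contains the support of $\widetilde{\mu}_0$ by (\ref{(e7.1)})---yields $\widetilde{\mu}_0$-measure convergence of $\widetilde{\psi}_n$, and bounded convergence closes the argument.

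The chief technical obstacle, I expect, is the division by $\widetilde{w}_n$ needed in part (iv). This is avoided in (iii) because the additive formula (\ref{(e7.2)}) lets one work directly with the products $h_n^{(\alpha)}\widetilde{w}_n$, on which norm convergence acts favorably via (\ref{(e7.8)}). The restriction to $\mathrm{card}\,A=2$ in (ii) and (iv) appears essentially unavoidable with this approach, since Theorem \ref{t7.2}(iii) supplies no explicit formula for $\phi_{\mu,p}$ beyond the two-element case.
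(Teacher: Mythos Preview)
Your argument for (iii) is correct and is in substance the same as the paper's: both compute $\int_T|\widetilde{\phi}_n\widetilde{w}_n-\widetilde{\phi}_0\widetilde{w}_0|\,d\widetilde{\nu}$ and $\int_T|\widetilde{w}_n-\widetilde{w}_0|\,d\widetilde{\nu}$ from $\|\mu_n-\mu_0\|$ via (\ref{(e7.8)}), use $|\widetilde{\phi}_n|\le 1$ in a triangle-inequality step, and pull back through $V_x$. Your version is actually tidier: summing $\sum_\alpha\|\mu_n^{(\alpha)}-\mu_0^{(\alpha)}\|=\|\mu_n-\mu_0\|$ directly, via Tonelli, replaces the paper's explicit $\varepsilon$-tail splitting in (\ref{(e7.12)})--(\ref{(e7.14)}). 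For (iv) your plan works, but it is more than necessary: since $\lim\|\mu_0-\mu_n\|=0$ implies ${\rm m}\text{-}\lim\mu_n=\mu_0$ and ${\rm card}\,A=2<\infty$, Theorem \ref{t7.6}(ii) already gives (iv) outright, which is exactly how the paper disposes of it.

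There is, however, a genuine gap in your derivation of (i) and (ii). You want to obtain (R1) from (R3) via Corollary \ref{c3.12}, but (R3) carries the standing hypothesis $\mu_0\ll\mu_n$ (this is stated explicitly where (R2) and (R3) are introduced), whereas parts (i) and (ii) assert (R1) under norm convergence alone, with no absolute-continuity assumption. Your route therefore proves (i) and (ii) only under the extra hypothesis $\mu_0\ll\mu_n$. The paper sidesteps this by not going through (R3) at all: it cites Theorem \ref{t7.4}, which gives $\liminf d_p(\mu_n;x+H)\ge d_p(\mu_0;x+H)$ from ${\rm m}\text{-}\lim\mu_n=\mu_0$ alone, and combines this with Theorem \ref{t3.1} (applicable since norm convergence implies ${\rm w}^*$-convergence) to conclude (R1). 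You should replace your (R3)$\Rightarrow$(R1) step for (i) and (ii) by this direct appeal to Theorem \ref{t7.4}.
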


\begin{proof}
Since item (iv) is merely a weaker assertion as Theorem \ref{t7.6}(ii) and since (i)  and (ii) are immediate consequences of Theorem \ref{t7.4}(i) and (ii), resp., we have only to prove assertion (iii). According to Theorem \ref{t7.6}(i) it is enough to consider the case in which $A= \{ \alpha_j : j \in {\mathbb N} \}$ is an infinite set. For $\varepsilon \in (0,\infty)$ there exists a number $m \in \mathbb N$ such that 
\begin{equation}   \label{(e7.12)}
   \int_T \big| \sum_{j > m} e_x(-\alpha_j) h_0^{(\alpha_j)} \widetilde{w}_0 \big| \, d\widetilde{\nu} = 
\int_T \sum_{j >m} h_0^{(\alpha_j)} \widetilde{w}_0 \, d\widetilde{\nu} = \mu_0 \bigg( \cup_{j > m} (\alpha_j+T) \bigg) < \varepsilon \, .
\end{equation}
Since $\lim \mu_n \bigg( \cup_{j > m} (\alpha_j+T) \bigg) = \mu_0 \bigg( \cup_{j > m} (\alpha_j+T) \bigg)$ there exists a number $n_1 \in \mathbb N$ such that
\begin{equation}   \label{(e7.13)}
    \int_T \big| \sum_{j > m} e_x(-\alpha_j) h_n^{(\alpha_j)} \widetilde{w}_n \big| \, d\widetilde{\nu} \leq 
\int_T \sum_{j >m} h_n^{(\alpha_j)} \widetilde{w}_n \, d\widetilde{\nu} = 
\mu_n \bigg( \cup_{j > m} (\alpha_j+T) \bigg) < 2\varepsilon
\end{equation}
for all $n \geq n_1$. From $\lim \|\mu_0-\mu_n\| =0$ we can conclude that $\lim \int_T |h_0^{(\alpha)} \widetilde{w}_0 - h_n^{(\alpha)} \widetilde{w}_n | \, d\widetilde{\nu} =0$ for all $\alpha \in A$. Consequently, there exists a number $n_2 \in \mathbb N$ such that
\begin{equation}   \label{(e7.14)}
   \int_T \big|\sum_{j=1}^m e_x(-\alpha_j) h_0^{(\alpha_j)} \widetilde{w}_0 - \sum_{j=1}^m e_x(-\alpha_j) h_n^{(\alpha_j)} \widetilde{w}_n \big| \, d\widetilde{\nu} \leq 
\int_T \sum_{j=1}^m \big| h_o^{(\alpha_j)} \widetilde{w}_0 -h_n^{(\alpha_j)} \widetilde{w}_n \big| \, d\widetilde{\nu} < \varepsilon
\end{equation}
for all $n \geq n_2$. Taking into account (\ref{(e7.1)}),   (\ref{(e7.2)}),  (\ref{(e7.12)}),  (\ref{(e7.13)}) and  (\ref{(e7.14)}), we obtain 
\begin{equation}   \label{(e7.15)}
   \int_T | \widetilde{\phi}_0 \widetilde{w}_0 - \widetilde{\phi}_n \widetilde{w}_n | \, d\widetilde{\nu} < 4 \varepsilon
\end{equation}
and
\begin{equation}   \label{(e7.16)}
   \int_T | \widetilde{w}_0 -\widetilde{w}_n | \, d\widetilde{\nu} = \int_T \big| \sum_\alpha h_0^{(\alpha)} \widetilde{w}_0 -  \sum_\alpha h_n^{(\alpha)} \widetilde{w}_n \big| \, d\widetilde{\nu} < 4 \varepsilon
\end{equation}
for all $n \geq n_0 := \max(n_1,n_2)$. Now, Lemma \ref{l7.1}, (\ref{(e7.15)}) and (\ref{(e7.16)}) imply that
\begin{eqnarray*}
   \int_\Gamma | \phi_{0,2}-\phi_{n,2} |^2 \, d\mu_0 & = & \int_T |\widetilde{\phi}_0 - \widetilde{\phi}_n |^2 \, d\widetilde{\mu}_0 \\
   & \leq & \int_T | \widetilde{\phi}_0 - \widetilde{\phi}_n | ( |\widetilde{\phi}_0| + |\widetilde{\phi}_n|) \, d\widetilde{\mu}_0 \\
   & \leq & 2 \cdot \int_T | \widetilde{\phi}_0 - \widetilde{\phi}_n | d\widetilde{\mu}_0 \\
   & = & 2 \cdot \int_T | \widetilde{\phi}_0 \widetilde{w}_0 - \widetilde{\phi}_n \widetilde{w}_0 | \, d\widetilde{\nu} \\
   & = & 2 \cdot \big( \int_T | \widetilde{\phi}_0 \widetilde{w}_0 - \widetilde{\phi}_n \widetilde{w}_n | \, d\widetilde{\nu}  +
                    \int_T | \widetilde{\phi}_n \widetilde{w}_n - \widetilde{\phi}_n \widetilde{w}_0 | \, d\widetilde{\nu} \big) \\
   & < & 2 \cdot (4 \varepsilon + \int_T | \widetilde{\phi}_n | |\widetilde{w}_0 - \widetilde{w}_n | \, d\widetilde{\nu}) \\
   & < & 2 \cdot (4 \varepsilon +4 \varepsilon)\\
   & = &  16 \varepsilon
\end{eqnarray*}
for all $n \geq n_0$.
\end{proof}

\begin{corollary}   \label{c7.9}
Let $S_k=\{ y_1, ... ,y_k\}$ be a finite subset of $G$ such that any trigonometric polynomial of the form $\sum_{j=1}^k a_j e_{y_j}$, $a_j \in \mathbb C$, $\sum_{j=1}^k |a_j|^2 > 0$, does not belong to $[{\mathcal T}(x+H)]_{\mu_0,2}$. If $\lim \| \mu_0-\mu_n \| =0$, then the relations (R1) and (R3) are satisfied for $S=(x+H) \cup S_k$ and $p=2$. 
\end{corollary}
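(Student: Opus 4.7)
The plan is to reduce Corollary \ref{c7.9} to Proposition \ref{p3.14} and Corollary \ref{c3.12} by an induction in which the characters $e_{y_1}, \ldots, e_{y_k}$ are adjoined one at a time to the base observation set $x+H$.

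First, I would verify the hypotheses of Proposition \ref{p3.14} for $S=x+H$. Let $P_n e_y$ denote the orthogonal projection of $e_y$ onto $[{\mathcal T}(x+H)]_{\mu_n,2}$. By Theorem \ref{t7.2}(i) one has $P_n e_y = V_x \widetilde{\phi}^{(y)}_{\mu_n}$, and (\ref{(e7.1)}) together with (\ref{(e7.2)}) gives $|\widetilde{\phi}^{(y)}_{\mu_n}| \leq 1$ $\widetilde{\mu}_n$-a.e., so $\esssup_{\mu_n} |P_n e_y| \leq 1$, which is condition (i). Condition (ii) is obtained by a direct adaptation of the proof of Theorem \ref{t7.8}(iii): replacing $\widetilde{\phi}_{\mu_k}$ with $\widetilde{\phi}^{(y)}_{\mu_k}$ inserts only an extra unimodular factor $e_{y-x}(\alpha)$ in each summand and a global factor $e_y$, so the pointwise bound $|\widetilde{\phi}^{(y)}| \leq 1$ persists and the $L^1$-chain (\ref{(e7.12)})--(\ref{(e7.16)}) goes through unchanged.

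Next, I would apply Proposition \ref{p3.14} inductively. Set $T_0 := x+H$ and $T_i := (x+H) \cup \{y_1, \ldots, y_i\}$ for $i=1,\ldots,k$. At step $i$ the hypothesis to be checked is $e_{y_{i+1}} \notin [{\mathcal T}(T_i)]_{\mu_0,2}$. Since ${\rm span}\{e_{y_1}, \ldots, e_{y_i}\}$ is finite-dimensional, the closure $[{\mathcal T}(T_i)]_{\mu_0,2}$ coincides with the algebraic sum $[{\mathcal T}(x+H)]_{\mu_0,2} + {\rm span}\{e_{y_1}, \ldots, e_{y_i}\}$. If $e_{y_{i+1}}$ belonged to this sum, subtracting a suitable linear combination of $e_{y_1}, \ldots, e_{y_i}$ would yield a non-trivial combination $\sum_{j=1}^{i+1} a_j e_{y_j}$ (with $a_{i+1}=1$) lying in $[{\mathcal T}(x+H)]_{\mu_0,2}$, contradicting the standing hypothesis of the corollary. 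Each step is thus legitimate, and after $k$ applications the orthogonal projection of $e_y$ onto $[{\mathcal T}((x+H) \cup S_k)]_{\mu_n,2}$ satisfies (i) and (ii) for every $y \in G$.

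Specialising (ii) to $y=0$ gives relation (R3) for $S=(x+H) \cup S_k$ and $p=2$, and an appeal to Corollary \ref{c3.12} then delivers (R1). The main obstacle, though technically routine, is the extension of the Theorem \ref{t7.8}(iii) argument from the case $y=0$ treated there to arbitrary $y \in G$; everything else reduces to bookkeeping with finite-dimensional sums and an invocation of Proposition \ref{p3.14}.
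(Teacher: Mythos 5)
Your proof is correct and follows essentially the same route as the paper, which simply cites Proposition \ref{p3.14}, Theorem \ref{t7.2}(i) and Theorem \ref{t7.8}(iii) for (R3) and Corollary \ref{c3.12} for (R1); you have merely made explicit the two points the paper leaves implicit, namely the extension of the Theorem \ref{t7.8}(iii) argument from $e_0$ to arbitrary $e_y$ (using $|\widetilde{\phi}^{(y)}_{\mu_k}|\leq 1$ from (\ref{(e7.1)})--(\ref{(e7.2)})) and the inductive verification, via the closedness of a closed subspace plus a finite-dimensional one, that each $e_{y_{i+1}}$ lies outside $[{\mathcal T}(T_i)]_{\mu_0,2}$ so that Proposition \ref{p3.14} may be iterated. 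Both of these elaborations are sound.
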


\begin{proof}
Proposition \ref{p3.14}, Theorem \ref{t7.2}(i) and Theorem \ref{t7.8}(iii) yield (R3). Relation (R1) then follows from Corollary \ref{c3.12}.
\end{proof}


\end{document}